\newcommand{\R}{\mathbb R}
\newcommand{\N}{\mathbb N}
\newcommand{\Z}{\mathbb Z}
\begin{document}
\date{\today}



\newtheorem*{theorema}{Theorem A}
\newtheorem*{theoremb}{Theorem B}
\newtheorem*{theoremc}{Theorem C}
\newtheorem*{theoremd}{Theorem D}

\newtheorem*{theorem}{Theorem 1}
\newtheorem{definition}{Definition}[section]
\newtheorem{prop}[definition]{Proposition}
\newtheorem{lemma}[definition]{Lemma}
\newtheorem{sublemma}[definition]{Sublemma}
\newtheorem{remark}{Remark}[section]
\newtheorem{cor}[definition]{Corollary}
\newtheorem{claim}[definition]{Claim}


\author{Samuel Senti and Hiroki Takahasi}
\address{Instituto de Matematica, Universidade Federal do Rio
de Janeiro, C.P. 68 530, CEP 21945-970, R.J., BRASIL}
\email{senti@im.ufrj.br}
\address{Department of Mathematics, Keio University, 
Yokohama 223-8522, JAPAN}
 \email{hiroki@math.keio.ac.jp}
\thanks{}
\subjclass[2010]{37D25, 37D35, 37D45}
\title[Equilibrium measures for the H\'enon map at the first bifurcation] {Equilibrium measures for the H\'enon map at the first bifurcation: uniqueness and geometric/statistical properties}
\begin{abstract}
For strongly dissipative H\'enon maps at the
first bifurcation parameter where the uniform hyperbolicity is destroyed by the formation of tangencies inside the limit set, we establish a thermodynamic formalism, i.e., prove the existence and uniqueness of an invariant probability measure which minimizes the free energy associated with a non continuous geometric potential $-t\log J^u$, where $t\in\mathbb R$ is in a certain large interval and $J^u$ denotes the Jacobian in the unstable direction. We obtain geometric and statistical properties of these
measures.

\end{abstract}
\maketitle

\section{Introduction}
It is a well-known fact that unfoldings of non-transverse intersections between stable and unstable manifolds unleash surprisingly rich arrays of complicated behaviors (see, e.g., \cite{PalTak93} and the references therein).
Advancing our knowledge of such complexities is essential for understanding the realm of dynamics beyond uniform hyperbolicity. 

In dimension two, an important role is played by the H\'enon family
\begin{equation}\label{henon}
f_{a}\colon(x,y)\mapsto (1-ax^2+\sqrt{b}y, \pm\sqrt{b}x),\ \
0<b\ll1.\end{equation} 
Indeed, a perturbation of this family is embedded in generic unfoldings of quadratic homoclinic tangencies associated with dissipative saddles of surface diffeomorphisms \cite{MorVia93,PalTak93}. Hence, a thorough study of the H\'enon family should provide a general account on complexities unleashed by homoclinic tangencies in dimension two.

Another important feature of the H\'enon family is that it describes a transition from Smale's horseshoe to the strange attractors of Benedicks $\&$ Carleson \cite{BenCar91}.
For sufficiently large $a$, the non-wandering set of $f_a$ is a uniformly hyperbolic horseshoe \cite{DevNit79}. 
As $a$ decreases, the stable and unstable directions get increasingly confused, until one reaches the {\it first bifurcation parameter} $a^*$ near $2$.
At $a=a^*$ the horseshoe undergoes a homoclinic (or heteroclinic) bifurcation, i.e., $\{f_a\}$ generically unfolds a quadratic tangency at $a=a^*$ between stable and unstable manifolds of the two fixed saddles  \cite{BedSmi04,BedSmi06} (see FIGURE 1).
On the other hand, close to and at the left of $a^*$ there exists a positive measure set of $a$-values corresponding to maps which admit nonuniformly hyperbolic strange attractors \cite{BenCar91}. Despite the importance of this transition, many of its aspects are poorly understood, apart from a few partial results \cite{Rio01, Tak12}. 

In this paper we study the dynamics of $f_{a^*}$ from the viewpoint of ergodic theory and thermodynamic formalism. Write $f$ for $f_{a^*}$, and let $\Omega$ denote the non-wandering set of $f$. This set is closed, bounded and hence compact.
Let $\mathcal M(f)$ denote the space of all $f$-invariant Borel probability measures endowed with the topology of weak convergence. 
For a potential function $\varphi: \Omega\to \mathbb R$ the associated (minus of the) free energy function $F_\varphi\colon\mathcal M(f)\to\mathbb R$ is given by
$$F_\varphi(\mu):= h(\mu)+\mu(\varphi),$$
where $h(\mu)$ denotes the entropy of $\mu$ and $\mu(\varphi)=\int\varphi d\mu$. An \emph{equilibrium measure} for the potential $\varphi$ is a measure $\mu_\varphi\in\mathcal M(f)$ which maximizes $F_{\varphi}$, i.e.
$$F_\varphi(\mu_\varphi)=
\sup\{F_\varphi(\mu)\colon\mu\in\mathcal M(f)\}.$$
The main example of potential functions we are concerned with is the family of potentials
$$\varphi_t:=-t\log J^u\quad t\in\mathbb R,$$ where $J^u$ denotes the Jacobian along the \emph{unstable direction} which is defined as follows. At a point $z\in \Omega$, let $E_z^u$ denote the one-dimensional subspace such that
\begin{equation}\label{eu}
\varlimsup_{n\to\infty}\frac{1}{n}\log\|D_zf^{-n}|E_z^u\|<0.\end{equation}
Since $f^{-1}$ expands area, $E^u_z$ is unique when it makes sense. We call $E^u_z$ the {\it unstable direction at $z$} and define $J^u(z):=\Vert D_zf|E^u_z\Vert$.
It was proved in  \cite[Proposition 4.1]{SenTak11} that $E^u$ makes sense for all $z\in\Omega$,
and is continuous except at the fixed saddle $Q$ near $(-1,0)$.

\begin{figure}
\begin{center}
\includegraphics[height=6.5cm,width=14cm]
{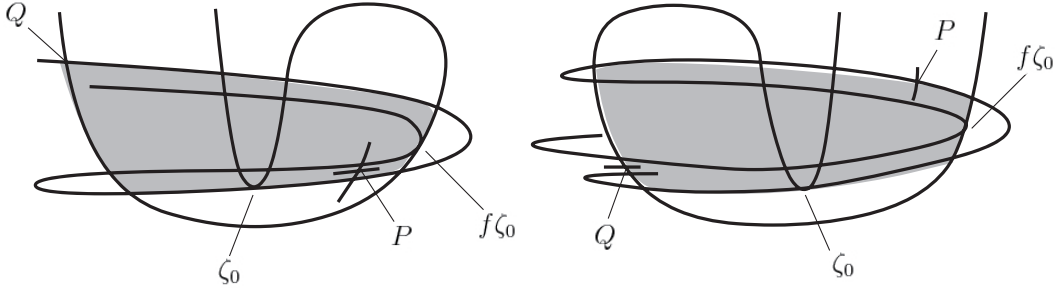}
\caption{Manifold organization for $a=a^*$. There exist two
hyperbolic fixed saddles $P$, $Q$ near $(1/2,0)$, $(-1,0)$
correspondingly. In the orientation preserving case (left),
$W^u(Q)$ meets $W^s(Q)$ tangentially. In the orientation reversing
case (right), $W^u(P)$ meets $W^s(Q)$ tangentially. The shaded
regions represent the region $R$ (see Sect.\ref{family}).}
\end{center}
\end{figure}

The (non-uniform) expansion along the unstable direction is responsible for the chaotic behavior.
Therefore, information on the dynamics of $f$ as well as the geometry of $\Omega$ is obtained
by studying equilibrium measures for $\varphi_t$, and the associated pressure function
$t\in\mathbb R\mapsto P(t)$, where
$$P(t):=\sup\{F_{\varphi_t}(\mu)\colon
\mu\in\mathcal M(f)\}.$$
Since $\varphi_t$ is merely bounded measurable, the existence of equilibrium measures for $\varphi_t$, let alone the uniqueness, is an issue.
The existence was studied in \cite{SenTak11}.
We are now concerned with the existence and uniqueness of equilibrium measures for $\varphi_t$, and their geometric and statistical properties.

\begin{theorema}
For any bounded interval $I\subset(-1,\infty)$ there exists $b_0>0$ such that if $0<b<b_0$, then for all $t\in I$ there exists a unique equilibrium measure for $\varphi_t$.
\end{theorema}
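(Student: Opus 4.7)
The plan is to establish Theorem~A by constructing a countable Markov inducing scheme for $f$, applying the thermodynamic formalism for countable Markov shifts to the induced system, and projecting back to $\Omega$ via Abramov's and Kac's formulas. This is the Young tower strategy, adapted to the first-bifurcation setting using the geometric description of $\Omega$ developed in \cite{SenTak11}.

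The main steps are as follows. (i) Construct a u-rectangle $\Lambda\subset\Omega$ bounded away from the tangency, a countable family of sub-rectangles $\Lambda_i\subset\Lambda$, and first-return times $R_i\in\mathbb N$ so that $f^{R_i}\colon\Lambda_i\to\Lambda$ is a uniformly hyperbolic bijection with bounded distortion; this reduces matters to the full shift on a countable alphabet with induced potential $\Phi_t=\sum_{k=0}^{R-1}\varphi_t\circ f^k$. (ii) Verify that $\Phi_t$ is locally H\"older with summable variations and has finite Gurevich pressure, i.e., that $\sum_i\sup_{\Lambda_i}\exp(\Phi_t)<\infty$; Sarig's theorem then produces a unique equilibrium state $\nu_t$ for $\Phi_t$ on the tower. (iii) Define $\mu_t\in\mathcal M(f)$ by averaging $\nu_t$ through its forward orbit and normalizing by $\int R\,d\nu_t$; Abramov's formula gives $F_{\varphi_t}(\mu_t)=P(t)$, which proves existence. (iv) For uniqueness, show that any equilibrium measure $\mu$ for $\varphi_t$ must lift to the tower and hence coincide with $\mu_t$. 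The hypothesis $t\in I\subset(-1,\infty)$ enters both in (ii), where it is needed to trade the combinatorial growth of the number of $n$th-return branches against the geometric weight $\exp(\Phi_t)$, and in (iv), where a direct free-energy comparison shows that any measure concentrating on the non-hyperbolic locus near the tangency has $F_{\varphi_t}<P(t)$ and therefore cannot be an equilibrium measure.

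The main obstacle, in my view, is the tail estimate of step~(ii) together with the liftability in step~(iv), uniformly over $t\in I$. Since $J^u$ is only bounded measurable---discontinuous at $Q$ by the result recalled above and degenerate near the tangency---the induced potential $\Phi_t$ has unbounded oscillation across the levels of the tower, and controlling $\sum_i\sup_{\Lambda_i}\exp(\Phi_t)$ requires sharp quantitative hyperbolicity and unstable-length estimates for the return branches, of the kind available only in the strongly dissipative regime $b\ll 1$. The dependence $b_0=b_0(I)$ in the statement reflects that these estimates degenerate as $I$ approaches the endpoint $-1$; ruling out non-liftable equilibrium measures in step~(iv) likewise requires a Pliss-type argument to guarantee that positive free-energy measures visit $\Lambda$ with controlled return statistics.
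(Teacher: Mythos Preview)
Your outline matches the paper's architecture through steps (i)--(iii): the paper builds an inducing scheme on a lattice $\Lambda\subset\Theta$ (Proposition~\ref{lat}), proves strongly summable variations and finite Gurevich pressure for the induced potential (Lemmas~\ref{summable}, \ref{finiteG}), establishes positive recurrence on an interval $(t_-,t_+)$ with $t_-\to-1$, $t_+\to\infty$ as $b\to0$ (Lemma~\ref{pr}), and obtains a unique candidate equilibrium measure among liftable measures via Proposition~\ref{mainthemorem}.

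Where your sketch diverges from the paper is in step~(iv), and this is exactly the delicate point. You suggest a Pliss-type argument to force positive-free-energy measures into $\Lambda$; the paper does something quite different. It proves (property~(P6)) that any ergodic $\mu$ with $h(\mu)\ge2\varepsilon$ is liftable, by showing that the ``bad set'' $B$ of points never entering $\Lambda$ has unstable Hausdorff dimension $\le\varepsilon$ on every piece of $W^u$ (Lemmas~\ref{bad}, \ref{esr0}). This dimension bound is then transferred to the Pesin unstable manifolds of $\mu$ via the holonomy along local stable leaves, which is \emph{Lipschitz} here thanks to the bunching condition ${\rm dim}\,E^u=1$ (Remark~\ref{holono}); combined with the Ledrappier--Young formula $\dim^u(\mu)=h(\mu)/\lambda^u(\mu)$, this forces $\mu(B^c)>0$ and hence liftability. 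Finally, an elementary free-energy comparison (Lemma~\ref{t1'}) shows that non-liftable measures, having $h(\mu)<2\varepsilon$, satisfy $F_{\varphi_t}(\mu)<P_L(\varphi_t)$ for $t\in(t_-,t_+)$. A Pliss argument would give hyperbolic times but not obviously positive measure for the specific inducing domain $\Lambda$, whose construction excludes points approaching the tangency too fast; the Lipschitz holonomy is flagged in the paper as essential and specific to this dissipative setting, so your proposed substitute would need a genuinely new idea here.
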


Several remarks are in order on Theorem A.
 Since entropies of invariant probability measures are written as linear combinations of the entropies of the ergodic components, and the same property holds for unstable Lyapunov exponents, the equilibrium measures in Theorem A must be ergodic. In addition, from our construction, they are supported on $\Omega$, i.e., give positive weight to any open set intersecting $\Omega$.

It was proved in \cite[Theorem]{SenTak11} that equilibrium measures for $\varphi_t$ exist for all negative $t$ and some (many) positive $t$.
We cannot rule out the possibility of the coexistence of multiple equilibrium measures for $t\leq -1$,
as is the case for the Chebyshev quadratic polynomial $x\in[-1,1]\to 1-2x^2$. This is the reason why assume $t>-1$.

There are still few results concerning the thermodynamics of the H\'enon maps. All currently known results in this direction are concerned with positive Lebesgue measure sets of parameters (close to but not containing $a^*$)  for which the corresponding maps exhibit {\it strange attractors} \cite{BenCar91,Ber09, Ber12,MorVia93,WanYou01}. For these parameters, SRB measures are constructed and shown to be unique in \cite{BenYou93} (see also \cite{Ber09}).
In our terms, these measures are
equilibrium measures for $\varphi_t$ with $t=1$.
The existence of equilibrium measures for continuous potentials is established in \cite{WanYou01}, and in particular, measures of maximal entropy exist. These are equilibrium measures for $\varphi_t$ with $t=0$. 
The uniqueness of measures of maximal entropy for a positive Lebesgue measure set of parameters is proved in \cite{Ber12}. The existence of equilibrium measures for $\varphi_t$ with $t$ other than $0,1$ is not known.

The construction used in the proof of Theorem A allows us to characterize the Hausdorff dimension of a (one dimensional) unstable slice of $\Omega$ as the first zero of the pressure (see also \cite{LepRio09,ManMac83,UrbWol08}).   
Given a $C^1$ one-dimensional submanifold $\gamma$ of $\mathbb R^2$ and $p\in(0,1]$, 
 the Hausdorff $p$-measure of a set $A\subset\gamma$ is given by
$$m_{p}(A)=
\lim_{\delta\to0}\left(\inf\sum_{U\in\mathcal
U } \ell(U)^p \right).
$$ 
Here, $\ell$ denotes the diameter with respect to the induced metric
on $\gamma$,  and the infimum is taken over all coverings $\mathcal U$ of $A$ by open sets in $\gamma$ with diameter $\leq\delta$.
 The Hausdorff dimension of $A$ on $W^u(P)$, simply denoted by $\dim^u_H(A)$, is the unique number in $[0,1]$ such that
$$
\dim^u_H(A)=
\sup\{p\colon m_p(A)=\infty\}=\inf\{p\colon m_p(A)=0\}.
$$

The pressure function $t\mapsto P(t)$ is convex, and so continuous.
One has $P(0)>0$, and Ruelle's inequality \cite{Rue78} gives $P(1)\leq0$. Since $f$ has no SRB measure \cite{Tak12}, $P(1)<0$ holds. Hence the equation $P(t)=0$ has a unique solution in $(0,1)$, which we denote by $t^u$.

\begin{theoremb}
For any open set $\gamma$ in the unstable manifold of the fixed saddles of  with $\gamma\cap \Omega\neq\emptyset$, we have ${\rm
HD}(\gamma\cap \Omega)=t^u$. In addition, $t^u\to1$ as $b\to0$.
\end{theoremb}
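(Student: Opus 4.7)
The plan is to derive Theorem B from Theorem A via a Bowen--Manning--McCluskey type formula for the unstable slice of $\Omega$, and then to handle the $b\to 0$ limit by comparison with the Chebyshev map $x\mapsto 1-2x^2$. Applying Theorem A at $t=t^u$ produces an ergodic equilibrium measure $\mu:=\mu_{t^u}$ which, by the remark following Theorem A, charges every open set intersecting $\Omega$. Since $P(t^u)=0$, the equilibrium identity rewrites as $h(\mu)=t^u\lambda^u(\mu)$ with $\lambda^u(\mu):=\int \log J^u\,d\mu>0$, so that $t^u=h(\mu)/\lambda^u(\mu)$.

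For the lower bound $\dim^u_H(\gamma\cap\Omega)\geq t^u$ I would appeal to a Ledrappier--Young type pointwise dimension formula: the conditional measures $\mu^u_z$ of $\mu$ on the unstable lamination satisfy
\[
\lim_{r\to 0}\frac{\log \mu^u_z(B^u(z,r))}{\log r}=\frac{h(\mu)}{\lambda^u(\mu)}=t^u
\]
for $\mu$-a.e.\ $z$. Within the induced-tower framework underlying Theorem A this reduces to Rokhlin's formula on the induced system together with Abramov's relation. A Frostman mass-distribution argument then yields $\dim^u_H\geq t^u$ on any leaf carrying a positive fraction of $\mu^u_z$; full support of $\mu$ on $\Omega$ transports the bound to every admissible $\gamma$.

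For the upper bound $\dim^u_H(\gamma\cap\Omega)\leq t^u$ I would cover $\gamma\cap\Omega$ by the unstable cylinders $\{C^u_n\}$ produced by the Markov return structure of \cite{SenTak11}, whose diameters are comparable to $\prod_{k=0}^{n-1}J^u(f^kz)^{-1}$. For any $p>t^u$ one has $P(p)<0$, and bounded distortion on the induced system together with Abramov rescaling should give an estimate of the form
\[
\sum_{C^u_n}\ell(C^u_n)^{p}\;\lesssim\;e^{nP(p)}\longrightarrow 0,
\]
so that $m_p(\gamma\cap\Omega)=0$; letting $p\downarrow t^u$ closes the first assertion. The principal obstacle lies here: contributions from long return times, corresponding to orbits that shadow the tangency, must be controlled so as to preserve the Bowen estimate. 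This is where the fine geometric description of the first-bifurcation tangency developed in \cite{SenTak11} becomes essential.

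For $t^u\to 1$ as $b\to 0$ I would argue by continuity of the pressure in $b$. In the limit $b=0$ the dynamics degenerates to the Chebyshev quadratic $x\mapsto 1-2x^2$, whose absolutely continuous invariant measure satisfies $h=\lambda=\log 2$, so its one-dimensional pressure vanishes exactly at $t=1$. Combining $J^u\to|f'_0|$ on the relevant parts of $\Omega$ with upper semicontinuity of the pressure and the lower bound furnished by pushing forward the equilibrium measures of Theorem A, one obtains convergence $P_b(t)\to P_0(t)$ on compact subsets of $(-1,\infty)$. Strict convexity of $P_b$ near its unique zero then forces $t^u(b)\to 1$.
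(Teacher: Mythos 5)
Your proposal has two genuine gaps, one structural and one in the covering argument. Structurally, deriving Theorem B from Theorem A is circular in this paper's architecture: Theorem B is proved \emph{before} Theorem A, and the proof of Theorem A depends on it, since the admissible interval $(t_-,t_+)$ in \eqref{t+-} is defined through $t^u$ and $\lambda^u(\mu_{t^u})$, and the claim that it eventually contains any bounded $I\subset(-1,\infty)$ uses exactly $t^u\to1$ together with Corollary \ref{lambdatu}. The paper avoids this by taking the existence of an ergodic equilibrium measure $\mu_{t^u}$ from \cite{SenTak11}, not from Theorem A. More importantly, your upper bound does not cover all of $\gamma\cap\Omega$: the inducing/Markov structure only sees points whose orbit eventually enters the induced domain $\Lambda$, whereas the set $B$ of points that never return (orbits falling too fast towards the tangency) is covered by no unstable cylinder and is not known to be empty. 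The paper spends Lemmas \ref{bad}, \ref{esr0} and \ref{dim1} proving that this exceptional set has unstable Hausdorff dimension at most $\varepsilon$, and needs Lemma \ref{tulow} ($t^u>\log 2/\log 5$) so that in the decomposition the maximum is realized by $d^u(X)$. Your estimate $\sum_{C^u_n}\ell(C^u_n)^p\lesssim e^{nP(p)}$ also presupposes a Bowen-type relation between cylinder lengths and the \emph{original} pressure $P(p)$, which is not available here (the coding of $f|\Omega$ is only a semiconjugacy onto the $2$-shift); the paper instead works on the induced countable shift and shows $P_G(\overline{\varphi_p})<0$ for $p>t^u$ via Lemma \ref{finiteG}, the counting bound (P5) and the distortion properties (P3)--(P4). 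Your lower bound via the Ledrappier--Young formula is the paper's argument in spirit, but ``full support transports the bound to every admissible $\gamma$'' is too quick: one needs liftability of $\mu_{t^u}$, i.e. (P6), which requires $h(\mu_{t^u})\geq 2\varepsilon$ (hence Lemma \ref{Lyap}), plus the facts that every open piece of $W^u$ meeting $\Omega$ is expanded by some iterate onto a full leaf of $\tilde\Gamma^u$ and that holonomies along $\Gamma^s$ are Lipschitz, in order to compare with $d^u(X)$.

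For the second assertion, your argument rests on convergence $P_b(t)\to P_0(t)$ as $b\to 0$, which is an unproven and genuinely delicate claim: in this limit the diffeomorphisms degenerate to a non-invertible interval map, the potential $-t\log J^u$ is discontinuous, and neither the paper nor the cited literature establishes any such continuity of pressure across this singular limit (semicontinuity of entropy and pressure typically fails there). The paper's route is elementary and self-contained: it builds a Cantor set $E_\infty\subset\gamma^u(\zeta_0)\cap\Omega$ by removing at each step the preimages of the fixed critical core $\Theta_0$, controls the lengths and the number of construction intervals (Lemma \ref{lengthf} and \eqref{tueq2}--\eqref{tueq3}), and applies the Mass Distribution Principle to get $\dim^u_H(E_\infty)\geq \log\rho/\log(2+\varepsilon)$ with $\rho=(2-\varepsilon)\bigl(1-C\,{\rm diam}(\Theta_0)\bigr)$, a bound arbitrarily close to $1$ for small $\varepsilon$, $1/N$, $b$; combined with the first statement this yields $t^u\to1$. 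To salvage your scheme you would have to prove the pressure convergence itself, which is likely harder than the statement you are trying to establish.
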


Our results are similar in spirit to the ones of Leplaideur and Rios \cite{LepRio05,LepRio09},
in which a thermodynamic formalism for certain horseshoes with three branches and a single orbit of tangency was established (also see \cite{Lep11}).
Certain hypotheses in \cite{LepRio05,LepRio09} on expansion/contraction rates and curvatures of invariant manifolds near the tangency
are no longer true in our setting due to its strong dissipation.
Our approach here is to take advantage of this strong dissipation,
as in the study of H\'enon-like systems \cite{BenCar91,BenYou00,MorVia93,WanYou01}.

The construction and study of many relevant invariant measures can be carried out on the symbolic level, when a coding of the orbits into symbolic sequences is available.
For uniformly hyperbolic systems, Markov partitions are used to code orbits with symbolic sequences over a finitely alphabet. The existence and uniqueness of equilibrium measures for H\"older continuous potentials were established in \cite{Bow75,Ru78,Sin72}.
However, the map $f$ lacks such a nice partition. Indeed, the natural partition of $\Omega$ into the ``left" and the ``right" of the point of tangency near the origin, constructed in \cite{SenTak11} 
only defines a semi-conjugacy between $f|\Omega$ and the full shift on two symbols. In order to avoid the discontinuity of $\varphi_t$ at $Q$, we must consider a (non-compact) subset of $\Omega$ which does not contain $Q$. We code the dynamics on this subset with a \emph{countable} alphabet to establish the uniqueness (countable partitions were also constructed in \cite{Hoe08, LepRio09} albeit for other purposes/maps).

Our strategy for proving the uniqueness of the equilibrium measures is to construct an invariant measure as a candidate, and then show that it is indeed a unique measure which 
maximizes $F_{\varphi_t}$. The main step is to build an \emph{inducing scheme} $(S,\tau)$. Here $S$ is a countable collection of pairwise disjoint 
Borel subsets of $\Omega$ called {\it basic elements}. The union of all basic elements is denoted by $X$, and $\tau$ is the first return time to $X$, which is constant on each basic element. The inducing scheme  allows us to represent the first return map to $X$ as a countable (full) Markov shift. Under certain conditions on the potential function, which are satisfied by $\varphi_t$ with $t\in(t_-,t_+)$, where $t_-<0<t_+$ depend on $t^u$ (see \eqref{t+-} for the precise definition), one can construct a Gibbs measure in the shift space following \cite{MauUrb01, Sar03}. The interval $(t_-,t_+)$ of Theorem A can be chosen arbitrarily large in $(-1, \infty)$ since $t_-\to-1$ and $t_+\to \infty$ as $t^u$ goes to $1$ (c.f. Theorem B).
This Gibbs measure is then used to obtain a unique invariant measure for the original system which minimizes the free energy among all measures which are \emph{liftable} to the inducing scheme (i.e. those measures which can be obtained from symbolic shift invariant measures).


To show that the candidate measure is a unique equilibrium measure, one must 
show that non liftable measures (e.g., the Dirac measure at $Q$) do not maximize $F_{\varphi_t}$. This can be done in two steps. We first  show that any ergodic measure with sufficiently large entropy is liftable to the inducing scheme $(S,\tau)$.  
We then show that, with some restriction on $t$, measures with small entropy do not maximize $F_{\varphi_t}$.
In the second step we essentially use the fact that holonomy maps along stable manifolds are Lipschitz continuous.
This is false in general, but true for $f$ as explained in Remark \ref{holono}.

The construction of our inducing scheme is 
inspired by the work of Benedicks and Young \cite{BenYou00} on H\'enon-like strange attractors:
points returning to a neighborhood of the tangency too fast, for which ``long stable leaves" cannot be constructed, must be 
excluded from consideration.
As a result, each basic element of the inducing scheme constructed here is Cantor-like.
In addition, 
one must analyze its Hausdorff dimension, because Lebesgue almost every initial point diverges  to infinity under positive 
iteration \cite{Tak12}.
These factors make estimates more involved than \cite{BenYou00}.

We now move on to geometric and statistical properties. In what follows, let $\mu_t$ denote the equilibrium measure for $\varphi_t$ in Theorem A. We first give a characterization of $\mu_{t^u}$ in terms of dimension. To give a precise statement let us recall general facts on nonuniformly hyperbolic systems. Let $\mathcal M^e(f)$ denote the set of ergodic elements of $\mathcal M(f)$. Since any $\mu\in\mathcal M^e(f)$ has exactly one positive Lyapunov exponent \cite{CLR08}, 
for $\mu$-a.e. $x\in\Omega$ the set
\begin{equation}\label{unsm}
W^u(x)=\left\{y\in\mathbb R^2\colon \varlimsup_{n\to\infty}\frac{1}{n}
\log |f^{-n}x-f^{-n}y|<0\right\}
\end{equation}
is a smooth injectively immersed one-dimensional submanifold of $\mathbb R^2$ \cite{Pes76,Rue79}.
We call $W^u(x)$ the {\it unstable manifold of $x$}. Let $\{\mu_x^u\}_{x\in\Gamma}$ denote the {\it canonical system of conditional measures of $\mu$ along unstable manifolds} \cite{Rok67}: $\mu_x^u$ is a probability measure supported on $W^u(x)$ such that $x\mapsto \mu_x^u(A)$ is measurable and $\mu(A)=\int\mu_x^u(A)d\mu(x)$ for any measurable set $A$. Let ${\rm dim}(\mu_x^u)$ denote the dimension of $\mu_x^u$, namely
$$
\dim(\mu_x^u)=\inf\{\dim^u_H(X)\colon X\subset W^u(x),\mu_x^u(X)=1\}.
$$ 
Then, $\dim(\mu_x^u)$ is constant $\mu$-a.e. and this number is denoted by $\dim^u(\mu)$. We say $\mu\in\mathcal M^e(f)$ is a {\it measure of maximal unstable dimension} if
$$
\dim^u(\mu)=\sup\{\dim^u(\nu)\colon\nu\in\mathcal M^e(f)\}.
$$

\begin{theoremc}
$\mu_{t^u}$ is the unique measure of maximal unstable dimension.
\end{theoremc}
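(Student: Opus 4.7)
My approach rests on three pillars: the Ledrappier--Young dimension formula, the variational characterization of the pressure, and the uniqueness statement of Theorem A. The identity $P(t^u)=0$ is the pivot: it turns the equilibrium relation $h(\mu_{t^u})-t^u\lambda^u(\mu_{t^u})=P(t^u)$ into $h(\mu_{t^u})=t^u\lambda^u(\mu_{t^u})$, where $\lambda^u(\mu):=\int\log J^u\,d\mu$; after dividing by $\lambda^u(\mu_{t^u})$ this is precisely the statement that the unstable dimension of $\mu_{t^u}$ equals $t^u$.

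First I would invoke the Ledrappier--Young formula for unstable conditional measures: for every ergodic $\nu\in\mathcal M^e(f)$ with positive Lyapunov exponent $\lambda^u(\nu)>0$,
$$\dim^u(\nu)=\frac{h(\nu)}{\lambda^u(\nu)}.$$
Because the unstable foliation is one-dimensional, and---thanks to the strong dissipation and the Lipschitz continuity of stable holonomies noted in the discussion following Theorem A---the Pesin-theoretic prerequisites (local unstable manifolds of definite size on sets of positive measure, absolute continuity of stable holonomies, exact-dimensionality of hyperbolic measures) hold on $\Omega$, the classical formula applies. Applied at $\nu=\mu_{t^u}$, whose support is all of $\Omega$ and which therefore has strictly positive entropy (and so strictly positive Lyapunov exponent), this gives $\dim^u(\mu_{t^u})=t^u$.

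Next I would treat any other ergodic $\nu$ with $\lambda^u(\nu)>0$. By the very definition of $P$ as a supremum, $h(\nu)-t^u\lambda^u(\nu)\leq P(t^u)=0$, so $\dim^u(\nu)\leq t^u$. Equality in this inequality would make $\nu$ itself an equilibrium measure for $\varphi_{t^u}$; since $t^u\in(0,1)\subset(-1,\infty)$, Theorem A then forces $\nu=\mu_{t^u}$. Ergodic measures with $\lambda^u(\nu)=0$ satisfy $h(\nu)=0$ by Ruelle's inequality, and their unstable conditionals are essentially atomic, so $\dim^u(\nu)=0<t^u$. Combining the three cases gives the claimed maximality and uniqueness.

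The main obstacle is the rigorous justification of the Ledrappier--Young formula in this nonuniformly hyperbolic, non-SRB setting at the first bifurcation, where $\log J^u$ is merely bounded measurable (discontinuous at $Q$) and $\Omega$ is not a hyperbolic set. One must construct Pesin-type local unstable manifolds of definite size and verify absolute continuity (here even Lipschitz) of stable holonomies on Pesin blocks of positive measure for every hyperbolic ergodic measure on $\Omega$---an adaptation that is delicate because of the tangency near $Q$. A secondary but nontrivial point is to rule out exotic ergodic measures with $\lambda^u=0$; this should follow from the pointwise definition \eqref{eu} of the unstable direction together with the sub-additive (reverse) Fatou lemma, forcing $\lambda^u(\nu)>0$ for any $f$-invariant ergodic $\nu$ on $\Omega$ or at worst confining the zero-exponent case to measures supported on the tangency orbit.
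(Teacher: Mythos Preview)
Your argument is correct and shares its core with the paper's: the Ledrappier--Young formula $\dim^u(\nu)=h(\nu)/\lambda^u(\nu)$, the identity $P(t^u)=0$, and the uniqueness from Theorem~A. Where you differ is in the upper bound $\dim^u(\nu)\le t^u$. You get it in one line from the variational inequality $h(\nu)-t^u\lambda^u(\nu)\le P(t^u)=0$ together with the dimension formula. The paper instead splits into liftable versus non-liftable measures: for liftable $\nu$ it uses that the unstable conditionals are supported on $\bigcup_{n\ge0}f^nX$ and invokes the geometric fact $d^u(X)=t^u$ from Theorem~B; for non-liftable $\nu$ it uses (P6) to force $h(\nu)<2\varepsilon$, whence $\dim^u(\nu)<t^u$. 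Your route is shorter and closer in spirit to the classical Manning--McCluskey argument; the paper's route has the virtue of tying the result back to the inducing-scheme geometry and Theorem~B, but is not needed for the bare statement.

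Two of your stated obstacles are not obstacles here. First, the zero-exponent case does not occur: the paper records (citing \cite{CLR08}) that every $\nu\in\mathcal M^e(f)$ has exactly one positive Lyapunov exponent, and Lemma~\ref{Lyap} gives the quantitative bound $\lambda^u(\nu)\ge\log(2-\varepsilon)$. Second, the Ledrappier--Young formula is invoked in the paper without further ado (it is used already in the proofs of (P6) and Theorem~B), since $f$ is a $C^2$ surface diffeomorphism and every ergodic measure on $\Omega$ is hyperbolic; the discontinuity of $\log J^u$ at $Q$ is irrelevant to \cite{LedYou85}, which requires only the hyperbolicity of the measure. So you may drop those caveats and your proof goes through cleanly.
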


Considering the tower associated to the inducing scheme allows us to apply the result of Young \cite{You98} to deduce several statistical properties of $\mu_t$.
\begin{theoremd}
The following holds for $(f,\mu_t)$;
\begin{itemize}
\item[(1)]for any $\eta\in(0,1]$ there exists $\tau\in(0,1)$ such that for any H\"older continuous $\phi\colon \Omega\to\mathbb R$ with H\"older exponent $\eta$ and $\psi\in L^\infty(\mu_t)$, there exists a constant $C(\phi,\psi)$ such that
$$
\left| \mu_t((\varphi\circ f^n)\psi)-\mu_t(\varphi)\mu_t(\psi)\right|\leq C(\varphi,\psi)\tau^{n}\quad\text{for every }n>0;
$$
\item[(2)]
 for any H\"older continuous $\phi\colon \Omega\to\mathbb R$ with $\int\phi d\mu_t=0$, there exists $\sigma\geq0$ such that
$$
\frac{1}{\sqrt{n}}\sum_{i=0}^{n-1}\phi\circ f^i \ \longrightarrow \ \mathcal N(0,\sigma)\quad\text{ in distribution},
$$
where $\mathcal N(0,\sigma)$ is the normal distribution with mean $0$ and variance $\sigma^2$. In addition, $\sigma>0$ if and only if $\phi\neq\psi\circ g-\psi$ for any $\psi\in L^2(\mu_t)$.
\end{itemize}
\end{theoremd}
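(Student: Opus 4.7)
The strategy is to lift $\mu_t$ to a Young tower built from the inducing scheme $(S,\tau)$ used in the proof of Theorem A, and then invoke the general statistical theorems of \cite{You98}. First I form the tower
$$
\hat X=\{(x,k): x\in X,\ 0\le k<\tau(x)\}
$$
with tower map $\hat f(x,k)=(x,k+1)$ for $k+1<\tau(x)$ and $\hat f(x,\tau(x)-1)=(f^{\tau(x)}(x),0)$. The projection $\pi(x,k)=f^k(x)$ semi-conjugates $\hat f$ to $f$. The Gibbs measure on the countable full shift associated to $(S,\tau)$ produces an $\hat f$-invariant probability measure $\hat\mu_t$ on $\hat X$ whose image under $\pi$ is $\mu_t$.

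Next I would verify the axioms of \cite{You98} for $(\hat f,\hat\mu_t)$: (i) each basic element $J\in S$ carries a hyperbolic product structure, i.e.\ is the intersection of a family of ``long'' unstable curves with a family of ``long'' stable curves, inherited from the construction of the inducing scheme; (ii) bounded distortion of $Df^\tau|E^u$ along unstable leaves, already established in the proof of Theorem A; (iii) Lipschitz holonomy along stable leaves, which holds for $f$ as noted in Remark \ref{holono} and is needed to compare conditional measures on distinct unstable leaves; (iv) an exponential tail estimate $\hat\mu_t(\{\tau>n\})\le C e^{-\alpha n}$ for some $\alpha>0$; (v) aperiodicity of the set of return values, yielding mixing. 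Granted (i)--(v), the main results of \cite{You98} deliver the exponential decay of correlations asserted in item (1), with rate depending on the H\"older exponent $\eta$ through the contraction rate of the induced transfer operator, together with the central limit theorem of item (2) and the coboundary characterization of $\sigma=0$.

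The main obstacle is the exponential tail estimate (iv), which is not needed in the proof of Theorem A and must be established separately. To this end I would exploit the Gibbs property of $\hat\mu_t$: the $\hat\mu_t$-measure of an $n$-cylinder in the countable shift is comparable, up to a universal multiplicative constant, to $\exp(-nP(t))$ times the exponential of the Birkhoff sum of $\varphi_t$ along any point of that cylinder. Combining this with quantitative estimates on the thickness of the Cantor-like basic elements as $\tau\to\infty$, which are themselves driven by the exponential growth of $J^u$ after hyperbolic times and by the bounded recurrence of critical approaches defining admissible returns, should yield an exponential bound on the total $\hat\mu_t$-mass of return towers of height exceeding $n$. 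This quantitative step, absent from the proof of Theorem A, is the heart of the matter; once it is in hand, items (1) and (2) follow by Young's machinery essentially verbatim.
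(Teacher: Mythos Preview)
Your overall plan---lift $\mu_t$ to a Young tower over the inducing scheme $(S,\tau)$, verify Young's axioms, and invoke \cite{You98}---is exactly the paper's approach. However, you misidentify the main obstacle. You write that the exponential tail estimate ``is not needed in the proof of Theorem A and must be established separately,'' but this is incorrect: the estimate is already obtained in the proof of Corollary~\ref{uniqueL} (the uniqueness of the candidate equilibrium measure among liftable measures), and the paper simply cites it as \eqref{exptail}. There the Gibbs property gives
\[
\nu_{\overline{\varphi_t+c}}(J)\le C e^{-P_G(\overline{\varphi_t+c})}e^{c\tau(J)}\max\bigl(\sigma_1^{-t\tau(J)},\sigma_2^{-t\tau(J)}\bigr),
\]
and summing over $\{J:\tau(J)=n\}$ together with the bound $S(n)\le e^{\varepsilon n}$ from (P5) yields exponential decay of $\sum_{\tau(J)=n}\nu(J)$. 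This is precisely the mechanism you sketch in your final paragraph, so your ``new'' work is already done; the paper's proof of Theorem~D is accordingly a one-line reference to \eqref{exptail} and \cite{You98}.

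In short: right strategy, but the step you flag as the heart of the matter was already carried out upstream, and your proof should simply point back to it rather than redo it.
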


The rest of this paper consists of three sections. In Sect.2 we recall the general thermodynamical formalism for maps admitting inducing schemes from~\cite{PesSenZha11}. In Sect.3 we construct an efficient inducing scheme in the above sense. In Sect.4 we define $t_-$, $t_+$ and then check all the conditions on $\varphi_t$, $t\in(t_-,t_+)$, necessary for implementing the theory in Sect.2. This yields an $f$-invariant measure $\mu_t$ which maximizes $F_{\varphi_t}$ among all liftable measures. We show that $\mu_t$ is the unique measure which maximizes $F_{\varphi_t}$ among \emph{all} measures. This completes the proof of Theorem A. Other theorems are also proved in Sect.4.

\section{Equilibrium measures for maps admitting inducing schemes}
In this section we recall the construction of equilibrium measures for $\varphi$ developed in \cite{PesSenZha11}. The main idea is to use an inducing scheme to relate the induced system to a countable Markov shift, and construct a Gibbs measure in the symbolic space for the induced potential following \cite{MauUrb01, Sar03}. Gibbs measures have integrable inducing time and are  used to construct an equilibrium measure for the original map associated to the original potential function.

\subsection{Equilibrium states for countable Markov shifts}
Denote the set of all bi-infinite sequences over a countable alphabet $S$ by $$S^\mathbb{Z}:=\{\underline{a}:=(\dots, a_{-1},a_0, a_1,\dots)\colon a_i\in S,\ i\in\mathbb{Z}\}$$ and the (left full) shift by $\sigma:S^\mathbb{Z}\circlearrowleft$ i.e. $(\sigma({\underline a}))_i=a_{i+1}$. Denote the \emph{cylinder sets} by $$[b_i, \dots, b_j]:=\{\underline{a}\in S^\Z\colon a_k=b_k \text{ for all } i\le k\le j\}.$$ Endow $S^\mathbb Z$ with the topology for which the cylinder sets form a base. The shift $\sigma$ is continuous with respect to this topology. Denote by $\mathcal M(\sigma)$ the collection of $\sigma$-invariant Borel probability measures on $S^{\mathbb Z}$. Given a function $\Phi: S^\mathbb{Z}\to\mathbb{R}$, let
\begin{eqnarray*}
\mathcal{M}_{\Phi}(\sigma) := \{ \nu\in\mathcal M(\sigma)\colon
\nu(\Phi) >-\infty\}.
\end{eqnarray*}
The \emph{n$^{th}$variation} of $\Phi$ is defined by
\[
V_n(\Phi):= \sup_{[b_{-n+1}, \ldots,
b_{n-1}]}\sup_{\underline{a},\underline{a}'\in [b_{-n+1}, \ldots,
b_{n-1}]}|\Phi(\underline{a})-\Phi(\underline{a}')|.
\]
The function $\Phi$ has \emph{strongly summable variation} if
$$
\sum_{n\ge 1}nV_n(\Phi)<\infty.
$$
The \emph{Gurevich pressure} of $\Phi$ is defined by
\[
P_G(\Phi):=\lim_{n\rightarrow
\infty}\frac{1}{n}\log\sum_{\sigma^n(\underline{a})=\underline{a}}
\exp\left(\sum_{k=0}^{n-1}\Phi(\sigma^k(\underline{a}))\right)1_{[b]}(\underline{a}),
\]
where $b\in S$.
Since it depends only on the positive side of the sequences, one can prove (as in \cite[Theorem 1]{Sar99}) that $P_G(\Phi)$ exists and is independent of $b$ whenever the variation $$V_n^+(\Phi):=\sup_{[b_0,\ldots,b_{n-1}]}\sup_{\underline{a},\underline{a}'\in [b_0,\ldots,b_{n-1}]}|\Phi(\underline{a})-\Phi(\underline{a}')|$$ over all \emph{positive cylinders} is summable: $\sum_{n\ge 1}V_n^+(\Phi)<\infty$. Also $P_G(\Phi)>-\infty$ holds in this case.
We say $\nu_{\Phi}\in\mathcal M(\sigma)$ is a \emph{Gibbs measure for $\Phi$} if there exists a constant $C>0$ such that for any cylinder set $[b_{0}, \ldots, b_{n-1}]$ and any $\underline{a}\in [b_{0}, \ldots, b_{n-1}]$ we have
$$
C^{-1} \le \frac{\nu_{\Phi}([b_{0}, \ldots, b_{n-1}])}{\exp\left(-nP_G(\Phi)+\sum_{k=0}^{n-1}\Phi(\sigma^k(\underline{a}))\right)} \le C.
$$
Note that this definition only involves \emph{positive cylinders}. We say $\nu_{\Phi}\in\mathcal M(\sigma)$ is an \emph{equilibrium measure for $\Phi$} if
$$
h_{\nu_{\Phi}}(\sigma)+\nu_{\Phi}(\Phi) =
\sup_{\nu\in{\mathcal M}_{\Phi}(\sigma)}\{ h_{\nu}(\sigma) + \nu(\Phi) \}.
$$
The thermodynamics of the full shift $\sigma$ on the space of two-sided sequences over the countable alphabet $S$ is described in the following theorem from \cite{PesSenZha11}.
\begin{prop}\label{gibbs2}\cite
{PesSenZha11}
Let $\Phi: S^\mathbb{Z}\to\mathbb{R}$ be a potential function with
$\sup\Phi<\infty$ and strongly summable variation. Then
\begin{itemize}
\item[(a)] 
$ P_G(\Phi)=\sup_{\nu\in{\mathcal M}_{\Phi}(\sigma)}\{ h_{\nu}(\sigma) + \nu(\Phi) \};$
\item[(b)] if $P_G(\Phi)<\infty$ then there exists a unique Gibbs measure $\nu_{\Phi}$ for $\Phi$;
\item[(c)] if $h_{\nu_{\Phi}}(\sigma)<\infty$
then $\nu_{\Phi}\in\mathcal{M}_\Phi(\sigma)$ and it is the unique equilibrium measure for $\Phi$.
\end{itemize}
\end{prop}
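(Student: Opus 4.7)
The plan is to adapt the framework of Mauldin--Urbanski and Sarig for one-sided countable Markov shifts to the two-sided setting at hand. A preliminary Sinai-type cohomological reduction leverages the strongly summable variation hypothesis $\sum_n nV_n(\Phi)<\infty$ to produce a function $\tilde\Phi$ depending only on the nonnegative coordinates, together with a bounded continuous coboundary $u$ satisfying $\Phi=\tilde\Phi+u\circ\sigma-u$; one checks directly that $\tilde\Phi$ still has summable positive variation $V_n^+$. This cohomology preserves Gurevich pressure, the Gibbs property and the free energy $h_\nu(\sigma)+\nu(\Phi)$, and the natural extension identifies $\sigma$-invariant measures on $S^\N$ with those on $S^\Z$ with the same entropy. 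It therefore suffices to prove (a)--(c) on the one-sided full shift.

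For part (a), the lower bound is obtained by forming convex combinations of the periodic measures supported on the orbits enumerated in the definition of $P_G(\Phi)$ and verifying that their free energies accumulate to $P_G(\Phi)$, following \cite{Sar99}. The upper bound follows by exhausting $S^\N$ by finite-alphabet subshifts of finite type, applying the classical Walters variational principle on each, and passing to the limit; the conditions $\sup\Phi<\infty$ and $\sum V_n^+(\Phi)<\infty$ together with monotone convergence guarantee that the finite-alphabet pressures approach $P_G(\Phi)$. For part (b), if $P_G(\Phi)<\infty$ we invoke the Ruelle--Perron--Frobenius theorem for the transfer operator $L_{\tilde\Phi}$. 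The full-shift structure makes the Big Images and Preimages property automatic, producing a conformal eigenmeasure $\nu$ and a bounded positive eigenfunction $h$ with common eigenvalue $e^{P_G(\Phi)}$; the candidate Gibbs measure is $d\nu_\Phi=h\,d\nu$ extended to the two-sided shift through the natural extension. The Gibbs bounds follow from the boundedness of $h$ and the summable variation estimates on Birkhoff sums over cylinders. Uniqueness of the Gibbs measure is standard: any second Gibbs measure $\nu'$ has a Radon--Nikodym derivative $d\nu'/d\nu_\Phi$ uniformly bounded above and below, $\sigma$-invariant by the Gibbs property, hence constant $\nu_\Phi$-a.e.\ by the ergodicity of $\nu_\Phi$ (itself inherited from the mixing of the full shift).

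For part (c), the finite entropy assumption allows the direct computation
\[
H_{\nu_\Phi}(\mathcal P_n)=-\sum_{[b_0,\ldots,b_{n-1}]}\nu_\Phi([b_0,\ldots,b_{n-1}])\log\nu_\Phi([b_0,\ldots,b_{n-1}])=nP_G(\Phi)-n\,\nu_\Phi(\Phi)+O(1),
\]
where $\mathcal P_n$ denotes the partition into length-$n$ positive cylinders and the $O(1)$ error comes from the two-sided Gibbs inequality. Dividing by $n$ and letting $n\to\infty$ yields $h_{\nu_\Phi}(\sigma)+\nu_\Phi(\Phi)=P_G(\Phi)$, so by (a) the measure $\nu_\Phi$ is an equilibrium measure. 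For uniqueness, an arbitrary equilibrium measure $\mu\in\mathcal M_\Phi(\sigma)$ together with Rokhlin's formula and the Gibbs bounds on cylinders is shown to be absolutely continuous with respect to $\nu_\Phi$ via a standard convexity argument, and ergodicity of $\nu_\Phi$ then forces $\mu=\nu_\Phi$.

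The main obstacle is the entropy control in (c): obtaining the $O(1)$ error in the display above when $\nu_\Phi$ charges infinitely many cylinders is delicate, because naive cylinder counting blows up. This is precisely where strongly summable variation, rather than merely summable variation, becomes essential: it both underwrites the Sinai cohomological reduction in the first step and yields sharp enough control on the Birkhoff sums across cylinders to absorb the infinite-alphabet combinatorics into a bounded error.
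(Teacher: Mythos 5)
Your proposal is correct and follows essentially the same route as the paper: a Sinai-type cohomological reduction (using strong summability) to a one-sided potential with summable variations, the one-sided countable-full-shift theory of Sarig/Mauldin--Urba\'nski/Buzzi--Sarig for the variational principle, Gibbs measure and unique equilibrium measure, and then the natural extension to return to the two-sided shift. The paper simply cites these one-sided results, whereas you sketch their proofs; the only slight misattribution is in your closing remark, since strong summability is what powers the reduction step, while the entropy/equilibrium identification in (c) needs only summable variations and the Gibbs bounds on the one-sided side.
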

The main idea is to reduce the problem to the (left full) shift on the set of one-sided infinite sequences $S^{\N}$ by constructing a potential function cohomologous to the given potential $\Phi$ but which depends only on the positive coordinates of any point $\underline{a}\in S^{\N}$.
The variational principle and the existence of a unique Gibbs and equilibrium measure for the one-sided shift and potential follows from \cite[Theorem 3]{Sar99},\cite[Theorem 1]{Sar03}, \cite[Theorem 1.1]{BuzSar03} (see also \cite{MauUrb01}). The statements of  Proposition~\ref{gibbs2} follow by considering the natural extension of this one-sided Gibbs and equilibrium measure.

\subsection{Gibbs and equilibrium measures for the induced map}
From now on assume that $f$ is a continuous self map of finite topological entropy of a compact metric space $M$.

\begin{definition}\label{scheme}
We say $f$ admits an \emph{inducing scheme $(S,\tau)$ of hyperbolic type}. 
if there exist a countable collection $S$ of disjoint Borel subsets of $M$ called {\it basic elements}, and an \emph{inducing time function} $\tau: S\to\N$ such that the following holds for the \emph{inducing domain} $X:=\bigcup_{J\in S}J$ and the \emph{induced map} $F:X\circlearrowleft$ defined by $F|J=f^{\tau(J)}|J$ for each $J\in S$. 

\begin{itemize}
\item[(A1)] $F(J)=f^{\tau(J)}J\subset X$ for each $J\in S$, and $f^{\tau(J)}|J$ extends to a homemorphism on $\overline J$;

\item[(A2)]  for any $\underline{a}=\{J_n\}_{n\in\mathbb Z}\in S^{\mathbb Z}$,  
the coding map
$h\colon S^{\mathbb Z}\to X^*:=\bigcup_{J\in S}\overline{J}$ given by
\begin{equation}\label{codingmap}
h(\underline{a}):=
\overline{J_{0}}\cap\left(\bigcap_{n\geq1} f^{-\tau(J_{0})}\circ \cdots\circ f^{-\tau(J_{n-1})}(\overline{J_n})\right)
\cap \left(\bigcap_{n\geq1} f^{\tau(J_{-1})}\circ \cdots\circ f^{\tau(J_{-n})}(\overline{J_{-n}})\right)
\end{equation}
is well-defined. The restriction of $h$ to $S^{\mathbb Z}\setminus h^{-1}(X^*\setminus X)$
is a measurable bijection onto $X$ for which
$F\circ h=h\circ\sigma$.

\item[(A3)] If $\nu$ is a $\sigma$-invariant Borel probability measure, then $\nu(h^{-1}(X^*\setminus X))=0$.
\end{itemize}
\end{definition}

In section~\ref{symbolcode} we construct an inducing scheme which satisfies conditions (A1)--(A3). Note that these conditions are stronger and thus imply the conditions of \cite{PesSenZha11}. This is due to the fact that the inducing scheme is constructed over a first return time and that the boundary of the elements consists of stable manifolds of the fixed point $P$.

If $f$ admits an inducing scheme $(S,\tau)$ of hyperbolic type, the \emph{induced potential} $\overline\varphi\colon X\to\R$ associated to a given potential $\varphi\colon M\to\R$ is defined by
$$\overline{\varphi}:= \sum_{i=0}^{\tau-1}\varphi\circ f^i.$$

We say the induced potential $\overline{\varphi}$ has:
\begin{itemize}
\item (strongly) summable variations if $\Phi:=\overline{\varphi}\circ h$ has (strongly) summable variations;
\item finite Gurevich pressure if $P_{G}(\Phi)<\infty$.
\end{itemize}
Let $\mathcal M(F)$ denote the set of $F$-invariant Borel probability measures on $X$ and  $\mathcal{M}_{\overline{\varphi}}(F)=\{\nu\in\mathcal{M}(F)\colon\nu(\overline{\varphi})>-\infty\}.$ 
An $F$-invariant probability measure $\nu_{\overline{\varphi}}$ is a \emph{Gibbs measure for $\overline{\varphi}$} if there exists an $\sigma$-invariant Gibbs measure $\nu_{\Phi}$ for $\Phi$ such that $\nu_{\overline\varphi}=h_*\nu_{\Phi}$. We call $\nu_{\overline{\varphi}}$ an \emph{equilibrium measure} for $\overline{\varphi}$ if $\nu_{\overline{\varphi}}
\in\mathcal{M}_{\overline{\varphi}}(F)$ and
$$h_{\overline{\varphi}}(F)+\nu_{\overline{\varphi}}(\overline{\varphi})=\sup\left\{\nu\in\mathcal M_{\overline{\varphi}}(F)\colon h_{\nu}(F)+\nu(\overline{\varphi})\right\}.$$

By (A2), $h_*$ preserves entropy, the Gibbs property and integrals of potentials for measures supported on 
$S^{\mathbb Z}\setminus h^{-1}(X^*\setminus X)$. 
Additionally, $h^{-1}(X^*\setminus X)$ does not support any 
measures by Condition (A3).
So the next statement is a direct consequence of Proposition \ref{gibbs2}.

\begin{cor}\label{inducedequilibrium}
Assume $f$ admits an inducing scheme $(S,\tau)$ of hyperbolic type and let $\varphi:M\to \R$ be a potential with
$\sup\overline{\varphi}<\infty$, strongly summable variations and finite Gurevich pressure. Then there exists a unique
$F$-invariant Gibbs measure $\nu_{\overline{\varphi}}$ for
$\overline{\varphi}$. If $h_{\nu_{\overline{\varphi}}}(F)<\infty$ then $\nu_{\overline{\varphi}}\in\mathcal{M}_{\overline{\varphi}}(F)$ and it is the unique equilibrium measure for $\overline{\varphi}$.
\end{cor}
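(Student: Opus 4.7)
The plan is to transfer Proposition \ref{gibbs2} from the symbolic system $(S^{\mathbb{Z}},\sigma,\Phi)$, where $\Phi:=\overline{\varphi}\circ h$, down to $(X,F,\overline{\varphi})$ via the coding map $h$ of (A2). First I would verify that $\Phi$ satisfies the hypotheses of Proposition \ref{gibbs2}: that $\sup\Phi<\infty$ follows from $\sup\overline{\varphi}<\infty$ because $h$ takes values in $X^*$, while strongly summable variation and finite Gurevich pressure for $\Phi$ are exactly what the hypotheses on $\overline{\varphi}$ are defined to mean. Proposition \ref{gibbs2}(b) then produces a unique $\sigma$-invariant Gibbs measure $\nu_{\Phi}$ for $\Phi$.

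Next I would set $\nu_{\overline{\varphi}}:=h_{*}\nu_{\Phi}$. Condition (A3) gives $\nu_{\Phi}(h^{-1}(X^{*}\setminus X))=0$, so by (A2) the map $h$, restricted to the full-measure set $S^{\mathbb{Z}}\setminus h^{-1}(X^{*}\setminus X)$, is a measurable bijection onto $X$ conjugating $\sigma$ to $F$. This at once makes $\nu_{\overline{\varphi}}$ into an $F$-invariant measure and turns $h$ into a measure-theoretic isomorphism between $(\sigma,\nu_{\Phi})$ and $(F,\nu_{\overline{\varphi}})$; in particular $h_{\nu_{\overline{\varphi}}}(F)=h_{\nu_{\Phi}}(\sigma)$ and $\nu_{\overline{\varphi}}(\overline{\varphi})=\nu_{\Phi}(\Phi)$. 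The Gibbs property of $\nu_{\overline{\varphi}}$ would follow by matching positive cylinders $[b_0,\dots,b_{n-1}]\subset S^{\mathbb{Z}}$ to the dynamical rectangles $\overline{J_{b_{0}}}\cap\bigcap_{k=1}^{n-1}f^{-(\tau(J_{b_0})+\cdots+\tau(J_{b_{k-1}}))}(\overline{J_{b_{k}}})$ in $X$, noting that $\sum_{k=0}^{n-1}\Phi(\sigma^{k}\underline{a})=\sum_{k=0}^{n-1}\overline{\varphi}(F^{k}h(\underline{a}))$ and that $P_{G}(\Phi)$ is by definition the pressure attached to $\overline{\varphi}$.

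For uniqueness, any other $F$-invariant Gibbs measure $\nu'$ for $\overline{\varphi}$ comes by definition as $h_{*}\nu'_{\Phi}$ for some $\sigma$-invariant Gibbs measure $\nu'_{\Phi}$; Proposition \ref{gibbs2}(b) forces $\nu'_{\Phi}=\nu_{\Phi}$ and hence $\nu'=\nu_{\overline{\varphi}}$. For the equilibrium statement, assuming $h_{\nu_{\overline{\varphi}}}(F)<\infty$ we get $h_{\nu_{\Phi}}(\sigma)<\infty$ by the isomorphism, so Proposition \ref{gibbs2}(c) identifies $\nu_{\Phi}$ as the unique equilibrium measure for $\Phi$ inside $\mathcal{M}_{\Phi}(\sigma)$. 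Because $h_{*}$ and its inverse set up a bijection between $\mathcal{M}_{\Phi}(\sigma)$ and $\mathcal{M}_{\overline{\varphi}}(F)$ that preserves the free energy $h_{\nu}(\cdot)+\nu(\cdot)$, maximality transfers as well, and $\nu_{\overline{\varphi}}$ emerges as the unique equilibrium measure for $\overline{\varphi}$.

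The only delicate point, and the reason (A3) is listed as a hypothesis, is accounting for the exceptional set $h^{-1}(X^{*}\setminus X)$ where distinct symbolic sequences can be identified with a common boundary point of the $\overline{J}$'s; without (A3) the map $h$ would fail to be a measure-theoretic isomorphism and entropies, integrals, and the Gibbs property could all distort under push-forward. Once this exceptional set is seen to be null for every $\sigma$-invariant measure, the remainder of the argument is bookkeeping and a direct appeal to Proposition \ref{gibbs2}.
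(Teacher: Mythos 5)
Your proposal is correct and follows essentially the same route as the paper: the paper's own justification is precisely that, by (A2), $h_*$ preserves entropy, integrals and the Gibbs property for measures supported off $h^{-1}(X^*\setminus X)$, that (A3) makes this exceptional set null for every $\sigma$-invariant measure, and that the statement is then a direct consequence of Proposition \ref{gibbs2} applied to $\Phi=\overline{\varphi}\circ h$. Your write-up merely makes explicit the bookkeeping (transfer of the Gibbs property via cylinders, uniqueness from the definition of a Gibbs measure for $\overline{\varphi}$ as a push-forward, and the free-energy-preserving bijection between $\mathcal{M}_{\Phi}(\sigma)$ and $\mathcal{M}_{\overline{\varphi}}(F)$) that the paper leaves implicit.
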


\subsection{Candidate equilibrium measures for the original map}\label{candy}
We now use the Gibbs measure for the induced map $F$ to construct an equilibrium measure for the original map $f$. For $\nu\in\mathcal{M}(F)$ with $\nu(\tau)<\infty$, the measure given by
$$\mathcal{L}(\nu):=\frac{1}{\nu(\tau)}\sum_{k=1}^{\infty}(f^k)_*\nu|_{\{\tau< k\}}$$
is an $f$-invariant Borel probability measure. 
Let 
$$\mathcal{M}_L(f):=\{\mu\in\mathcal{M}(f)\colon \exists\ \nu\in\mathcal{M}(F) \mbox{ such that } \mathcal{L}(\nu)=\mu\}.$$
Measures in $\mathcal{M}_L(f)$ are called \emph{liftable}, and
for $\mu\in\mathcal M_L(f)$ a measure $\nu$ with $\mathcal L(\nu)=\mu$
is called a {\it lift} of $\mu$.

 Consider a potential $\varphi\colon M\to\mathbb R$, and let
\begin{equation}\label{PL}
P_L(\varphi):=\sup\{h_\mu(f)+\mu(\varphi)\colon
\mu\in\mathcal{M}_L(f)\}.
\end{equation}
We say $\mu\in\mathcal M_L(f)$ is a \emph{candidate equilibrium measure for $\varphi$} if $F_\varphi(\mu)=P_L(\varphi)$. Candidate equilibrium measures are equilibrium measures in the classical sense if
$\displaystyle{
P_L(\varphi)
=\sup_{\mu\in\mathcal M(f)}
\{h_\mu(f)+\mu(\varphi)\}}$.
Abramov's and Kac's formul{\ae} \cite[Theorem 2.3]{PesSen08} relate the entropy of $\mu$ and the integral of a potential $\varphi$ against $\mu$ to the entropy and the integral of the induced potential $\overline{\varphi}$ against a lift of $\mu$. 
Note that $F_\varphi(\mathcal{L}(\nu))=\frac{1}{\nu(\tau)}F_{\overline{\varphi}}(\nu)$ and so it is not straightforward that an equilibrium measure for $\overline{\varphi}$ projects to a candidate equilibrium measure for $\varphi$. However, this is the case for the equilibrium measure associated to the potential induced by $\varphi-P_L(\varphi)$ and the latter is cohomologous to $\varphi$. 
Observe that by \cite[Theorem 4.2]{PesSen08}, the existence of a periodic point of $F$ implies that $|P_L(\varphi)|<\infty$ whenever $\varphi$ has summable variations and finite Gurevich pressure.

We say $\overline{\varphi}$ is \emph{positive recurrent} if there exists $\eta_0>0$ such that
\begin{equation}\label{PR}
P_G(\overline{\varphi-(P_L(\varphi)-\eta)})<\infty\ \text{for all}\ 0\le\eta\le\eta_0.
\end{equation}
This condition implies positive recurrence condition in the sense of Sarig (c.f.~\cite{Sar03}). Indeed, \cite[Theorem 4.4]{PesSen08} and the continuity of $P_G(\overline{
\varphi-(P_L(\varphi)-\eta)})$ with respect to $\eta$ for a positive recurrent potential $\overline\varphi$ imply $P_G(\overline{\varphi-P_L(\varphi)})=0$.
%
%
%
This implies the existence of some $N\in\N$ such that 
$$\inf_{n\ge N}\left\lbrace
\sum_{F^nx=x}\exp\left(\sum_{i=0}^{n-1}\overline{\varphi-P_L(\varphi)}(F^ix)\right)\right\rbrace>0,
$$
which is equivalent to the positive recurrence condition of Sarig (c.f. \cite[Theorem 1]{Sar03}).

With condition (A3) we obtain the following: 

\begin{prop}\label{mainthemorem}\cite
{PesSenZha11}{\rm (Existence and uniqueness of candidate equilibrium measures)}\
Assume $f$ admits an inducing scheme $(S, \tau)$ of hyperbolic type. Let
$\varphi:M\to\R$ be such that $\sup \overline{\varphi-P_L(\varphi)}<\infty$, and that $\varphi$ has strongly summable variations, finite Gurevich pressure and is positive recurrent. Then there exists a Gibbs measure $\nu$ for $\overline{\varphi-P_L(\varphi)}$. If $h_\nu(F)<\infty$, then $\nu\in\mathcal M_{\overline{\varphi-P_L(\varphi)}}(F)$,
and $\nu$ is the unique equilibrium measure for $\overline{\varphi-P_L(\varphi)}$.
If $\nu(\tau)<\infty$,
then $\mathcal{L}(\nu)$ is the
unique candidate equilibrium measure for $\varphi$.
\end{prop}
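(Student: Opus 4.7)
The plan is to reduce the three conclusions of the proposition to Corollary~\ref{inducedequilibrium} applied to the cohomologous potential $\psi:=\varphi-P_L(\varphi)$, and then transfer equilibrium and uniqueness from the induced system $(F,\overline\psi)$ back to the original system $(f,\varphi)$ via the projection $\mathcal L$. The three stages of the argument track the three successive hypotheses in the statement.

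First I would observe that subtracting the constant $P_L(\varphi)$ preserves strong summability of variations and merely shifts the Gurevich pressure, so $\overline\psi\circ h$ inherits strongly summable variation and finite Gurevich pressure from $\overline\varphi\circ h$. Positive recurrence of $\overline\varphi$ gives $P_G(\overline\psi\circ h)=0$, as noted in the paragraph preceding the statement, and the hypothesis $\sup\overline\psi<\infty$ is assumed directly. Corollary~\ref{inducedequilibrium} then produces the unique $F$-invariant Gibbs measure $\nu$ for $\overline\psi$. When additionally $h_\nu(F)<\infty$, the same corollary upgrades $\nu$ to the unique equilibrium measure for $\overline\psi$, yielding
\[h_\nu(F)+\nu(\overline\psi)=0\quad\text{and}\quad h_{\nu'}(F)+\nu'(\overline\psi)<0\ \text{ for every other }\nu'\in\mathcal M_{\overline\psi}(F).\]

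For the third conclusion, which carries the new content, assume $\nu(\tau)<\infty$. Then $\mathcal L(\nu)\in\mathcal M_L(f)$ is a well-defined $f$-invariant Borel probability measure. Abramov's and Kac's formul{\ae} (\cite[Theorem~2.3]{PesSen08}) give for every $\mu\in\mathcal M_L(f)$ and every lift $\nu'$ of $\mu$
\[F_\psi(\mu)=\frac{1}{\nu'(\tau)}F_{\overline\psi}(\nu').\]
Applied to $\nu$ itself this yields $F_\psi(\mathcal L(\nu))=0$, hence $F_\varphi(\mathcal L(\nu))=P_L(\varphi)$ and $\mathcal L(\nu)$ is a candidate equilibrium measure. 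Conversely, if some $\mu\in\mathcal M_L(f)$ also achieves $F_\varphi(\mu)=P_L(\varphi)$, the same identity forces any lift $\nu'$ of $\mu$ to satisfy $F_{\overline\psi}(\nu')=0$; by the uniqueness clause already established, $\nu'=\nu$, hence $\mu=\mathcal L(\nu')=\mathcal L(\nu)$.

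The step requiring the most care is verifying that a competing lift $\nu'$ of a candidate equilibrium measure actually lies in $\mathcal M_{\overline\psi}(F)$ with finite entropy, so that the uniqueness of the equilibrium measure for $\overline\psi$ applies. The finiteness of $\nu'(\tau)$ and integrability of $\overline\psi$ against $\nu'$ must be extracted from the hypotheses $h_\nu(F)<\infty$, $\nu(\tau)<\infty$, and $\sup\overline\psi<\infty$ together with Abramov's formula itself; the sup bound in particular rules out pathological $\nu'(\overline\psi)=-\infty$ for any $\nu'$ with finite inducing time, and finite-entropy propagates across $\mathcal L$ by Abramov's formula. Once this bookkeeping is secured, the rest of the argument is a mechanical comparison of $F_\psi$ values on $\mathcal M_L(f)$ using the identity above and the equilibrium property of $\nu$.
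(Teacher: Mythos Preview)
The paper does not supply its own proof of this proposition; it is quoted from \cite{PesSenZha11}, and the surrounding paragraphs merely sketch the ingredients (Abramov--Kac, the identity $F_\varphi(\mathcal L(\nu))=\frac{1}{\nu(\tau)}F_{\overline\varphi}(\nu)$, and the fact that positive recurrence forces $P_G(\overline{\varphi-P_L(\varphi)})=0$). Your argument assembles exactly these ingredients in the expected way and is essentially correct.

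Two small imprecisions are worth flagging. First, the sentence ``subtracting the constant $P_L(\varphi)$ \ldots\ merely shifts the Gurevich pressure'' is false as written: passing from $\varphi$ to $\varphi-P_L(\varphi)$ subtracts $P_L(\varphi)\cdot\tau$ from the \emph{induced} potential, and $\tau$ is unbounded, so the Gurevich pressure does not shift by a constant. This is harmless because you then correctly invoke positive recurrence (the case $\eta=0$ of \eqref{PR}) to obtain $P_G(\overline\psi)<\infty$, and the paper's discussion gives $P_G(\overline\psi)=0$. Second, in the last paragraph the claim that $\sup\overline\psi<\infty$ ``rules out $\nu'(\overline\psi)=-\infty$'' is not the right reason: an upper bound on $\overline\psi$ controls $\nu'(\overline\psi)$ from above, not below. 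What actually works is your own earlier observation: any lift $\nu'$ has $\nu'(\tau)<\infty$ by the definition of $\mathcal L$, and Kac gives $\nu'(\overline\psi)=\nu'(\tau)\,\mu(\psi)$, which is finite since $\mu(\psi)=F_\psi(\mu)-h_\mu(f)$ is finite (finite topological entropy). Likewise $h_{\nu'}(F)=\nu'(\tau)h_\mu(f)<\infty$ by Abramov, so $\nu'\in\mathcal M_{\overline\psi}(F)$ and the uniqueness clause of Corollary~\ref{inducedequilibrium} applies. With these two fixes the argument is clean.
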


\section{Construction of inducing scheme}


In this section we construct an inducing scheme which will be used for the proof of the theorems. 
In Sect.\ref{cor} we first state the existence of an inducing scheme with special properties (See Proposition \ref{lat}). 
After preliminary geometric considerations in Sect.\ref{family}, we construct 
in Sect. \ref{latt} and \ref{induced} a uniformly hyperbolic induced map with countably many branches. 
In Sect.\ref{symbolcode} we show how to obtain the inducing scheme from this induced map.

\subsection{Inducing scheme}\label{cor}
We start with preliminary definitions.
\begin{definition}\label{deflattice}
{\rm 
Let  $\Gamma^u$ and $\Gamma^s$ be two families of compact $C^1$ curves such that:
\begin{itemize}
\item  curves in $\Gamma^s$ are pairwise disjoint. Curves in $\Gamma^u$ are not necessarily pairwise disjoint;
\item every $\gamma^u\in\Gamma^u$ intersects every $\gamma^s\in\Gamma^s$ at exactly one point;
\item there is a minimum angle between $\gamma^u$ and $\gamma^s$ at the point of intersection;
\end{itemize}
Call the set
$$\Lambda:=\{\gamma^u\cap\gamma^s\colon
\gamma^u\in\Gamma^u,\gamma^s\in\Gamma^s\}$$
a {\it lattice} defined by the families $\Gamma^u$ and $\Gamma^s$.
}

{\rm
\begin{itemize}
\item $\Lambda'\subset\Lambda$ is a {\it u-sublattice} of $\Lambda$ if there exists ${\Gamma^u}'\subset\Gamma^u$ such that $\Lambda'=\{\gamma^u\cap\gamma^s\colon \gamma^u\in{\Gamma^u}',\gamma^s\in\Gamma^s\}$. An {\it s-sublattice of $\Lambda$} is defined similarly;
\item $Q_{\Lambda'}\subset\mathbb R^2$ is the {\it rectangle spanned by $\Lambda'$} if $\Lambda'\subset Q_{\Lambda'}$ and the boundary 
$\partial Q_{\Lambda'}$ is made up of two non-intersecting curves in ${\Gamma^u}'$ and two in $\Gamma^s$. 
\end{itemize}
}
\end{definition}
We now introduce a small constant $\varepsilon>0$ in order to quantify the proximity of $f$ to the Chebyshev quadratic polynomial $x\in[-1,1]\mapsto 1-2x^2$. Set
\begin{equation}\label{sigma12}
\sigma_1=2-\varepsilon\ \ \text{and}\ \ \sigma_2=4+\varepsilon.
\end{equation}
The next proposition states the existence of an inducing scheme with special properties.

\begin{prop}\label{lat}
For any small $\varepsilon>0$ there exists $b_0>0$ such that if $0<b<b_0$, 
there exist a closed lattice $\Lambda$ defined by families $\Gamma^u$ and $\Gamma^s$, a collection $S$ of pairwise disjoint Borel
subsets of $\Lambda$ and a function $\tau\colon S\to \mathbb N$ such that $(S,\tau)$ is an inducing scheme of $f=f_{a^*(b)}$ with the following properties:
\begin{itemize}
\item[(P1)] {\rm (Topological structure)} 
for each $J\in S$, $f^{\tau(J)}J\subset\bigcup_{J\in S} J$ and $f^{\tau(J)}\overline{J}$ is a $u$-sublattice of $\bigcup_{J\in S} \overline{J}$;
\item[(P2)]{\rm (Backward contraction)}
there exist $C>0$ and $\lambda>1$ such that  for each $\gamma^u\in\Gamma^u$, $z\in\gamma^u$ and $n>0$, $\|D_{f^{-n}z}f^n|T_{f^{-n}z}\gamma^u\|\geq C\lambda^{n};$
\item[(P3)]{\rm  (Hyperbolicity)}
\begin{itemize}
\item[(a)] for each $\gamma^u\in\Gamma^u$,
$J\in S$  and all $z\in\gamma^u\cap Q_{J}$,
$$
\sigma_1^{\tau(J)}\leq\Vert D_zf^{\tau(J) }|T_z\gamma^u\Vert\leq\sigma_2^{\tau(J)},
$$
where $\sigma_1,\sigma_2$ are the constants in \eqref{sigma12};
\item[(b)] for each $\gamma^s\in\Gamma^s$ and all $z\in\gamma^s$, 
$$
\Vert D_zf^{n}|T_z\gamma^s\Vert\leq(Cb)^{\frac{n}{2}}\quad \forall n\geq1,
$$
where $C>0$ is a constant independent of $\varepsilon$ and $b$;
\end{itemize}
\item[(P4)]{\rm  (Distortion control)} 
\begin{itemize}
\item[(a)]
for each $\gamma^u\in\Gamma^u$ and all $x,y\in\gamma\cap Q_{J}$,
$$
\log\frac{\|D_x f^{\tau(J)}|T_x\gamma^u\|}{ \|D_{y}f^{\tau(J)}|T_y\gamma^u\|}\leq C|f^{\tau(J)}x-f^{\tau(J)}y|,
$$
where $C>0$ is a constant independent of $\varepsilon$ and $b$;
\item[(b)] for each $\gamma^s\in\Gamma^s$ and all $x,y\in \Lambda\cap \gamma^s$,
$$
\|D_x f^n|T_x\gamma^u\|\leq 2\|D_y f^n|T_y\gamma^u\|\quad \forall n\geq1;
$$
\end{itemize}
\item[(P5)] 
Set $S(n):=\#\{J\in  S\colon \tau(J)=n\}$. Then
$$
\varlimsup_{n\to\infty}\frac{1}{n}\log S(n)\leq\varepsilon;
$$
\item[(P6)] any $\mu\in\mathcal M^e(f)$ with $h(\mu)\geq 2\varepsilon$ is liftable (in the sense of Sect.\ref{candy}) to the inducing scheme $(S,\tau)$.
\end{itemize}
\end{prop}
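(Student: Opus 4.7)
The plan is to construct the inducing scheme as a first return scheme to a Cantor-like subset of a rectangle $R$ that contains all the recurrent dynamics. First I would set up the geometry, building the families $\Gamma^u$ and $\Gamma^s$ from long unstable and stable pieces that cross $R$. Strong dissipation forces the stable family to be nearly vertical with contraction rate bounded by $\sqrt{Cb}$, giving (P3b) immediately; the unstable family, consisting of pieces of $W^u(P)$ (or unstable leaves through nonuniformly hyperbolic points), is nearly horizontal and expanded under forward iteration, with (P2) following from the standard stable-manifold/backward contraction theory available once the unstable bundle $E^u$ has been identified as in \cite{SenTak11}.

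Next I would build the basic elements $J$ as the maximal subsets of $\Lambda$ on which the itinerary to the next full crossing of the lattice is constant, with $\tau(J)$ defined as the corresponding first return time. This follows the Benedicks--Young scheme but adapted to the first bifurcation: orbits that return to a small neighborhood $V$ of the tangency too quickly are excluded, since no sufficiently long stable leaf can be grown through them, and what remains is Cantor-like. The fact that $f^{\tau(J)}\overline{J}$ is a full $u$-sublattice (P1) then comes from the construction once the cone conditions and the Markov property at the boundary (made of pieces of $W^s(P)$) are verified.

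The hyperbolicity estimate (P3a) is the crux of the analysis. Outside $V$, the map behaves essentially like a small dissipative perturbation of the Chebyshev polynomial $1-2x^2$ on an invariant one-dimensional factor, so derivatives along $\gamma^u$ are squeezed between $2-\varepsilon$ and $4+\varepsilon$ per iterate, which forces the product bound $\sigma_1^{\tau}\le\|Df^\tau|T\gamma^u\|\le\sigma_2^{\tau}$. For iterates passing close to $V$, one uses the binding argument of Benedicks--Carleson, as developed in \cite{SenTak11}, where a temporary "stable" direction constructed at each deep return is used to transfer the expansion of an auxiliary orbit back to the original one. The distortion bound (P4a) then follows by summing the $C^2$ distortion along the orbit, using exponential separation and the strong transversality between $E^u$ and $E^s$ (again guaranteed by dissipation); (P4b) is just the dominated-splitting version of the same argument.

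Finally, (P5) is a combinatorial count: basic elements with $\tau(J)=n$ are labelled by admissible symbolic strings of length $n$ encoding passages near $V$, and the neighborhood $V$ can be chosen thin enough so that the number of admissible strings grows at rate $\le e^{\varepsilon n}$. For (P6), I would combine (P5) with a Pliss-type selection argument: any ergodic $\mu$ with $h(\mu)\ge 2\varepsilon$ must charge a definite proportion of orbits that eventually have arbitrarily long periods of "controlled" recurrence to $\Lambda\setminus V$, forcing a first-return lift via the Zweim\"uller/Kac mechanism. \textbf{The principal obstacle} I expect is the verification of (P3a) and (P4a) along itineraries that make multiple close returns to $V$ before completing the induced step; the compensations from successive binding periods must be accumulated without degrading the final rate constants $\sigma_1,\sigma_2$, and it is precisely here that the non-Lipschitz curvature of $W^u$ and $W^s$ near the tangency threatens the distortion bound and has to be absorbed using the $b\to 0$ scaling.
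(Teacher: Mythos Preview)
Your outline for the construction of $\Lambda$, $\Gamma^u$, $\Gamma^s$ and the basic elements, and your arguments for (P1)--(P4), are essentially the paper's approach: $\Gamma^u$ is built from (limits of) components of $W^u\cap\Theta$, $\Gamma^s$ from long stable leaves through a slow-recurrence set $\Omega_\infty$, and (P3a)/(P4a) are obtained by writing $f^{\tau(J)}$ as a composition of first returns to $\Theta$, each governed by a Chebyshev-type expansion/distortion lemma (Lemma~\ref{markov}). Your worry about multiple deep returns degrading $\sigma_1,\sigma_2$ is in fact not the principal obstacle: once each first return to $\Theta$ satisfies $\sigma_1^n\le\|Df^n|E^u\|\le\sigma_2^n$ with bounded distortion, the composition inherits these bounds automatically. (P5) is also a combinatorial count as you say, though the paper's execution (Sublemma~\ref{card} and a Stirling estimate on the number of itineraries) is more delicate than ``choose $V$ thin enough.''

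The genuine gap is in (P6). A Pliss-type argument yields hyperbolic times along typical orbits, but it does not show that these orbits actually land in the Cantor set $\Lambda$, which is cut out by an infinite slow-recurrence condition; a priori a measure could sit entirely on the bad set $B=\{z:\tau(z)=\infty\}$ while still having plenty of Pliss times. The paper's mechanism is different and is where the real work lies: one proves that $B$, intersected with any unstable curve, has unstable Hausdorff dimension $\le\varepsilon$ (Lemmas~\ref{bad} and~\ref{esr0}). Then for $\mu$ with $h(\mu)\ge 2\varepsilon$, the Ledrappier--Young dimension formula gives $\dim^u(\mu)=h(\mu)/\lambda^u(\mu)>\varepsilon$, so the conditional measures $\mu^u_x$ along unstable manifolds cannot be supported on $B$; hence $\mu(B^c)>0$, $\mu(\Lambda)>0$, and Zweim\"uller/Kac finishes. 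This step critically uses that the stable holonomy between unstable leaves is bi-Lipschitz (a consequence of the one-dimensional bunching condition, cf.\ Remark~\ref{holono}), so that the dimension bound on $B$ transfers across the lamination. Neither the dimension estimate for $B$ nor the Lipschitz holonomy appears in your plan, and without them the liftability argument does not close.
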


The rest of this section is entirely devoted to a proof of Proposition \ref{lat}. 
Along the way we introduce large integers $\xi$, $N$ the purpose of which is as follows:
\begin{itemize}
\item $\xi$ determines the rate of approach of points in the lattice $\Lambda$ to critical zones around
$\zeta_0$ (see \eqref{sr}). We set
\begin{equation}\label{xii}
\xi=\left[\frac{10}{\varepsilon}\right];
\end{equation}

\item $N$ determines the size of a critical region $\Theta_0$ (See Sect.\ref{latt}.)
\end{itemize}
For any given $\varepsilon$ as in the statement of Proposition \ref{lat}, we may choose sufficiently large $N$ at the expense of reducing $b$. 
Any generic positive constant which is independent of $\varepsilon$, $N$, $b$ is denoted by $C$.

\subsection{The return map}\label{family}
For the Chebyshev quadratic polynomial $x\in[-1,1]\mapsto 1-2x^2$, the first return
map to $[-1/2,1/2]$ is uniformly expanding with controlled distortions \cite{Jak78}.
We prove an analogous statement for $f$.

Recall that $P$, $Q$ denote the fixed saddles near $(1/2,0)$ and $(-1,0)$ respectively.
If $f$ preserves orientation, let $W^u=W^u(Q)$. 
If $f$ reverses orientation, let $W^u=W^u(P)$.
By a {\it rectangle} we mean any
closed region bordered by two compact curves in $W^u$ and two in the
stable manifolds of $P$, $Q$. By an {\it unstable side} of a
rectangle we mean any of the two boundary curves in $W^u$. A {\it
stable side} is defined similarly.

Denote by $\hat\alpha_0^-$ the connected component of $W^s(Q)\cap \{(x,y)\in \mathbb R^2\colon |y|\leq b^{\frac{1}{4}}\}$ containing $Q$ and by $\hat\alpha_0^+$ the connected component of $f^{-1}\hat\alpha_0^-\cap \{(x,y)\in \mathbb R^2\colon |y|\leq b^{\frac{1}{4}}\}$ which does not contain $Q$. Let $\gamma_0$ denote the compact curve in $W^u$ with endpoints in $\hat\alpha_0^-$ and $\hat\alpha_0^+$ which contains the saddle in $W^u$. Let $R$ denote the 
rectangle bounded by $f\gamma_0$ and $\hat\alpha_0^\pm$ (see FIGURE 1).
One of the unstable sides of $R$ contains the point of tangency near $(0,0)$ denoted by $\zeta_0$ and $\alpha_0^+$ denote the stable side of $R$ which contains $f\zeta_0$. Let $\alpha_0^-$ denote the other stable side of $R$.

Define a sequence $\{\tilde\alpha_n\}_{n\geq0}$ of compact curves in $W^s(P)\cap R$ inductively as follows. First, let $\tilde\alpha_0$ be the component of $W^s(P)\cap R$ containing $P$. Given $\tilde\alpha_{n-1}$, define $\tilde\alpha_n$ to be one of the two components of $f^{-1}\tilde\alpha_{n-1}\cap R$ which is at the left of $\zeta_0$. Observe that  $\{\tilde\alpha_n\}$ accumulates on $\alpha_0^-$ from the right.

For each $n\geq0$, $f^{-2}\tilde\alpha_n\cap R$ consists of four curves, two of them at the left of $\zeta_0$ and two at the right. Let $\alpha_{n+1}^-$ denote the one which is not $\tilde\alpha_{n+2}$ and is at the left of $\zeta_0$. Among the two at the right of $\zeta_0$, let $\alpha_{n+1}^+$ denote the one which is at the left of the other.  Then $\{\alpha_n^-\}$ (resp. $\{\alpha_n^+\}$) accumulates the component of $W^s(Q)\cap R$ containing $\zeta_0$ from the 
left (resp. right). Observe that $\tilde\alpha_1=\alpha_1^-$ and $\tilde\alpha_0=\alpha_1^+$.
By definition, the curves obey the following diagram
$$\{\alpha_{n+1}^-,\alpha_{n+1}^+\}\stackrel{f^2}{\to}\tilde\alpha_n
\stackrel{f}{\to}\tilde\alpha_{n-1}
\stackrel{f}{\to}\tilde\alpha_{n-2}\stackrel{f}{\to}\cdots
\stackrel{f}{\to}\tilde\alpha_1=\alpha_1^-\stackrel{f}{\to}
\tilde\alpha_0=\alpha_1^+.
$$

By a \emph{$C^2(b)$-curve} we mean a compact, nearly horizontal $C^2$ curve such that the slopes of its tangent directions are $\leq\sqrt{b}$ and the
curvature is everywhere $\leq\sqrt{b}$.

Given a $C^2(b)$-curve $\gamma$ with endpoints in $\bigcup_{n\geq1}\alpha_n^+\cup\alpha_n^-$, we define a partition $\mathcal P(\gamma)$ of $\gamma$ into $C^2(b)$-curves, by intersecting it with the countable families $\{\alpha_n^+\}$, $\{\alpha_n^-\}$ of pieces of stable manifolds. This is feasible since each of these pieces intersects $\gamma$ exactly one point (See \cite[Remark 2.4]{Tak12}).

\begin{figure}
\begin{center}
\includegraphics[height=4cm,width=10cm]{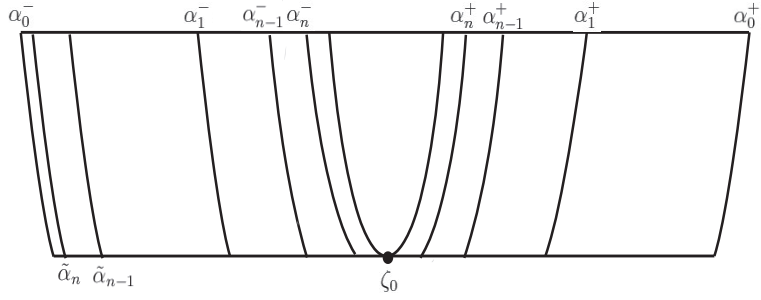}
\caption{The rectangle $R$ and the curves $\{\tilde\alpha_n\}$, $\{\alpha_n^{+}\}$, $\{\alpha_n^{-}\}$.
The $\{\tilde\alpha_n\}$ accumulate on the left stable side of $R$. Both $\{\alpha_n^{+}\}$ and $\{\alpha_n^{-}\}$ accumulate on the parabola in the stable manifold
containing the point of tangency $\zeta_0$ near $(0,0)$.}
\end{center}
\end{figure}

Let $\Theta$ denote the rectangle bordered by $\alpha_1^-$, $\alpha_1^+$ and the unstable sides of $R$. 

\begin{lemma}{\rm (\cite[Lemma 2.1]{SenTak11})}\label{component}
Each connected component of $\Theta\cap W^u$ is a $C^2(b)$-curve.
\end{lemma}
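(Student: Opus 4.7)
The plan is to exploit the strong dissipation ($0 < b \ll 1$) together with the specific geometry of the first bifurcation. The proof splits into a topological step and a geometric $C^2(b)$ estimate; I would do them in that order.

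First, the topology. A connected component $\gamma$ of $\Theta\cap W^u$ is a connected subset of the immersed $C^2$ one-manifold $W^u$, contained in the compact rectangle $\Theta$, hence compact. I would parametrize $\gamma$ by arc length and check that its endpoints lie on $\alpha_1^-\cup\alpha_1^+$: they cannot lie on the unstable sides of $R$, because those sides are themselves pieces of $W^u$, so a transverse meeting would let $\gamma$ be extended inside $\Theta$ contradicting maximality, while a tangential meeting would already force the local $C^2(b)$ shape via the shape of the unstable sides of $R$.

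Next the geometric bounds, which are the content of the lemma. The idea is backward iteration to a local unstable manifold followed by controlled push-forward. For a suitable $n\ge 0$, $f^{-n}\gamma$ lies in a small neighborhood of $Q$ (or $P$) on $W^u_{\mathrm{loc}}$, which, by the standard Hadamard--Perron construction for the Hénon saddles, is a $C^2$ graph over the horizontal with slope and curvature of order $\sqrt b$. I would then iterate forward and propagate the $C^2(b)$ property using the explicit form
\[
Df=\begin{pmatrix}-2ax&\sqrt b\\ \pm\sqrt b&0\end{pmatrix}.
\]
A direct computation shows that a tangent line of slope $\kappa=O(\sqrt b)$ at $(x,y)$ with $|x|$ bounded away from $0$ maps to a tangent of slope $\pm\sqrt b/(-2ax+\sqrt b\kappa)=O(\sqrt b)$, and a parallel Jacobi-type computation on second derivatives gives curvature $O(\sqrt b)$. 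Iterating, $f^{-n+k}\gamma$ stays in the $C^2(b)$ class for $0\le k\le n$ along orbits that avoid a thin vertical strip around the critical line $\{x=0\}$.

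The main obstacle is the component of $\Theta\cap W^u$ that passes through the tangency point $\zeta_0$, where the above induction breaks down because $x=0$. Here I would invoke the defining property of the first bifurcation: at $a=a^*$ the tangency between $W^u$ and $W^s(Q)$ at $\zeta_0$ is quadratic, and the parabolic piece of $W^s(Q)$ through $\zeta_0$ has a horizontal tangent at its apex; hence $W^u$ is horizontal to first order at $\zeta_0$, and its slope grows at most linearly in distance from $\zeta_0$. Combined with the observation that $\Theta$ has horizontal extent $O(\sqrt b)$ around $\zeta_0$ (the bounding curves $\alpha_1^\pm$ are preimages of pieces of $W^s(P)$ constrained to the thin strip $\{|y|\le b^{1/4}\}$ and approach the $W^s(Q)$-parabola at that rate), this gives slope and curvature $\lesssim\sqrt b$ even for this delicate component. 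Putting both cases together, every component $\gamma$ is a compact $C^2$ curve with slope and curvature $\le\sqrt b$, i.e.\ a $C^2(b)$-curve; the bookkeeping near $\zeta_0$ using the quadratic nature of the tangency together with the thinness of $\Theta$ is the essential point, while away from $\zeta_0$ the argument is a standard dissipative-Hénon slope-and-curvature induction.
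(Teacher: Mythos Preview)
The paper does not prove this lemma; it simply quotes \cite[Lemma 2.1]{SenTak11}. So there is no ``paper's proof'' to compare your sketch to. That said, your sketch contains a concrete error and, more importantly, misidentifies where the difficulty lies.

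The factual error: $\Theta$ does \emph{not} have horizontal extent $O(\sqrt b)$. Its stable sides are $\alpha_1^-=\tilde\alpha_1$ and $\alpha_1^+=\tilde\alpha_0$, the latter passing through $P\approx(1/2,0)$; so $\Theta$ spans roughly $[-1/2,1/2]$ horizontally. Your last-paragraph rescue for the component through $\zeta_0$ therefore collapses.

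The misdirected worry: the component through $\zeta_0$ is not the hard case. It is a piece of the unstable side of $R$, i.e.\ a sub-arc of $f\gamma_0$, and the point on $\gamma_0$ mapping to $\zeta_0\approx(0,0)$ has $|x|\approx 1/\sqrt{a}$, safely away from the critical line, so your own slope/curvature computation applies there without any special pleading.

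The genuine gap: you never address the infinitely many \emph{other} components of $\Theta\cap W^u$. These are produced by iterating earlier components forward \emph{through} the critical region. When a $C^2(b)$-curve $\gamma'\subset\Theta$ is pushed by $f$, the image $f\gamma'$ is a parabolic arc with a vertical tangent at the fold point near $(1,0)$; it is certainly not $C^2(b)$. Your induction hypothesis (``slope $O(\sqrt b)$ on the preimage'') is therefore lost after a single pass near $\zeta_0$, and nothing in your argument restores it. What is actually needed is to show that the fold sits outside $\Theta$ and that by the time each branch re-enters $\Theta$ it has been stretched enough that slope and curvature are again $\le\sqrt b$. This is exactly the content of the cited lemma in \cite{SenTak11}, and it uses the specific control on the critical orbit at $a=a^*$ (the tangency puts $\zeta_0\in W^s(Q)$, so the fold point shadows $Q$ and never revisits the critical region) together with the standard curvature propagation estimate, e.g.\ \cite[Lemma 2.4]{WanYou01}, rather than a naive one-step slope bound.
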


Let $\gamma$ be a connected component of $\Theta\cap W^u$.  
For each $n>1$ there is a unique element of $\mathcal P(\gamma)$ with endpoints in $\alpha_{n}^+, \alpha_{n-1}^+$ and a unique element with endpoints in $\alpha_{n-1}^-, \alpha_{n}^-$. No confusion will arise if we simplify notation by denoting both curves by $\gamma_n$.

\begin{lemma}\label{recommend}
For any component $\gamma$ of $\Theta\cap W^u$ and
each $\gamma_n\in\mathcal P(\gamma)$ the following holds:

\begin{itemize}
\item[(a)] $f^{i}\gamma_n\subset \overline{R\setminus\Theta}$ for every $1\leq i\leq n-1$;

\item[(b)] $f^n\gamma_n$ is a $C^2(b)$-curve in $\Theta$ with endpoints in the stable sides of $\Theta$
(See FIGURE 3).
\end{itemize}
\end{lemma}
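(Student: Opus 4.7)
The plan is to trace the forward orbits of the endpoints of $\gamma_n$ through the preimage diagram
$$\{\alpha_{n+1}^-,\alpha_{n+1}^+\}\stackrel{f^2}{\to}\tilde\alpha_n\stackrel{f}{\to}\tilde\alpha_{n-1}\stackrel{f}{\to}\cdots\stackrel{f}{\to}\tilde\alpha_1=\alpha_1^-\stackrel{f}{\to}\tilde\alpha_0=\alpha_1^+$$
recorded in the construction. For part (b), $\gamma_n$ has one endpoint on $\alpha_n^{\pm}$ and one on $\alpha_{n-1}^{\pm}$. Since $\alpha_k^{\pm}\subset f^{-2}\tilde\alpha_{k-1}$ and $\tilde\alpha_k\subset f^{-1}\tilde\alpha_{k-1}$, a direct iteration yields $f^n\alpha_n^{\pm}\subset\tilde\alpha_1=\alpha_1^-$ and $f^n\alpha_{n-1}^{\pm}\subset\tilde\alpha_0=\alpha_1^+$, so the two endpoints of $f^n\gamma_n$ sit on opposite stable sides of $\Theta$.

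To upgrade this to the statement that $f^n\gamma_n$ is a $C^2(b)$-curve inside $\Theta$, I would verify inductively on $i=1,\ldots,n$ that $f^i\gamma_n$ remains a $C^2(b)$-curve. The standard invariance-of-$C^2(b)$-cones argument (underlying Lemma \ref{component} and used in \cite{SenTak11}) applies provided each intermediate iterate stays uniformly away from a neighborhood of $\zeta_0$; this is ensured by the preceding computation, since the iterated endpoints lie on curves $\tilde\alpha_k$ with $k\geq 1$, all of which are bounded away from $\zeta_0$ by the choice of $R$. A nearly horizontal $C^2(b)$-curve whose endpoints lie on the two stable sides of the rectangle $\Theta$ must then be contained in $\Theta$.

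For part (a), the exclusion $f^i\gamma_n\cap\mathrm{int}\,\Theta=\emptyset$ for $1\leq i\leq n-1$ is essentially built into the inductive definition of the $\alpha_k^{\pm}$: if some $f^i\gamma_n$ intersected $\mathrm{int}\,\Theta$, then pulling back the corresponding crossing arc would produce a stable curve in $\gamma$ strictly between $\alpha_{n-1}^{\pm}$ and $\alpha_n^{\pm}$, contradicting that the partition $\mathcal{P}(\gamma)$ already refines $\gamma$ by \emph{all} curves in the families $\{\alpha_k^{\pm}\}$. The containment $f^i\gamma_n\subset R$ then follows topologically: the iterated endpoints lie on curves $\tilde\alpha_k$ (or first preimages thereof), which are contained in $R$, and the $C^2(b)$-curve $f^i\gamma_n$ joining them cannot cross the stable sides $\hat\alpha_0^{\pm}$ of $R$, nor exit through the unstable sides of $R$ without being forced to coincide along an arc of $W^u\cap\partial R$ that is incompatible with the inductive regularity of $f^i\gamma_n$.

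The main obstacle is the inductive propagation of the $C^2(b)$ property across the $n-1$ intermediate iterates traveling through $R\setminus\Theta$; this is where strong dissipation $b\ll 1$ and the uniform separation of the intermediate iterates from the critical locus $\zeta_0$ are decisive. Once this regularity control is in hand, both (a) and (b) reduce to elementary consequences of the preimage diagram.
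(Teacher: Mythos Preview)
Your endpoint tracking through the preimage diagram is correct and coincides with the paper's. The gap is in your inductive propagation of the $C^2(b)$ property. You write that the cone-invariance argument applies because ``each intermediate iterate stays uniformly away from a neighborhood of $\zeta_0$,'' but the obstruction is not at the intermediate iterates---it is at the very first step. For large $n$ the curve $\gamma_n$ itself lies at horizontal distance $d_n\to 0$ from $\zeta_0$, and applying $f$ to a nearly horizontal curve there produces tangent slopes of order $\sqrt{b}/d_n$, not $\leq\sqrt{b}$. Thus $f\gamma_n$ is \emph{not} a $C^2(b)$-curve and your induction never starts. Regaining $C^2(b)$ regularity after a passage near $\zeta_0$ is precisely the nontrivial bound-period analysis carried out in the appendix for Lemma~\ref{markov}; it is not free. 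Since your argument for (a) also leans on $f^i\gamma_n$ being $C^2(b)$, the gap propagates there as well.

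The paper sidesteps this entirely. For (a) it tracks the endpoints of $f^i\gamma_n$ to $\tilde\alpha_{n-i+1}$ and $\tilde\alpha_{n-i}$ and invokes \cite[Lemma~2.1]{SenTak11} to conclude that $f^i\gamma_n$ meets each of these stable curves in a single point; combined with $f^i\gamma_n\subset R$, this traps $f^i\gamma_n$ in the rectangle bordered by $\tilde\alpha_{n-i+1}$, $\tilde\alpha_{n-i}$ and the unstable sides of $R$, which lies in $\overline{R\setminus\Theta}$ for $1<i\leq n-1$. For (b), once the endpoints of $f^n\gamma_n$ are known to sit on $\alpha_1^{\pm}$, the curve $f^n\gamma_n$ is simply a connected component of $\Theta\cap W^u$, and Lemma~\ref{component} delivers the $C^2(b)$ conclusion directly---no step-by-step propagation through the intermediate iterates is needed.
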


\begin{proof}
We clearly have $f\gamma_n\subset \overline{R\setminus\Theta}$.
Let $1<i\leq n$.
The endpoints of $f^i\gamma_n$ are in $\tilde\alpha_{n-i+1}$, $\tilde\alpha_{n-i}$.
\cite[Lemma 2.1]{SenTak11} implies that the sets $f^i\gamma_n\cap \tilde\alpha_{n-i+1}$, 
$f^i\gamma_n\cap \tilde\alpha_{n-i}$ are singleton.
Since $f^i\gamma_n\subset R$, it follows that
$f^i\gamma_n$ is contained in the rectangle bordered by 
$\tilde\alpha_{n-i+1}$, $\tilde\alpha_{n-i}$ and the unstable sides of $R$.
Hence $f^{i}\gamma_n\subset \overline{R\setminus\Theta}$ and (a) holds.
(b) follows from \cite[Lemma 2.1]{SenTak11} which states that any  component of $\Theta\cap W^u$ is a $C^2(b)$-curve .
\end{proof}

The next lemma, the proof of which is given in Appendix A1, states that $f^n$ expands tangent vectors of $\gamma_n$ uniformly,
with controlled distortions.

\begin{lemma}\label{markov}
There exist $C>0$ and $N_0>0$ such that for any component $\gamma$
of $\Theta\cap W^u$ and each $\gamma_n\in\mathcal P(\gamma)$, $n>N_0$ we have:

\begin{itemize}
\item[(a)] for all $x\in\gamma_n$, $\sigma_1^n\leq\|D_xf^{n}|E^u_x\|\leq
\sigma_2^n$;

\item[(b)]

for all $x,y\in\gamma_n$,
$\displaystyle{\log\frac{\|D_x f^n|E^u_x\|}{\|D_{y}f^n|E^u_y\|}\leq
C|f^nx-f^ny|}.$
\end{itemize}
\end{lemma}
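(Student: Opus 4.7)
The plan is to exploit the fact that, because $b\ll 1$ and $a^*$ is close to $2$, the dynamics of $f$ along the unstable direction mimics the one-dimensional Chebyshev-like map $x\mapsto 1-a^*x^2$, and to use the hyperbolic saddle $Q$ (or $P$ in the orientation-reversing case) as the ``anchor'' replacing the Chebyshev fixed point at $-1$. The first step is to decompose the orbit of $\gamma_n$ using Lemma~\ref{recommend}: the arc $f\gamma_n$ lies just outside $\Theta$, sits within a $b^{1/4}$-neighborhood of $W^u(Q)$, and each subsequent image $f^i\gamma_n$ ($1\le i\le n-1$) lies in $\overline{R\setminus\Theta}$ and drifts away from $Q$ along $W^u(Q)$. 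At time $n$ the image $f^n\gamma_n$ is a $C^2(b)$-curve stretching fully across $\Theta$.

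For part (a), the plan is to split the product $\|D_xf^n|E^u_x\|=\|D_xf|E^u_x\|\cdot\prod_{i=1}^{n-1}\|D_{f^ix}f|E^u_{f^ix}\|$. Because tangent vectors to $W^u$ are $\sqrt b$-close to horizontal, a standard shadowing argument (as in \cite{BenCar91,MorVia93}) gives $\prod_{i=1}^{n-1}\|D_{f^ix}f|E^u_{f^ix}\|=|\lambda_Q|^{n-1}(1+O(b^{1/4}))^n$ uniformly on $\gamma_n$, where $|\lambda_Q|=2a^*+O(b)$. The delicate factor is $\|D_xf|E^u_x\|\approx 2a^*|x_0|$, which is small near the critical strip. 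The return-time constraint $\tau=n$ fixes $|x_0|$ to lie in a definite interval around $|\lambda_Q|^{-(n-2)/2}$: following $f^2x$, which lies at distance $\sim (2a^*)x_0^2$ from $Q$, forward along the linearization of $Q$, this distance grows by $|\lambda_Q|$ per iterate and must reach order $1$ precisely at time $n$. Combining the two factors yields $\|D_xf^n|E^u_x\|\asymp |\lambda_Q|^{n/2}$, which with the $O(b^{1/4})$ corrections and the bounded spread of $|x_0|$ over $\gamma_n$ absorbs into $\sigma_1^n\le\|D_xf^n|E^u_x\|\le\sigma_2^n$ once $n>N_0$.

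For part (b), the plan is the usual telescoping
\[
\log\frac{\|D_xf^n|E^u_x\|}{\|D_yf^n|E^u_y\|}=\sum_{i=0}^{n-1}\log\frac{\|D_{f^ix}f|E^u_{f^ix}\|}{\|D_{f^iy}f|E^u_{f^iy}\|}.
\]
For $1\le i\le n-1$ the points $f^ix,f^iy$ avoid the critical strip, so $\log\|Df|E^u\|$ is Lipschitz there; combined with the backward contraction $|f^ix-f^iy|\le C\sigma_1^{-(n-i)}|f^nx-f^ny|$ coming from part (a), the sum over $i\ge 1$ telescopes into a geometric series bounded by $C|f^nx-f^ny|$. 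The $i=0$ term is the only obstacle: with $\|D_xf|E^u_x\|\asymp |x_0|$ the summand is $\approx|\log(x_0/y_0)|$, which is not controlled by a Lipschitz bound alone. The rescue uses the identity $|f(x)-f(y)|\asymp 2a^*|x_0+y_0||x_0-y_0|$ from the 1D model, giving
\[
\Bigl|\log\frac{\|D_xf|E^u_x\|}{\|D_yf|E^u_y\|}\Bigr|\le \frac{|x_0-y_0|}{\min(|x_0|,|y_0|)}\le \frac{C|f(x)-f(y)|}{|x_0|^2};
\]
plugging in $|x_0|^2\gtrsim|\lambda_Q|^{-(n-2)}$ and $|f(x)-f(y)|\le C|\lambda_Q|^{-(n-1)}|f^nx-f^ny|$ makes this term $\le C|f^nx-f^ny|$ as well.

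The main obstacle is precisely this $i=0$ interaction between the near-critical behavior and the long escape from $Q$. The point of the argument is that the two pathologies cancel quantitatively: the return-time constraint forces $|x_0|$ to be of order $|\lambda_Q|^{-n/2}$, which is exactly enough to offset the almost-singular logarithm in both the expansion estimate (a) and the distortion estimate (b). The secondary technical point is to justify the shadowing near $Q$ rigorously, which requires controlling tangent directions along $f^i\gamma_n$ using the $C^2(b)$-curve property; this is where the strong dissipation $b\ll 1$ is indispensable, in contrast to \cite{LepRio05,LepRio09} where analogous estimates require hypotheses on expansion/contraction rates at the tangency.
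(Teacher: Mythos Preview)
Your one-dimensional intuition is correct, and for the Chebyshev polynomial itself the argument you sketch works. The gap is in the two-dimensional corrections during what the paper (following \cite{SenTak11}) calls the \emph{bound period}. You split off only the step $i=0$ and then treat $i\ge 1$ as if $E^u_{f^ix}$ were uniformly close to horizontal, so that ``$\log\|Df|E^u\|$ is Lipschitz outside the critical strip''. But $\|Df|E^u\|$ depends on the direction of $E^u$ as well as on the base point, and for $n$ large enough that $d(\zeta,\gamma_n)\lesssim\sqrt b$ the image direction $E^u_{fx}$ has slope of order $\sqrt b / d(\zeta,\gamma_n)$, which is unbounded in $n$. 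In particular $f\gamma_n$ is \emph{not} a $C^2(b)$-curve once $n$ exceeds a $b$-dependent threshold, so your appeal to ``controlling tangent directions along $f^i\gamma_n$ using the $C^2(b)$-curve property'' fails precisely where it matters. Similarly, your first-step estimate $\|D_xf|E^u_x\|\approx 2a^*|x_0|$ is only valid when $|x_0|\gg\sqrt b$; for deeper returns this factor saturates at order $\sqrt b$, and the deficit must be recovered during the next iterates, not from the asymptotic shadowing near $Q$.

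The paper's proof deals with this by (i) packaging the whole delicate initial segment into the bound period $p=p(z)$ and quoting the estimate $\|D_zf^{p}|E^u_z\|\ge(4-2\varepsilon)^{p/2}$ from \cite[Proposition~2.5]{SenTak11} for part~(a); and (ii) for part~(b), splitting $D_zf\,e^u(z)=A(z)\left(\begin{smallmatrix}1\\0\end{smallmatrix}\right)+B(z)e^s(fz)$ along the invariant stable foliation~$\mathcal F^s$ and estimating the four resulting cross-terms $I_1,\dots,I_4$ separately, together with auxiliary sublemmas controlling $\sum_i|f^ix-f^iy|$ and the aspect ratio $\ell(\gamma_n)/d(\zeta,\gamma_n)$. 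This foliation splitting is exactly what absorbs the near-vertical excursion of $E^u$ during the bound period; without it, or some equivalent device, your telescoping argument for~(b) breaks down already at $i=1$ whenever $d(\zeta,\gamma_n)\lesssim\sqrt b$.
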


\begin{figure}
\begin{center}
\includegraphics[height=5cm,width=10cm]{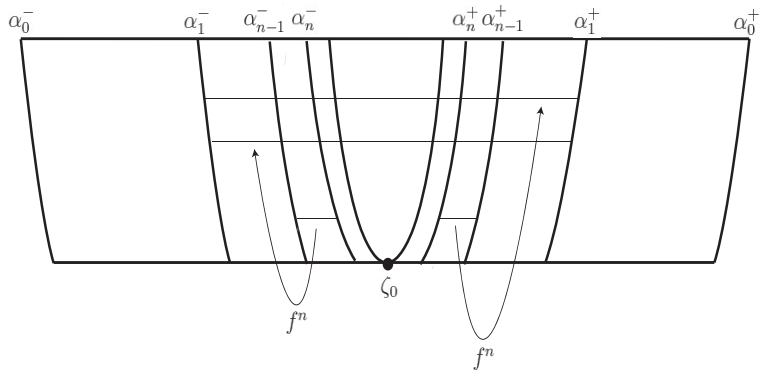}
\caption{The partition element(s) $\gamma_n$ is mapped by $f^n$ to a $C^2(b)$-curve connecting
the two stable sides of $\Theta$
(Lemma \ref{recommend}(b)).}
\end{center}
\end{figure}

Let $N\geq N_0$ (See Lemma \ref{markov}), and
let
$\Theta_0=\Theta_0(N)$ denote the rectangle bordered by $\alpha_{N}^-$,
$\alpha_{N}^+$ and the unstable sides of $\Theta$. 
Returns to the inside of $\Theta_0$ are treated by Lemma \ref{markov}.

\subsection{Construction of the lattice $\Lambda$}\label{latt}
We now construct a lattice $\Lambda$ defined by families
$\Gamma^u$, $\Gamma^s$
 of $C^1$ curves in $\Theta$.
\medskip

\noindent{\it Construction of $\Gamma^u.$}
Denote by 
$\tilde\Gamma^u$ the collection of connected components of $\Theta\cap W^u$. Define
$$\Gamma^u=\{\gamma^u\colon\text{$\gamma^u$ is the pointwise limit of
a sequence in $\tilde\Gamma^u$}\}.$$ Since elements of $\tilde\Gamma^u$ are $C^2(b)$-curves by Lemma \ref{component}, the pointwise
convergence is equivalent to the uniform convergence. Since curves in $\tilde\Gamma^u$ are pairwise disjoint,
the uniform convergence is equivalent to the $C^1$ convergence.
Hence, curves in $\Gamma^u$ are $C^1$ and the slopes of their
tangent directions are $\leq\sqrt{b}$.

\begin{remark}\label{intersec}
{\rm Several remarks are in order on what intersections are possible for curves in $\Gamma^u$
 (See FIGURE 4, and  also \cite[Lemma 4.3, Fig. 5]{BenVia06} for comparison):

\begin{itemize}
\item each $\gamma^u$-curve is the
(strictly) monotone  limit of curves in $\tilde\Gamma^u$. 
 For curves in  $\Gamma^u\setminus\tilde\Gamma^u$, this follows from the definition. 
For those in $\tilde\Gamma^u$, this follows from the Inclination Lemma (\cite[Proposition 6.2.23]{KatHas95}).
Hence, any connected component of the union of $\gamma^u$-curves contains at most two curves;

\item two intersecting $\gamma^u$-curves are tangent at every point of the intersection;

\item the backward contraction in Proposition \ref{lat}(P2) and that $f$ contracts area imply the following: if  $\gamma_1$, $\gamma_2\in\Gamma^u$ intersect each other,
then $\gamma_1\cap\gamma_2$ is connected;

\item this implies that there are at most countably many pairs that intersect each other.
\end{itemize}}

\end{remark}


\noindent{\it Construction of $\Gamma^s.$}
For each $k\geq 0$ let
$\Theta_k$ denote the rectangle bordered by $\alpha_{\xi k+N}^-$,
$\alpha_{\xi k+N}^+$ and the unstable sides of $\Theta$. Observe that
$$\Theta\supset\Theta_0\supset\Theta_1\supset\Theta_2\supset\cdots.$$

Let
$\gamma^u(\zeta_0)$ denote the lower unstable side of $\Theta$, which contains the point
$\zeta_0$ of tangency. Set
 $\Omega_0:=\overline{\gamma^u(\zeta_0)\setminus\Theta_0}$.
 For $n>0$ define the set of points whose recurrence rate to the region of tangency is slow
\begin{equation}\label{sr}
\Omega_n:=\overline{\{z\in\gamma^u(\zeta_0)\cap\Omega_{n-1}\colon f^nz\in R\setminus\Theta_{ n}\}},\end{equation} 
where $\xi$ is defined in \eqref{xii}.
Set $\Omega_\infty:=\bigcap_{n\geq0}\Omega_{n}$, which is the nonempty compact set (See Remark \ref{gap remark} below).
The fact that the rate at which the orbit of points in $\Omega_\infty$ returns close to the homoclinic tangency is slow will be fundamental in proving that the first return map to the set $\Lambda$ exhibits the properties mentioned in Proposition~\ref{lat}).

\begin{remark}\label{Komega}
{\rm Let $K=\{z\in\mathbb R^2\colon \text{$\{f^nz\}_{n\in\mathbb Z}$ is bounded}\}$.
It is immediate to check that $\Omega_\infty\subset K$.
Since $K=\Omega$ \cite[Section 3]{SenTak11}, $\Omega_\infty\subset\Omega$ holds.}
\end{remark}

By a {\it vertical $C^2(b)$-curve} we mean a compact, nearly vertical $C^2$ curve 
with endpoints
in the unstable sides of $\Theta$, and of the form
$$\{(x(y),y)\colon |x'(y)|\leq C\sqrt{b}, |x''(y)|\leq
C\sqrt{b}\}.$$
A vertical $C^2(b)$-curve $\gamma^s$ is called a {\it long stable leaf} if 
for any $x,y\in\gamma^s$, $|f^nx-f^ny|\leq \left(Cb\right)^{\frac{n}{2}}$
holds for every $n\geq0$.
The next
lemma is proved in Appendix A2.
The angle $\angle(\cdot,\cdot)$ between two one-dimensional tangent spaces is given 
by the (smaller) angle between their basis vectors.

\begin{lemma}\label{leaf}
For any $z\in\Omega_\infty$ 
there exists a unique long stable leaf $\gamma^s(z)$ through $z$. In addition, $\gamma^s(z)\subset\Theta$ and the following holds:
\begin{itemize}

\item[(a)] if $f^n(\gamma^s(z_1))\cap \gamma^s(z_2)\neq\emptyset$ for $n\geq 0$,
then $f^n\gamma^s(z_1)\subset\gamma^s(z_2)$.

\item[(b)]  $\|Df_x^n\left(\begin{smallmatrix}1\\0\end{smallmatrix}\right)\|\leq2\cdot 
\|Df_y^n\left(\begin{smallmatrix}1\\0\end{smallmatrix}\right)\|$
for all $x$, $y\in\gamma^s(z)$ and $n\geq0$; 

\item[(c)] if 
$x_1\in\gamma^s(z_1)$,
$x_2\in\gamma^s(z_2)$, then $\angle(T_{x_1}\gamma^s(z_1),T_{x_2}\gamma^s(z_2))\leq C\sqrt{b}|x_1-x_2|$. 

\end{itemize}
\end{lemma}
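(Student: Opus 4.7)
The plan is to construct $\gamma^s(z)$ by a graph transform adapted to the slow-recurrence structure encoded in the nested sets $\Omega_n$. For any $z\in\Omega_\infty$, the defining condition $f^nz\in R\setminus\Theta_{n}$ for every $n\geq 0$ means that $f^nz$ stays at distance at least of order $\sigma_1^{-(\xi n+N)}$ from the critical component of $W^s(Q)$ through $\zeta_0$. Combined with the strong dissipation ($|\det Df|$ of order $b$) and the choice \eqref{xii} of $\xi$, this yields, at each iterate $f^nz$, a stable cone field which is mapped into itself by $Df$ and uniformly contracted at rate $(Cb)^{1/2}$: any vertical $C^2(b)$-curve through $f^nz$ is mapped by $f$ to a vertical $C^2(b)$-curve through $f^{n+1}z$ of controlled length.

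With this cone field in hand, I would start from an arbitrary vertical $C^2(b)$-curve through $z$ joining the two unstable sides of $\Theta$ and extract a Cauchy sequence of admissible curves, whose $C^1$-limit is the desired $\gamma^s(z)\subset\Theta$, in the spirit of the Hadamard graph transforms used in \cite{BenCar91,MorVia93,BenYou00}. Uniqueness of $\gamma^s(z)$ then follows because any two vertical $C^2(b)$-curves through $z$ on which $|f^nx-f^ny|\leq(Cb)^{n/2}$ holds for all $n$ must agree: a positive transverse distance at $z$ would be preserved, up to the $C^2(b)$ control, under forward iteration, contradicting the contraction estimate.

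For part (a), the image $f^n\gamma^s(z_1)$ inherits the long stable leaf property of $\gamma^s(z_1)$, and any common point $w\in f^n\gamma^s(z_1)\cap\gamma^s(z_2)$ has two vertical $C^2(b)$-graphs through it both satisfying the long stable property; the same uniqueness argument as above forces $f^n\gamma^s(z_1)\subset\gamma^s(z_2)$. Part (b) reduces to a Koebe-type distortion estimate: for $x,y\in\gamma^s(z)$, $\log\|Df^n_xe_1\|-\log\|Df^n_ye_1\|$ is bounded by a sum over $k<n$ of $\|D^2f\|$ times $|f^kx-f^ky|\leq(Cb)^{k/2}|x-y|$, and the geometric convergence yields a bound smaller than $\log 2$ once $b$ is small. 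Part (c) is the $C^1$-dependence of stable leaves on their base points; it follows from the uniform $C^2(b)$-control preserved along the graph transform, which forces the tangent lines at $x_1$ and $x_2$ to converge, at the same exponential rate, to a common forward-invariant direction, yielding a modulus of continuity of order $\sqrt{b}\cdot|x_1-x_2|$.

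The main obstacle will be the cone invariance, i.e.\ quantifying precisely how the slow-recurrence condition $f^nz\notin\Theta_n$ prevents the tangencies near $\zeta_0$ from destroying the stable cone structure. This is where the value $\xi=[10/\varepsilon]$ from \eqref{xii} is decisive: the deterioration factor of the derivative picked up at the $n$-th close pass to the critical curve is comparable to an inverse power of the distance, and $\xi$ must be large enough that this factor, multiplied against the dissipation $b$, is still contracting. Once this quantitative estimate is in place, properties (a)--(c) follow from the standard graph transform machinery, and the curves $\alpha_{\xi n+N}^\pm$ can be used as natural boundaries guaranteeing that $\gamma^s(z)$ remains inside $\Theta$ at every step of the construction.
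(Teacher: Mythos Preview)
Your outline is in the right spirit---the stable leaves are indeed built by a graph-transform argument, and the paper ultimately invokes exactly the machinery of \cite[Sect.~6, Sect.~7C]{MorVia93} and \cite[Proposition~2.4]{BenVia01} that you cite for parts (b) and (c). However, you misidentify the key quantitative input. The claim that a vertical $C^2(b)$-curve through $f^nz$ is mapped by $f$ to a vertical $C^2(b)$-curve through $f^{n+1}z$ is simply false at returns to the critical region: near $\zeta_0$ the map folds vertical curves to nearly horizontal ones, so there is no naively forward-invariant stable cone. Your proposed fix (``$\xi$ large enough that the deterioration factor times $b$ is still contracting'') is not the mechanism either, since $b$ is chosen last and can absorb any fixed constant.

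What the paper actually proves first is an \emph{unstable} estimate: for every $z\in\Omega_\infty$ and all $n\geq1$, $\|D_zf^n|E^u_z\|\geq\kappa^n$ with $\kappa=5^{-(1+\xi)N}$. This is obtained via the bound/free decomposition of \cite{SenTak11}; the slow-recurrence condition $f^nz\notin\Theta_n$ enters only to bound the bound period $p$ at a critical return by $p\leq\xi m+N$ (where $m$ is the free time preceding the return), so that the loss over a bound segment is at most $5^{-p}\geq\kappa^n$. Once this lower bound on unstable growth is in hand, the area contraction $|\det Df|=b$ forces contraction in the complementary direction at rate $(Cb)^{n/2}$, and the Mora--Viana graph transform applies. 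For uniqueness and part (a), the paper does not argue by ``preserved transverse distance'' (which again fails at folds) but by a concrete contradiction: if two leaves met without coinciding, one joins nearby points on them by a short horizontal segment, picks a large $M$ with $f^Mz_2$ free, and compares the expansion $\geq C\sigma_1^M b^{M/3}$ of that segment against the contraction $\leq 2(Cb)^{M/2}$ along the leaves. Your sketch would be repaired by replacing the cone-invariance step with this unstable expansion lemma.
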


Define
$$\Gamma^s=\{\gamma^s(z)\colon z\in\Omega_\infty\},$$
where $\gamma^s(z)$ is the long stable leaf through $z$ in Lemma \ref{leaf}.

\subsection{Construction of an induced map on $\Lambda$}
\label{induced}
Consider the lattice $\Lambda$ defined by $\Gamma^u$ and $\Gamma^s$:
$\Lambda=\{\gamma^u\cap\gamma^s\colon
\gamma^u\in\Gamma^u,\gamma^s\in\Gamma^s\}$.
Let
$$ \mathcal W^s=\bigcup_{\gamma^s\in\Gamma^s}\gamma^s.$$

Define a first entry time $\tau\colon \Omega\to\mathbb N\cup\{\infty\}$ to $\Lambda$ by
$$\tau(z)=\inf\left(\{n>0\colon
f^nz\in\Lambda\}\cup\{\infty\}\right).$$ 
\cite[Lemma 2.2]{SenTak11} implies $\Omega\cap\mathcal W^s\subset\Lambda$,
and so $\tau(z)=\inf\left(\{n>0\colon
f^nz\in\mathcal W^s\}\cup\{\infty\}\right).$



\begin{figure}
\begin{center}
\includegraphics[height=4cm,width=6cm]
{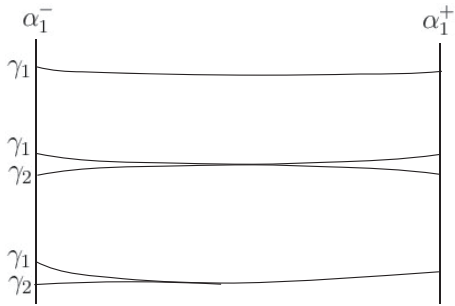}
\caption{The curves in $\Gamma^u$.}
\end{center}
\end{figure}

\begin{lemma}\label{Q}
There exists a collection $\mathcal Q$ of pairwise disjoint subsets of
$\Omega_\infty$ such that:

\begin{itemize}
\item[(a)] $\bigcup_{\omega\in\mathcal Q}\omega=\{z\in\Omega_\infty\colon \tau(z)<\infty\}$;

\item[(b)] $\tau$ is constant on
each $\omega\in\mathcal Q$ not intersecting $\alpha_1^-\cup\alpha^+$ (denote this value by $\tau(\omega)$.
For all other $\omega\in\mathcal Q$, let $\tau(\omega)=2$);

\item[(c)] for each $\omega\in\mathcal Q$ there exists 
$\gamma\in\tilde\Gamma^u$ such that $f^{\tau(\omega)}\overline{\omega}=\gamma\cap\mathcal W^s$.

\item[(d)] for each $\omega\in\mathcal Q$,
$\overline{\omega}\setminus\omega\subset W^s(P)\setminus \{\alpha_1^+\}$.
\end{itemize}
\end{lemma}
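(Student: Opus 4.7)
My construction defines $\mathcal Q$ by running the first-entry partition of $\gamma^u(\zeta_0)$ into the pieces $\gamma_n\in \mathcal P(\gamma^u(\zeta_0))$ and then pulling back the stable foliation under $f^n$. First I note that since $\Omega\cap\mathcal W^s\subset\Lambda$ and $\Omega_\infty\subset\Omega$, for $z\in\Omega_\infty$ the entry time satisfies $\tau(z)=\inf\{n>0 : f^n z\in\mathcal W^s\}$. Every such $z$ belongs to a unique partition element $\gamma_n$; by Lemma \ref{recommend}(a), $f^iz\in\overline{R\setminus\Theta}$ for $1\le i\le n-1$, and since $\mathcal W^s\subset\Theta$ this forces $\tau(z)\ge n$.

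For each generic element $\gamma_n$ (with $n\ge 2$; there are at most two, one on each side of $\zeta_0$), set $\gamma:=f^n\gamma_n$. Lemma \ref{recommend}(b) shows $\gamma\in\tilde\Gamma^u$ is a $C^2(b)$-curve connecting the two stable sides of $\Theta$, and Lemma \ref{markov} guarantees $f^n|_{\gamma_n}$ is a homeomorphism. I then define
\[
\omega:=\bigl(f^n|_{\gamma_n}\bigr)^{-1}\bigl(\gamma\cap\mathcal W^s\bigr).
\]
Property (c) is immediate, (b) follows because every $z\in\omega$ has $\tau(z)=n$ exactly, and (a) together with pairwise disjointness follow since the $\gamma_n$'s partition $\gamma^u(\zeta_0)$ and any $z\in\Omega_\infty$ with $\tau(z)<\infty$ lies in the $\omega$ corresponding to its unique partition element. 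The special elements $\gamma_1$ adjacent to $\alpha_1^\pm$ are handled separately: the choice $n=1$ fails because $f\gamma_1$ hugs $W^s(P)$, so I instead use $\tau(\omega)=2$ with $\gamma:=f^2\gamma_1$ in the same preimage formula.

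The main point requiring justification is the inclusion $\omega\subset\Omega_\infty$, which is also where (d) is ultimately checked. For $z\in\omega$, the image $w:=f^nz$ lies on a long stable leaf $\gamma^s(y)$ with $y\in\Omega_\infty$, and Lemma \ref{leaf} yields uniform exponential contraction
\[
|f^{n+k}z-f^ky|\le (Cb)^{k/2}|w-y|\qquad\text{for all }k\ge 0.
\]
Since $y$ satisfies the defining slow-recurrence condition $f^ky\in R\setminus\Theta_k$ of $\Omega_\infty$ and the nested $\Theta_k$ shrink at the geometric rate dictated by the gap $\xi$ in \eqref{xii}, the exponential contraction beats the shrinkage of $\Theta_k$, and the slow-recurrence condition lifts from $y$ to $z$ from time $n$ onward; for $1\le k\le n-1$ the condition already holds because $f^kz\in\overline{R\setminus\Theta}\subset R\setminus\Theta_k$. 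Finally, $\overline\omega\setminus\omega$ consists of the endpoints of $\gamma_n$, lying on $\alpha_{n-1}^\pm\cup\alpha_n^\pm\subset W^s(P)$ (none of which equals $\alpha_1^+$ once $n\ge 2$), together with the gap boundaries of the Cantor structure $\gamma\cap\mathcal W^s$, whose images are limit long stable leaves contained in $W^s(P)$ by the very construction of $\Gamma^s$ as limits of preimages of pieces of $W^s(P)$. I expect the quantitative step $\omega\subset\Omega_\infty$ to be the principal technical obstacle, as it pins down the relation between $\xi$, $N$ and $b$; the rest is bookkeeping.
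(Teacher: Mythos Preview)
Your construction captures only the \emph{first step} of what the paper does inductively, and as written property (a) fails. You define $\omega\subset\gamma_n$ as the preimage of $f^n\gamma_n\cap\mathcal W^s$, so $z\in\omega$ forces $\tau(z)=n$. But many points $z\in\Omega_\infty\cap\gamma_n$ have $\tau(z)>n$: these are precisely the points whose image $f^nz$ lands in a \emph{gap} of $\mathcal W^s$ rather than in $\mathcal W^s$ itself. By Lemma~\ref{gaplem}, such a point stays out of $\mathcal W^s$ for at least the order $g$ of that gap, and only after a further first-return time (to $\Theta$) from $f^{n+g}z$ can it possibly enter $\mathcal W^s$; and at that stage it may again fall into a gap, and so on. Your collection therefore misses every $z\in\Omega_\infty$ with $\tau(z)$ strictly larger than its first-return time to $\Theta$, so $\bigcup_\omega\omega\subsetneq\{z\in\Omega_\infty:\tau(z)<\infty\}$. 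Note also that only the $\gamma_n$ with $2\le n\le N$ meet $\Omega_\infty$ (since $\Omega_\infty\subset\Omega_0=\overline{\gamma^u(\zeta_0)\setminus\Theta_0}$), so your $\mathcal Q$ produces only finitely many return times, whereas the inducing scheme must have $\tau$ unbounded (cf.\ (P5) and the tail estimates in Sect.~\ref{small}).

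The paper's proof repairs exactly this: after the first batch of elements is defined, it takes each gap $G$ of $\mathcal W^s$ of order $g$, iterates $g$ more steps so that $f^{\tau(\omega)+g}$ maps the relevant piece across $\Theta$, and then repeats the partition-and-pullback step (Sublemma~\ref{match2}). This recursion, justified by Lemma~\ref{gaplem}, is what produces elements with arbitrarily large $\tau(\omega)$ and makes (a) hold. Two further points: your metric argument for $\omega\subset\Omega_\infty$ (``contraction beats shrinkage of $\Theta_k$'') is shakier than you suggest, since $f^ky$ may sit arbitrarily close to $\partial\Theta_k$; the paper instead uses the topological fact that $f^n\gamma^s(y)$ cannot cross the stable sides $\alpha_{\xi k+N}^\pm\subset W^s(P)$. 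And your claim in (d) that the gap boundaries of $\gamma\cap\mathcal W^s$ lie in $W^s(P)$ is false: boundary leaves of $\mathcal W^s$ are long stable leaves through points of $\Omega_\infty$, not pieces of $W^s(P)$. The paper obtains (d) by explicitly removing preimages of $\alpha_1^-\cup\alpha_1^+$ at each inductive step, which simultaneously ensures disjointness and forces $\overline\omega\setminus\omega\subset W^s(P)\setminus\{\alpha_1^+\}$.
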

We finish the construction of an induced map assuming the conclusions of  Lemma \ref{Q}.
For each $\omega\in\mathcal Q$ in Lemma \ref{Q}, consider the $s$-sublattice of $\Lambda$
defined by the families $\Gamma^u$ and $\{\gamma^s(z)\colon z\in\omega\}$.
Define $\hat S $ to be the collection of
these $s$-sublattices of $\Lambda$.
Let $$B=\{z\in \Omega\colon \tau(z)=\infty\},$$ 
and set
$$\Lambda_B:=\Lambda\cap B.$$
We do not know if $\Lambda_B=\emptyset$.
Since elements of $\hat S$ are $s$-sublattices of $\Lambda$, so is $\Lambda_B$ unless it is an empty set.
One can show that
$\Lambda\setminus\Lambda_B$  is dense in $\Lambda$.

\begin{remark}
{\rm Proposition~\ref{lat} establishes that the first return map of points of $\Lambda$ to itself has good hyperbolic and distortion properties, which allows us to apply the thermodynamical formalism. However, there are non-wandering points whose forward orbits never enter $\Lambda$. The set of all such points is denoted by $B$ and one needs to control its size (in terms of Hausdorff dimension) in order to show that it does not support any equilibrium measures with large entropy.}
\end{remark}

\begin{cor}\label{corQ}
The following holds:
\begin{itemize}

\item[(a)] $\bigcup_{I\in\hat S} I=\Lambda\setminus \Lambda_B$;

\item[(b)] $\tau$ is constant on each $I\in\hat S$ not intersecting $\alpha_1^-\cup\alpha_1^+$ (denote this value by $\tau(I)$.
For all other $I\in\hat S$, let $\tau(I)=2$);

\item[(c)] for each $I\in\hat S$, $f^{\tau(I)}\overline{I}$ is a
$u$-sublattice of $\Lambda$;

\item[(d)]  for each $I\in\hat S$,
$\overline{I}\setminus I\subset W^s(P)\setminus \{P\}$.

\end{itemize}
\end{cor}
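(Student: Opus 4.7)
The plan is to transfer each property of the one-dimensional elements $\omega \in \mathcal Q \subset \gamma^u(\zeta_0) \cap \Omega_\infty$ given in Lemma~\ref{Q} to the two-dimensional $s$-sublattice $I \in \hat S$ obtained from $\omega$ via the canonical bijection $z \mapsto \{\gamma^u \cap \gamma^s(z) : \gamma^u \in \Gamma^u\}$. The technical engine is the observation that the first entry time $\tau$ is constant along every long stable leaf $\gamma^s \in \Gamma^s$: if $x,y$ share such a leaf then $|f^n x - f^n y| \le (Cb)^{n/2}$ for every $n$, and Lemma~\ref{leaf}(a) ensures that as soon as $f^n x$ lies in $\mathcal W^s$ all of $f^n \gamma^s(z)$ sits inside one leaf of $\Gamma^s$, so $f^n y \in \mathcal W^s$ as well; hence $\tau(x) = \tau(y)$.

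Granted this, (a) is immediate: under the bijection above, $\bigcup_{I \in \hat S} I$ is carried to $\bigcup_{\omega \in \mathcal Q} \omega = \{z \in \Omega_\infty : \tau(z) < \infty\}$ by Lemma~\ref{Q}(a), and stable-leaf invariance of $\tau$ shows this set lifts exactly to $\Lambda \setminus \Lambda_B$. For (b), constancy of $\tau$ on $\omega$ (Lemma~\ref{Q}(b)) propagates along stable leaves to constancy on $I$; since $\alpha_1^-$ and $\alpha_1^+$ are long stable leaves belonging to $\Gamma^s$, $I$ meets $\alpha_1^- \cup \alpha_1^+$ precisely when $\omega$ meets them on $\gamma^u(\zeta_0)$. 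For (d), the closure $\overline I \setminus I$ picks up the stable leaves through $z \in \overline\omega \setminus \omega$, which by Lemma~\ref{Q}(d) lie in $W^s(P) \setminus \{\alpha_1^+\}$; uniqueness in Lemma~\ref{leaf} identifies the long stable leaf through a point of $W^s(P)$ with the local piece of $W^s(P)$, giving $\overline I \setminus I \subset W^s(P)$, and the excluded leaf $\alpha_1^+$ corresponds precisely to the excluded fixed point $P$.

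The main step is (c). By Lemma~\ref{Q}(c), $f^{\tau(\omega)} \overline\omega = \gamma \cap \mathcal W^s$ with $\gamma \in \tilde\Gamma^u \subset \Gamma^u$. The lattice property of Definition~\ref{deflattice} forces every $\gamma^s \in \Gamma^s$ to meet $\gamma$ at exactly one point, so $\gamma \cap \mathcal W^s$ hits every leaf of $\Gamma^s$ once, and Lemma~\ref{leaf}(a) then shows that $f^{\tau(\omega)}$ carries the $s$-family $\{\gamma^s(z) : z \in \omega\}$ defining $I$ bijectively onto all of $\Gamma^s$. On the unstable side, each $\gamma^u \in \Gamma^u$ is sent by $f^{\tau(\omega)}$ to a $W^u$-curve whose component inside $\Theta$ is again a member of $\Gamma^u$, since $\Gamma^u$ was constructed as the pointwise closure of the family $\tilde\Gamma^u$ of components of $\Theta \cap W^u$ and is preserved under such transport. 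Combining these two facts identifies $f^{\tau(I)} \overline I$ as a $u$-sublattice of $\Lambda$.

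The delicate part, and the main potential obstacle, is the unstable-direction claim in (c): verifying that every component of $f^{\tau(\omega)}(\gamma^u) \cap \Theta$ is genuinely a member of $\Gamma^u$ rather than a spurious $W^u$-piece incompatible with the lattice structure. This is dispatched by combining the closure definition of $\Gamma^u$, the homeomorphism extension built into the inducing scheme through Lemma~\ref{Q}(c), and the backward-contraction property already inherent in $W^u$, all of which force the transported $u$-curves to stretch consistently across $\Theta$ and thus land inside $\Gamma^u$.
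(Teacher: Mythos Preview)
Your proposal is correct and follows essentially the same approach as the paper, whose proof is the single line ``From Proposition~\ref{Q} and Lemma~\ref{leaf}(a).'' You have correctly identified that the key mechanism is the constancy of $\tau$ along long stable leaves (via Lemma~\ref{leaf}(a)), and you transfer each item of Lemma~\ref{Q} from $\omega\subset\gamma^u(\zeta_0)$ to the corresponding $s$-sublattice $I$ exactly as intended. Two minor remarks: in (c), your phrase ``$W^u$-curve'' is slightly loose since curves in $\Gamma^u\setminus\tilde\Gamma^u$ need not lie in $W^u$---the cleaner argument is that for $\gamma^u\in\tilde\Gamma^u$ the image piece lands in $\tilde\Gamma^u$ by construction of the return map, and for general $\gamma^u\in\Gamma^u$ one passes to the limit using the closure definition; and in (d), the point is simply that $P\in\alpha_1^+=\tilde\alpha_0$, so excluding the leaf $\alpha_1^+$ automatically excludes $P$ from the union of remaining leaves.
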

\begin{proof}
From Proposition \ref{Q} and Lemma \ref{leaf}(a).
\end{proof}

\begin{figure}
\begin{center}
\includegraphics[height=4cm,width=16cm]{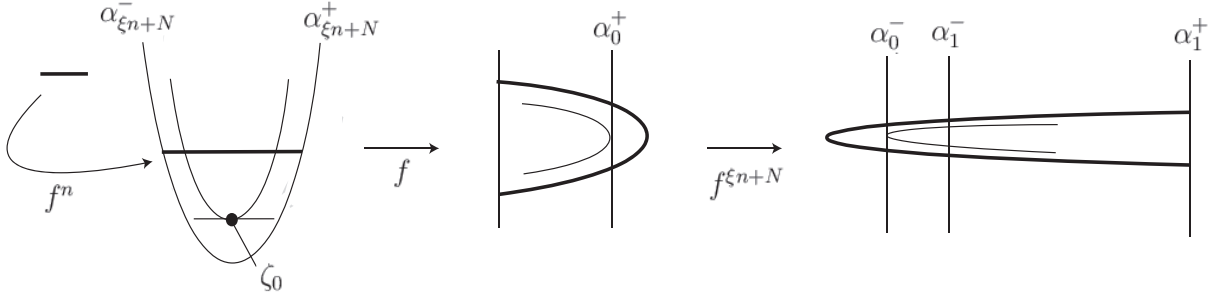}
\caption{The closure of a gap $G$ of $\Omega_\infty$ of order $n$ is mapped by $f^n$ to a $C^2(b)$-curve
connecting the two stable sides of $\Theta_{n}$. Hence $f^{n+1}G$ is folded. $f^{n+i}G\subset \overline{R\setminus\Theta}$ holds
for every $1\leq i\leq\xi n+N$.}
\end{center}
\end{figure}

In order to prove Lemma \ref{Q} we need some preliminary considerations on the geometry 
of $\Omega_\infty$ and $\mathcal W^s$.
\begin{definition}
{\rm The set $\gamma^u(\zeta_0)\setminus\Omega_0$ is called a gap of order $0$.  For each $n\geq1$, any connected component of
$\Omega_{n-1}\setminus\Omega_n$ is called a {\it gap of $\Omega_\infty$ of order
$n$}.

For each $n\geq0$,  by a {\it gap of $\mathcal W^s$
of order $n$} we mean any rectangle bordered by the closure of a gap of
$\Omega_\infty$ of order $n$, a segment in the upper unstable side
of $\Theta$, and two long stable leaves joining their endpoints.}
\end{definition}


\begin{remark}\label{gap remark}
{\rm By construction, the following holds:
\begin{itemize}


\item If $A$ is a component of $\Omega_{n-1}$, then the $f^n$-images of the endpoints of $A$
are contained in $\overline{R\setminus\Theta}$. In particular, any gap of $\Omega_\infty$ of order $n$ intersecting $A$ is strictly contained in $A$. Hence, $\Omega_n\neq\emptyset$ and as a result $\Omega_\infty$ is a nonempty compact set.

\item the closure of each gap of $\Omega_\infty$ of order $n$ is sent by $f^n$ diffeomorphically 
onto a $C^2(b)$-curve connecting the two stable sides of $\Theta_{n}$ (See FIGURE 5).

\item Each gap of $\mathcal W^s$ of order $n\geq0$ is mapped by $f^n$
to a rectangle whose stable sides are in $\alpha_{\xi n+N}^\pm$ and whose unstable sides are $C^2(b)$-curves in $W^u$.
(See FIGURE 6).
\end{itemize}}
\end{remark}

\begin{lemma}\label{gaplem}
Let $G$ be a gap of order $g$. Then for $0\leq i\leq g$,
$f^iG\cap\mathcal W^s=\emptyset$.
\end{lemma}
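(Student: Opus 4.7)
The plan is to suppose for contradiction that $f^i z\in\gamma^s(w)$ for some $z\in G$, some $0\leq i\leq g$, and some $w\in\Omega_\infty$, and to rule out each value of $i$ separately. The case $i=0$ is dispatched by transversality: the vertical $C^2(b)$-curve $\gamma^s(w)$ meets the nearly horizontal $\gamma^u(\zeta_0)$ in the single point $w$, so $z=w$ would lie in $\Omega_\infty$, contradicting $z\in G\subset(\Omega_{g-1}\setminus\Omega_g)\cup(\gamma^u(\zeta_0)\setminus\Omega_0)$, which is disjoint from $\Omega_g\supset\Omega_\infty$.

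For $1\leq i\leq g-1$ I would argue quantitatively. By the contraction along long stable leaves (Lemma \ref{leaf}),
\[
|f^g z - f^{g-i} w|\leq (Cb)^{(g-i)/2}\cdot\mathrm{diam}(\gamma^s(w)).
\]
However, $f^g z\in\mathrm{int}(\Theta_g)$ by the defining property of a gap of order $g$, while $f^{g-i}w\notin\mathrm{int}(\Theta_{g-i})$ since $w\in\Omega_\infty$. These two points are therefore separated horizontally by at least the width of the strip $\Theta_{g-i}\setminus\Theta_g$, which is in turn controlled by the geometric rate at which the curves $\alpha_n^\pm$ approach the stable parabola through $\zeta_0$. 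This rate is set by the backward expansion at $P$ and is independent of $b$. For $b$ small enough relative to $\xi=[10/\varepsilon]$, $(Cb)^{(g-i)/2}$ is far smaller than this strip width, yielding the contradiction.

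For $i=g$ I would argue topologically. Since $w\in\Omega_\infty\subset\Omega_0$, the point $w$ lies outside $\mathrm{int}(\Theta_g)$ on $\gamma^u(\zeta_0)$, and for $g\geq 1$ it is also off $\alpha_{\xi g+N}^\pm$ (these curves meet $\gamma^u(\zeta_0)$ strictly inside $\Theta_0$, hence outside $\Omega_0$). The key step is to check that $\gamma^s(w)\cap\alpha_{\xi g+N}^\pm=\emptyset$: any common point would have forward orbit converging to $P$ (by $\alpha_{\xi g+N}^\pm\subset W^s(P)$) while shadowing that of $w$ (by membership in $\gamma^s(w)$), forcing $w\in W^s(P)$; the uniqueness asserted in Lemma \ref{leaf} would then realize $\gamma^s(w)$ as a component of $W^s(P)\cap\Theta$ distinct from $\alpha_{\xi g+N}^\pm$, and distinct such components are disjoint. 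Connectedness of $\gamma^s(w)$ then places the whole leaf on the side of $\alpha_{\xi g+N}^\pm$ where $w$ sits, outside $\Theta_g$, contradicting $f^g z\in\mathrm{int}(\Theta_g)\cap\gamma^s(w)$.

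The main obstacle is the intermediate range $1\leq i\leq g-1$: one must bound the strip width of $\Theta_{g-i}\setminus\Theta_g$ from below by a decay rate slower than $\sqrt{b}^{\,g-i}$, uniformly in $g-i$. This hinges on the interplay between $\xi=[10/\varepsilon]$, the backward expansion at $P$, and the freedom to reduce $b$ granted by Proposition \ref{lat}.
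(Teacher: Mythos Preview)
Your quantitative argument for the intermediate range $1\leq i\leq g-1$ has a genuine gap. You want to separate $f^g z\in\mathrm{int}(\Theta_g)$ from $f^{g-i}w\in\overline{R\setminus\Theta_{g-i}}$ by the width of the strip $\Theta_{g-i}\setminus\Theta_g$, and then beat this width by $(Cb)^{(g-i)/2}$. But the strip width is the distance between the curves $\alpha_{\xi(g-i)+N}^\pm$ and $\alpha_{\xi g+N}^\pm$, and this depends on \emph{both} $g$ and $g-i$, not on $g-i$ alone. Since the entire family $\{\alpha_m^\pm\}$ accumulates on the stable parabola through $\zeta_0$ (indeed $\mathrm{dist}(\alpha_m^\pm,\text{parabola})\asymp \sigma^{-m}$ by Lemma~\ref{markov}), for a fixed value of $k=g-i$ the strip width is of order $\sigma^{-\xi(g-k)-N}$, which tends to zero as $g\to\infty$. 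Your competing term $(Cb)^{k/2}$ is a fixed constant independent of $g$, so no choice of $b$ can make it smaller than the strip width for all $g$ simultaneously. The ``main obstacle'' you flag is not merely a technical nuisance; it is fatal to the quantitative route.

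The remedy is already contained in your own treatment of the case $i=g$: the topological argument that $\gamma^s(w)$ cannot cross the stable sides of $\Theta_g$ (these lie in $W^s(P)$, and a long stable leaf meeting one would be forced to coincide with it) works verbatim after applying $f^{g-i}$. That is, $f^{g-i}\gamma^s(w)$ is connected, contains $f^{g-i}w\in\overline{R\setminus\Theta_{g-i}}$, and cannot cross $\alpha_{\xi(g-i)+N}^\pm\subset W^s(P)$; hence $f^g z=f^{g-i}x\in f^{g-i}\gamma^s(w)\subset\overline{R\setminus\Theta_{g-i}}$, contradicting $f^g z\in\mathrm{int}(\Theta_g)\subset\mathrm{int}(\Theta_{g-i})$. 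This is precisely the paper's proof, which dispatches all $0\leq i\leq g$ in one uniform stroke: from $x\in\mathcal W^s$ it asserts $f^{g-i}x\in\overline{R\setminus\Theta_{g-i}}$ (points on a long stable leaf inherit the slow-recurrence itinerary of its base point in $\Omega_\infty$), and then observes $f^{g-i}x=f^g z\in\mathrm{int}(\Theta_g)$ for the contradiction. No case splitting is needed.
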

\begin{proof}
Suppose there exists a point $x\in f^iG\cap\mathcal W^s\neq\emptyset$
for some $0\leq i\leq g$.
Then $f^{g-i}x\in \overline{R\setminus\Theta_{g-i}}$.
On the other hand, 
$f^{g-i}x\in f^gG\subset \Theta_{g}$ and thus 
$f^{g-i}x\cap\overline{R\setminus\Theta_{g}}=\emptyset$,
and $f^{g-i}x\cap\overline{R\setminus\Theta_{g-i}}=\emptyset$, a contradiction.
\end{proof}

\begin{figure}
\begin{center}
\includegraphics[height=7cm,width=10cm]
{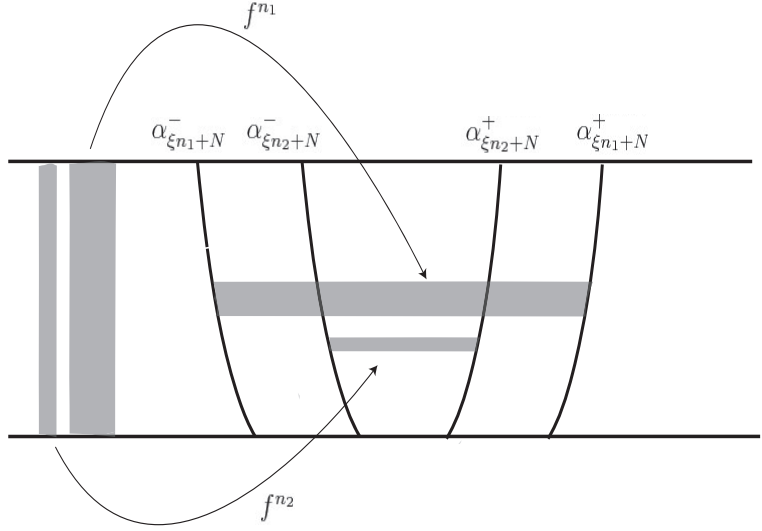}
\caption{Schematic picture of gaps of $\mathcal W^s$. The two shaded vertical rectangles are 
gaps of $\mathcal W^s$ of order $n_1$ and $n_2$, $n_1<n_2$. Each gap of $\mathcal W^s$ of order $n\geq0$ is mapped by $f^n$
to a rectangle whose stable sides are in $\alpha_{\xi n+N}^\pm$ and whose unstable sides are $C^2(b)$-curves in $W^u$.}
\end{center}
\end{figure}

\noindent{\it Proof of Lemma \ref{Q}.}
We construct $\mathcal Q$ by induction.
Consider the partition 
$\mathcal P(\gamma^u(\zeta_0))$ of $\gamma^u(\zeta_0)$ into $C^2(b)$-curves
(See Sect.\ref{family} for the definition of this partition),
 and set $\mathcal P_0=\mathcal P(\gamma^u(\zeta_0))|\Omega_0$. 
For each $\omega\in\mathcal P_0$ let $2\leq \tau(\omega)\leq N$ denote
the unique 
integer such that $f^{\tau(\omega)}\omega\subset\tilde\Gamma^u$. 
There are exactly two elements of $\mathcal P_0$ for which $\tau(\omega)=2$.
If $\tau(\omega)=2$, then
we let
$f^{-\tau(\omega)}\left(f^{\tau(\omega)}\omega\cap\mathcal W^s \right)\in\mathcal
Q.$ If $\tau(\omega)>2$, then 
we let
$f^{-\tau(\omega)}\left(f^{\tau(\omega)}\omega\cap\mathcal W^s\setminus (\alpha_1^-\cup\alpha_1^+)\right)\in\mathcal
Q.$ We remove these pre-images of $\alpha_1^-\cup\alpha_1^+$ to ensure that elements of $\mathcal Q$ are pairwise disjoint.
The next sublemma justifies this construction.
\begin{sublemma}\label{match}
For each $\omega\in\mathcal P_0$,
$f^{-\tau(\omega)}\left(f^{\tau(\omega)}\omega\cap\mathcal W^s\right)\subset
\Omega_\infty$.
\end{sublemma}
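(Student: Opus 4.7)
The plan is to show by induction on $n$ that every $z'\in\omega$ with $f^{\tau(\omega)}z'\in\mathcal W^s$ lies in $\Omega_n$ for every $n\geq 0$. Writing $y:=f^{\tau(\omega)}z'$, fix $z^*\in\Omega_\infty$ with $y\in\gamma^s(z^*)$. The base case $z'\in\Omega_0$ is immediate from $\omega\subset\Omega_0$. For $1\leq n\leq\tau(\omega)-1$, the element $\omega$ is precisely the partition element $\gamma_{\tau(\omega)}$ of $\mathcal P(\gamma^u(\zeta_0))$, so Lemma~\ref{recommend}(a) gives $f^n\omega\subset\overline{R\setminus\Theta}\subset R\setminus\Theta_n$, whence $z'\in\Omega_n$.

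For $n=\tau(\omega)$, the leaf $\gamma^s(z^*)\subset\Theta$ is nearly vertical (slopes $\leq C\sqrt b$) and passes through $z^*\in\Omega_0$, which sits outside $\Theta_0$ at horizontal distance $\gtrsim\sigma^{-N}$ from the stable segment through $\zeta_0$. Since $\Theta$ has height $\lesssim b^{1/4}$, every point of $\gamma^s(z^*)$ inherits this lower bound up to an error of order $b^{3/4}$, so $\gamma^s(z^*)$ is disjoint from the much narrower strip $\Theta_{\tau(\omega)}$ (half-width $\sim\sigma^{-(\xi\tau(\omega)+N)}$), giving $y\notin\Theta_{\tau(\omega)}$ and thus $z'\in\Omega_{\tau(\omega)}$. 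For $n=\tau(\omega)+m$ with $m\geq 1$, the long-leaf property in Lemma~\ref{leaf} yields $|f^my-f^mz^*|\leq(Cb)^{m/2}$, while $z^*\in\Omega_\infty\subset\Omega_m$ forces $f^mz^*\in R\setminus\Theta_m$, so $f^mz^*$ lies at horizontal distance $\gtrsim\sigma^{-(\xi m+N)}$ from the stable segment through $\zeta_0$. Since $\Theta_{\tau(\omega)+m}$ has half-width only $\sim\sigma^{-(\xi(\tau(\omega)+m)+N)}$, the distance from $f^mz^*$ to $\partial\Theta_{\tau(\omega)+m}$ exceeds $\tfrac12\sigma^{-(\xi m+N)}$, which dominates $(Cb)^{m/2}$ once $b_0$ is small enough in terms of $\varepsilon$ and $N$. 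Hence $f^my\notin\Theta_{\tau(\omega)+m}$, so $z'\in\Omega_{\tau(\omega)+m}$, closing the induction.

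The main obstacle is the balancing of two competing exponential scales in the last step: the per-iterate stable-leaf contraction rate $\sqrt{Cb}$ must dominate the per-level thinning rate $\sigma^{-\xi}$ of the critical rectangles $\Theta_k$. This is precisely the reason for the choice $\xi=\lceil 10/\varepsilon\rceil$ in \eqref{xii}, which ensures $\sqrt{Cb}\ll\sigma^{-\xi}$ for all $b<b_0$, so that the leaf separation $(Cb)^{m/2}$ is dominated by the (already exponentially small) horizontal gap between $f^mz^*$ and $\partial\Theta_{\tau(\omega)+m}$ uniformly in $m\geq 1$.
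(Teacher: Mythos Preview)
Your approach is genuinely different from the paper's, and the overall strategy is sound, but there is a quantitative gap in the key step for $n=\tau(\omega)+m$.

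You estimate the distance from $f^mz^*$ to $\partial\Theta_{\tau(\omega)+m}$ as ``(distance from $f^mz^*$ to the central stable curve) minus (half-width of $\Theta_{\tau(\omega)+m}$)'', treating both with a single constant $\sigma$. But the only available bounds come from Lemma~\ref{markov}(a), which gives two different rates $\sigma_1=2-\varepsilon$ and $\sigma_2=4+\varepsilon$. The \emph{lower} bound on the distance of $\alpha_k^\pm$ from the center is of order $\sigma_2^{-k}$, while the \emph{upper} bound on the half-width of $\Theta_{\tau(\omega)+m}$ is of order $\sigma_1^{-(\xi(\tau(\omega)+m)+N)}$. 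Since $\sigma_1/\sigma_2<1$, for large $m$ the subtracted term can exceed the first, and your claimed lower bound $\tfrac12\sigma^{-(\xi m+N)}$ need not hold. The fix is easy: estimate directly the gap between $\partial\Theta_m$ and $\partial\Theta_{\tau(\omega)+m}$, which contains at least the single partition element $\gamma_{\xi m+N+1}$ and hence has length $\geq c\,\sigma_2^{-(\xi m+N+1)}$; this is what you should compare to $(Cb)^{m/2}$. (Your remark that the choice of $\xi$ in \eqref{xii} is what makes $\sqrt{Cb}\ll\sigma^{-\xi}$ is also off: that inequality is simply a smallness condition on $b$ and holds for any fixed $\xi$; the specific value $\xi=[10/\varepsilon]$ is used elsewhere, e.g.\ in the proof of (P5).)

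The paper avoids all of this with a purely topological argument. It shows the stronger fact $f^{n+\tau(\omega)}z\in\overline{R\setminus\Theta_n}$ (not merely $\overline{R\setminus\Theta_{n+\tau(\omega)}}$): if this failed, the short curve $f^n\gamma^s(y)$ would have endpoints on both sides of the stable boundary $\alpha_{\xi n+N}^\pm\subset W^s(P)$; but any intersection of a forward-contracted curve in $R$ with $W^s(P)$ forces the whole curve into $W^s(P)$, and a connected piece of $W^s(P)\cap R$ cannot join $\Theta_n$ to $R\setminus\Theta_n$. This sidesteps all rate comparisons and is more robust, though your metric argument (once corrected) has the virtue of being entirely self-contained from the basic contraction/expansion estimates.
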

\begin{proof}
Recall that $\Omega_\infty$ is the set of points which do not approach the critical region too close and too soon.
In the proof below we use the fact that if a point in a $\gamma^s$-curve is in $\Omega_\infty
$ then the curve satisfies the same property. 

Let $z\in f^{-\tau(\omega)}\left(f^{\tau(\omega)}\omega\cap\mathcal W^s\right)$.
Since
$f^{-\tau(\omega)}\left(f^{\tau(\omega)}\omega\cap\mathcal W^s\right)\subset \Omega_{\tau(\omega)}$ by the definition
of $\tau(\omega)$,
to conclude $z\in\Omega_\infty$
it suffices to show $z\in\Omega_{n+\tau(\omega)}$ for every $n>0$. 

There exists $y\in\Omega_\infty$ such that
 $f^{\tau(\omega)}z\in\gamma^s(y)$.
Then $f^ny\in \overline{R\setminus\Theta_{ n}}$ by \eqref{sr}. 
Suppose that $f^{n+\tau(\omega)}z\notin \overline{R\setminus\Theta_{ n}}$, namely $f^{n+\tau(\omega)}z\in\Theta_{n}$. Then
$f^n\gamma^s(y)$ would intersect the stable side of $\Theta_{n}$. 
Also, since $y\in\Omega$ the forward iterates of $\gamma^s(y)$ are contained in a bounded region, and so $f^n\gamma^s(y)\subset R$.
Because of the contraction along $\gamma^s(y)$ and since the stable sides of $\Theta_{n}$ are contained in $W^s(P)$, this would imply $f^n\gamma^s(y)\subset W^s(P)$. 
However, it is not possible to connect points in $\overline{R\setminus\Theta_{n}}$
and $\Theta_{n}$ by curves in $W^s(P)\cap R$.
Hence we obtain $f^{n+\tau(\omega)}z\notin \overline{R\setminus\Theta_{ n}}$ 
and so $z\in\Omega_{n+\tau(\omega)}$.
\end{proof}

For the next step of the induction, consider a gap $G$ of $\mathcal W^s$
of order $g$ and let $\gamma\subset\omega\in\mathcal Q$ be such that
$f^{\tau(\omega)}\gamma$ is a $C^2(b)$ curve connecting the two stable sides of $G$.
Let $\omega'\subset\gamma$ be the preimage under $f^{\tau(\omega)+g}$ of an element
of the partition $\mathcal P(f^{\tau(\omega)+g}\gamma)$ such that $\omega'$ 
contains points of $\Omega_\infty$. 
Then  $f^{m+\tau(\omega)+g}\omega'\in \tilde\Gamma^u$,
where $m=
\tau(f^{\tau(\omega)+g}\omega')$.
Let $\tilde\omega:=f^{-m-\tau(\omega)-g}\left(f^{m+\tau(\omega)+g}\omega'\cap\mathcal
W^s\setminus(\alpha_1^-\cup\alpha_1^+)\right)$.
The next sublemma justifies this construction.

\begin{sublemma}\label{match2}
$\tilde\omega\subset\Omega_\infty $.
\end{sublemma}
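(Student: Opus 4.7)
The plan is to mirror the proof of Sublemma \ref{match} with three-phase bookkeeping reflecting the decomposition $T:=\tau(\omega)+g+m$ of the total inducing time. Fix $z\in\tilde\omega$; by construction $f^Tz\in\mathcal W^s\setminus(\alpha_1^-\cup\alpha_1^+)$, so there exists $y\in\Omega_\infty$ with $f^Tz\in\gamma^s(y)$. We must verify that $z\notin\Theta_0$ and $f^kz\in R\setminus\Theta_k$ for every $k\geq1$. The condition $z\notin\Theta_0$ is immediate since $z\in\gamma\subset\gamma^u(\zeta_0)\setminus\Theta_0$.

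The trailing range $k\geq T$ is handled verbatim by the stable-leaf crossing argument of Sublemma \ref{match}: if $f^{T+n}z\in\Theta_n$ for some $n\geq0$, then the connected curve $f^n\gamma^s(y)$ would have to meet $\alpha_{\xi n+N}^\pm$ and hence lie in $W^s(P)\cap R$, forcing $y\in W^s(P)$ and contradicting $y\in\Omega_\infty$. The nested inclusion $\Theta_{T+n}\subset\Theta_n$ then upgrades this to $f^{T+n}z\in R\setminus\Theta_{T+n}$.

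For $k<T$ I would split into three subranges. On $[1,\tau(\omega))$, Lemma \ref{recommend}(a) applied to the partition element $\gamma_{\tau(\omega)}\in\mathcal P(\gamma^u(\zeta_0))$ containing $\gamma$ yields $f^kz\in\overline{R\setminus\Theta}\subset R\setminus\Theta_k$. On $(\tau(\omega)+g,T)$, the same lemma applied to the return-time-$m$ partition element containing $f^{\tau(\omega)+g}\omega'$ yields the analogous conclusion. The middle interval $[\tau(\omega),\tau(\omega)+g)$ requires a gap-traversal argument: the bottom border of $G$ is a gap of $\Omega_\infty$ of order $g$, whose points $p$ satisfy $f^ip\in R\setminus\Theta_i$ for $1\leq i<g$ by the very definition of $\Omega_{g-1}$; since the two long stable leaves bounding $G$ contract exponentially (Lemma \ref{leaf}(b)), Lemma \ref{gaplem} combined with the geometric description of $f^iG$ in Remark \ref{gap remark} then propagates this exclusion to $f^{\tau(\omega)+i}z$ for all $0\leq i<g$.

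I expect the hardest step to be the endpoint $k=\tau(\omega)+g$, since $f^gG\subset\Theta_g$ while $\Theta_{\tau(\omega)+g}\subsetneq\Theta_g$, so the gap structure alone cannot exclude $f^{\tau(\omega)+g}z\in\Theta_{\tau(\omega)+g}$. I would resolve this by relocating the stable-leaf crossing argument to time $\tau(\omega)+g$: the return-time-$m$ partition element containing $f^{\tau(\omega)+g}\omega'$ is mapped by $f^m$ into $\tilde\Gamma^u$ and hence meets some $\gamma^s(y')$ with $y'\in\Omega_\infty$, so if $f^{\tau(\omega)+g}z$ were in $\Theta_{\tau(\omega)+g}$ the corresponding stable leaf through it would cross $\alpha_{\xi(\tau(\omega)+g)+N}^\pm$, yielding the same $W^s(P)$ contradiction. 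Stitching the four ranges together produces $z\in\Omega_\infty$.
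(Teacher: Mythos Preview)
Your three-phase decomposition mirrors the paper's proof, and your treatment of the ranges $[1,\tau(\omega))$, $(\tau(\omega)+g,T)$ and $[T,\infty)$ is essentially the same as theirs. The gap is precisely where you flagged it: the step $k=\tau(\omega)+g$.

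Your proposed resolution does not work. You invoke ``the corresponding stable leaf through $f^{\tau(\omega)+g}z$'', but no such leaf is available: the only long stable leaf you have is $\gamma^s(y)\ni f^Tz$, and pulling it back by $f^{-m}$ does not produce a $\gamma^s$-curve through $f^{\tau(\omega)+g}z$ (Lemma~\ref{leaf} gives forward invariance $f^n\gamma^s(z_1)\subset\gamma^s(z_2)$, not backward). So the $W^s(P)$-crossing contradiction cannot be run at that intermediate time.

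The paper's device is simpler and uses a hypothesis you did not exploit: in the construction, $\omega'$ is explicitly chosen so that $\omega'\cap\Omega_\infty\neq\emptyset$. Pick $w\in\omega'\cap\Omega_\infty$; then by definition of $\Omega_\infty$ one has $f^{\tau(\omega)+g}w\in\overline{R\setminus\Theta_{\tau(\omega)+g}}$. Now $f^{\tau(\omega)+g}\omega'$ is a single element of the partition $\mathcal P(f^{\tau(\omega)+g}\gamma)$, hence a curve lying between two consecutive $\alpha_n^{\pm}$. The stable sides of $\Theta_{\tau(\omega)+g}$ are themselves curves $\alpha_{\xi(\tau(\omega)+g)+N}^{\pm}$, so a partition element either lies entirely inside $\Theta_{\tau(\omega)+g}$ or entirely outside. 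Since it contains $f^{\tau(\omega)+g}w$, the whole of $f^{\tau(\omega)+g}\omega'$ lies in $\Theta_g\setminus\Theta_{\tau(\omega)+g}$, which is exactly what you need.
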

\begin{proof}
By the definition of $\tau(\omega)$, we have
$f^i\omega'\subset \overline{R\setminus\Theta}$ for $0< i<\tau(\omega)$.
We have $f^{\tau(\omega)}\omega'\subset G$, and so by the definition of the gap $G$ of order $g$, 
$f^j(f^{\tau(\omega)}\omega')\subset \overline{R\setminus\Theta_{ j}}$
for $0\leq j \leq g-1$,  and 
$f^{g+\tau(\omega)}\omega'\subset\Theta_{ g}$.
Since
$\Theta_{\tau(\omega)+j}\subset \Theta_{ j}$,
we in fact get
$f^{j}(f^{\tau(\omega)}\omega')\subset\overline{R\setminus\Theta_{j+\tau(\omega)}}$
for $0\leq j\leq g-1$.
However, since $\omega'\cap\Omega_\infty\neq\emptyset$
we have $f^{\tau(\omega)+g}\omega'\subset \Theta_{ g}\setminus
\Theta_{g+\tau(\omega)}$,
and thus
 $\omega'\subset\Omega_{g+\tau(\omega)}.$

By the definition of $m$,
we have
 $f^{n}(f^{\tau(\omega)+g}\omega')\subset \overline{R\setminus \Theta}$ for every
$0<n<m$, and 
 so $\tilde\omega\subset\omega'\subset \Omega_{m+g+\tau(\omega)-1}$.
 In addition,
 $f^{m+g+\tau(\omega)}\tilde\omega\subset\mathcal W^s\subset \overline{R\setminus\Theta_0}$ 
  imply 
$\tilde\omega\subset\Omega_{m+g+\tau(\omega)}$. 
The argument of Lemma \ref{match} shows 
$f^{n}(f^{m+g+\tau(\omega)}\tilde\omega)\subset \overline{R\setminus\Theta_{ n}}$
for every $n>0$,
and so
$\tilde\omega\subset\Omega_{n+m+g+\tau(\omega)}$.
\end{proof}

By virtue of Lemma \ref{gaplem}, this construction can be repeated
indefinitely. We can finish the construction of
$\mathcal Q$ by induction.
Statements (a)-(c) of Lemma \ref{Q} are immediate consequences of the construction. 

We prove statement (d). Let $\omega\in \mathcal Q$.
By construction, $\overline{f^{\tau(\omega)}\omega}\setminus f^{\tau(\omega)}\omega\subset \alpha_1^-\bigcup\alpha_1^+$.
Hence, $\overline{\omega}\setminus \omega\subset W^s(P)$.
By construction, if $\tau(\omega)>2$ then
$\alpha_1^+\cap\overline{\omega}=\emptyset$.
If $\tau(\omega)=2$ then either $\alpha_1^+\cap\overline{\omega}=\emptyset$, or else $\alpha_1^+\cap \omega\neq\emptyset$.
In both cases $\alpha_1^+\notin\overline{\omega}\setminus \omega.$
Hence $\overline{\omega}\setminus \omega\subset W^s(P)\setminus\alpha_1^+$.
\qed

\subsection{Construction of an inducing scheme from the induced map}\label{symbolcode}
From $(\hat S, \tau)$ in Sect.\ref{induced}
we now construct an inducing scheme $(S,\tau)$ as in Proposition \ref{lat}.

Define an induced map $F\colon\bigcup_{I\in\hat S} I\to\Lambda$ by
$Fz=f^{\tau(I)}z$, where $I$ is the element of $\hat S$ containing $z$. 
Following the terminology and notation in Definition \ref{scheme}, define the {\it inducing domain} $X$ by $$X=\bigcap_{n=-\infty}^\infty
F^{-n}\left(\bigcup_{I\in\hat S} I\right).$$ By Lemma \ref{X} below, $X\neq\emptyset$.
Define the collection $S$ of {\it basic elements} by
$$S=\{I\cap X\colon I\in \hat S\ \text{and}\ I\cap X\neq\emptyset\}.$$
By definition, elements of $S$ are pairwise disjoint and $X=\bigcup_{J\in S}J$ holds.
Set $$X^*=\bigcup_{J\in S} \overline{J}.$$
Define $\tau\colon S\to\mathbb N$
by $\tau(J)=\tau(I)$, where $I$ is the element of $\hat S$ containing $J$.
\medskip

\begin{lemma}\label{X}
There are families ${\Gamma^u}'\subset\Gamma^u$, ${\Gamma^s}'\subset\Gamma^s$
such that $X$ is a lattice defined by ${\Gamma^u}'$ and ${\Gamma^s}'$.
\end{lemma}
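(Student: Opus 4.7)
The plan is to build the families ${\Gamma^u}'$ and ${\Gamma^s}'$ as decreasing intersections indexed by the forward/backward iterates of $F$, using the observation that forward $F$-iteration refines the stable family while backward $F$-iteration refines the unstable family. More precisely, I would set
\[
X_n^+:=\bigcap_{k=0}^{n}F^{-k}\Bigl(\bigcup_{I\in\hat S}I\Bigr),\qquad X_n^-:=\bigcap_{k=0}^{n}F^{k}\Bigl(\bigcup_{I\in\hat S}I\Bigr),
\]
and prove by induction that $X_n^+$ is an $s$-sublattice of $\Lambda$ defined by a nested sequence ${\Gamma^s}_0\supset{\Gamma^s}_1\supset\cdots\subset\Gamma^s$, and symmetrically that $X_n^-$ is a $u$-sublattice defined by ${\Gamma^u}_0\supset{\Gamma^u}_1\supset\cdots\subset\Gamma^u$. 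The conclusion will then follow by setting ${\Gamma^s}'=\bigcap_n{\Gamma^s}_n$ and ${\Gamma^u}'=\bigcap_n{\Gamma^u}_n$, since $X=\bigcap_{n\ge 0}(X_n^+\cap X_n^-)$ and the intersections are monotone.

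The base case ${\Gamma^s}_0$ is handed to us by Corollary~\ref{corQ}(a): $\bigcup_{I\in\hat S}I=\Lambda\setminus\Lambda_B$, and since each $I\in\hat S$ was constructed in Section~\ref{induced} as an $s$-sublattice of $\Lambda$ (built from $\{\gamma^s(z):z\in\omega\}$ for $\omega\in\mathcal{Q}$) and $\Lambda_B$ is either empty or itself an $s$-sublattice, the union is an $s$-sublattice. For the inductive step, suppose $X_{n-1}^+$ is the $s$-sublattice determined by ${\Gamma^s}_{n-1}$. On each $I\in\hat S$, the map $F|_I=f^{\tau(I)}|_I$ sends long stable leaves to long stable leaves (Lemma~\ref{leaf}(a)) and, by Corollary~\ref{corQ}(c), sends $\overline{I}$ onto a $u$-sublattice of $\Lambda$. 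Consequently, the set of $\gamma^s\in{\Gamma^s}_{n-1}$ whose $F$-image meets another long stable leaf in ${\Gamma^s}_{n-1}$ (equivalently, the $\gamma^s$-curves $\gamma^s(z)\cap I$ whose $F$-image contains a leaf from ${\Gamma^s}_{n-1}$) defines the subfamily ${\Gamma^s}_n\subset{\Gamma^s}_{n-1}$, and $X_n^+=\{\gamma^u\cap\gamma^s:\gamma^u\in\Gamma^u,\gamma^s\in{\Gamma^s}_n\}$.

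The dual construction of ${\Gamma^u}'$ proceeds identically, using backward contraction (property P2) in place of Lemma~\ref{leaf}(a): each $F(I)$ is a $u$-sublattice, so intersecting $X_{n-1}^-$ with $F^n(\bigcup I)$ selects those $\gamma^u\in{\Gamma^u}_{n-1}$ whose appropriate $F^n$-preimage lies in a curve of ${\Gamma^u}_{n-1}$, giving ${\Gamma^u}_n\subset{\Gamma^u}_{n-1}$. Nonemptiness of $X$ is settled separately by exhibiting a periodic orbit of $F$: for any $I_0\in\hat S$, since $F(\overline{I_0})$ is a $u$-sublattice crossing the full stable width of $\Lambda$ and in particular crosses $\overline{I_0}$, a standard graph-transform argument using (P2) and (P3) produces a fixed point of $F$ in $I_0$ whose $F$-orbit obviously lies in $X$.

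The main obstacle is verifying that the inductive refinement really preserves the product (lattice) structure — i.e., that cutting $X_{n-1}^+$ by the condition $F^n(z)\in\bigcup I$ removes complete $\gamma^s$-leaves rather than slicing across them. This is where Corollary~\ref{corQ}(d), which says $\overline{I}\setminus I\subset W^s(P)\setminus\{P\}$, becomes critical: the boundaries of basic elements $I\in\hat S$ are themselves $\gamma^s$-curves (pieces of $W^s(P)$), so the set $\{z\in\bigcup I:F(z)\in\bigcup I\}$ is saturated by long stable leaves, and Lemma~\ref{leaf}(a) together with the uniqueness clause of Lemma~\ref{leaf} guarantees that the preimage under $F|_I$ of a leaf $\gamma^s(z)\in{\Gamma^s}_{n-1}$ is exactly a leaf in $\Gamma^s$ — not a proper subarc. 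With this in hand, the decreasing intersection produces the claimed ${\Gamma^u}'$ and ${\Gamma^s}'$, completing the proof.
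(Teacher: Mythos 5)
Your proposal is correct and follows essentially the same route as the paper: split $X$ into the backward part $\bigcap_{n\geq0}F^{-n}(\bigcup_{I\in\hat S}I)$, which is saturated by full $\gamma^s$-leaves because each $I\in\hat S$ is an $s$-sublattice and $F$ maps stable leaves into stable leaves, and the forward part, a decreasing intersection of $u$-sublattices whose defining $u$-families intersect to give ${\Gamma^u}'$. The only divergence is nonemptiness, where the paper simply notes that the fixed saddle $P$ lies in an element of $\hat S$ and is fixed by $F$, whereas you produce a periodic point by a graph-transform argument — valid (the fixed point cannot lie on $\overline{I_0}\setminus I_0\subset W^s(P)\setminus\{P\}$ since periodic points in $W^s(P)$ must equal $P$), but heavier than needed.
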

\begin{proof}
Write $X=X^-\cap X^+$ where
$X^-=\bigcap_{n=1}^{\infty}
F^{n}\left(\bigcup_{I\in\hat S} I\right)$ and $X^+=\bigcap_{n=0}^{\infty}
F^{-n}\left(\bigcup_{I\in\hat S} I\right)$.  
Since there is an element of $\hat S$ containing the fixed saddle $P$, $X^+$ contains $P$ 
and so is not an empty set. By construction, $X^+$ is written as a union of curves in $\Gamma^s$.

For each $n\geq1$ let $\Gamma^u_n$ denote the defining family of $u$-curves of
the lattice $F^{n}\left(\bigcup_{I\in\hat S} I\right)$.
Then $\bigcup_{\gamma^u\in\Gamma^u_n}\gamma^u$ is a closed set, strictly decreasing in $n$.
Hence $X^+$ is the union of $u$-curves in $\bigcap_{n=1}^\infty\Gamma^u_n$.
\end{proof}

 We now prove (P1)-(P4) from Proposition \ref{lat},
and then verify (A1)-(A3) from Definition \ref{scheme}. 
 Proofs of (P5) and (P6) are deferred to Sect.\ref{opt}.
 \medskip

\noindent{\it Proof of (P1).}
Let $p\colon \Lambda\to\gamma^u(\zeta_0)$ denote the holonomy map along $\gamma^s$-curves, i.e.,
$p(x)=\gamma^u(\zeta_0)\cap\gamma^s(x)$.
For subsets $A$, $B$ of $\Lambda$ we use the notation $A=_sB$ if $pA=pB$.
The meaning of $A\subset_s B$ is analogous.

 Let $J\in S$. By Lemma \ref{X}, $J$ is a lattice defined by ${\Gamma^u}'$ and
a subset of ${\Gamma^s}'$. 
By construction,
for each  $\gamma^u\in{\Gamma^u}'$,
$f^{\tau(J)}(\gamma^u\cap J)\in\Gamma^u.$
Hence, it suffices to show \begin{equation}\label{P1eq}
f^{\tau(J)}J\subset_sX\text{ and }f^{\tau(J)}\overline{J}=_sX^*.\end{equation}

 Let  $I$ denote the element of $\hat S$
containing $J$. 
Then $J=_sI\setminus
\bigcup_{n\geq0}F^{-n}\Lambda_B$. Hence
$f^{\tau(J)}J=_sf^{\tau(J)}I\setminus
\bigcup_{n\geq0}F^{-n+1}\Lambda_B=f^{\tau(J)}I\setminus \bigcup_{n\geq0}F^{-n}\Lambda_B
\subset\Lambda\setminus \bigcup_{n\geq0}F^{-n}\Lambda_B=_s X
$. Hence the first item in \eqref{P1eq} holds. The second one follows from 
$f^{\tau(J)}J=_sf^{\tau(J)}I\setminus \bigcup_{n\geq0}F^{-n}\Lambda_B
$.
\medskip

\noindent{\it Proofs of (P2)-(P4).}
(P2) follows from the backward contraction on the leaves in $\tilde\Gamma^u$ 
(see \cite[Lemma 4.2]{SenTak11}) and the fact that any leaf in $\Gamma^u$ 
is a $C^1$-limit of leaves in $\tilde\Gamma^u$.
Since $f^{\tau(J)}$ is a composition of first return maps to $\Theta$, 
(P3) and (P4)(a) follow from the estimates in Proposition \ref{markov}.
(P4)(b) follows from Lemma \ref{leaf}(b).
\qed
\medskip

\noindent{\it Verification of (A1).}
Included in (P1).
\medskip

\noindent{\it Verification of (A2).}
Define a coding map $h\colon S^\mathbb Z\to X^*$ by 
\begin{equation}\label{hdefi}h(\underline{a}):=
\overline{J_{0}}\cap\left(\bigcap_{n\geq1} f^{-\tau(J_{0})}\circ \cdots\circ f^{-\tau(J_{n-1})}(\overline{J_n})\right)
\cap \left(\bigcap_{n\geq1} f^{\tau(J_{-1})}\circ \cdots\circ f^{\tau(J_{-n})}(\overline{J_{-n}})\right).
\end{equation}
To verify (A2) we show that $h$ is well-defined, and induces
a measurable bijection between $S^{\mathbb Z}\setminus h^{-1}(X^*\setminus X)$ and $X$.
\medskip


\noindent{\it Well-definedness.} 
Since the set on the right-hand-side of \eqref{hdefi} is an intersection of a nested closed sets, it is nonempty. To show that it is a singleton, for each $n\geq1$ denote by $Q_n^s(\underline{a})\subset\Lambda$ the rectangle spanned by the closed $s$-sublattice:
$$Q_n^s(\underline{a}):=
\overline{J_{0}}\cap\left(\bigcap_{k=1}^n f^{-\tau(J_{0})}\circ \cdots\circ f^{-\tau(J_{k-1})}(\overline{J_k})\right),
$$
and define $Q_n^u(\underline{a})$ similarly:
$$Q_n^u(\underline{a}):=
\overline{J_{0}}\cap\left(\bigcap_{k=1}^n f^{\tau(J_{-1})}\circ \cdots\circ f^{\tau(J_{-k})}(\overline{J_k})\right).
$$
We have $$h(\underline{a})\subset \left(\bigcap_{n\geq1}Q_n^s(\underline{a})\right)
\cap\left(\bigcap_{n\geq1}Q_n^u(\underline{a})\right).$$
By (P1), 
$\{Q_n^s(\underline{a})\}$ is a nested sequence of nonempty closed sets.
Hence, $\bigcap_{n\geq1}Q_n^s(\underline{a})\neq\emptyset$.
By (P3) and the fact that $\Lambda$ is closed, 
the boundary curves of $Q_n^s(\underline{a})$ in $\Gamma^s$ converge, in the $C^0$ topology, to a curve in $\Gamma^s$.
Hence, $\bigcap_{n\geq1}Q_n^s(\underline{a})\in\Gamma^s$.
In the same way we have $\bigcap_{n\geq1}Q_n^u(\underline{a})\in\Gamma^u$.
Since each curve in $\Gamma^s$ intersects each curve in $\Gamma^u$ exactly at one point,
it follows that the set on the right-hand-side of \eqref{hdefi} is a singleton. So, 
 $h$ is well-defined.
\medskip

\noindent{\it Measurable bijectivity.} 
Let $\underline{a}$, $\underline{a'}\in S^{\mathbb Z}\setminus h^{-1}(X^*\setminus X)$
and suppose $h(\underline{a})=h(\underline{a}')=x$. 
We have $x\in X$, and (P1) gives $F^nx\in X$ for every $n\geq1$.
Since elements of $S$ are pairwise disjoint, $a_n=a_n'$ holds for every $n\geq 0$.
In addition, the next lemma implies $a_n=a_n'$ for every $n< 0$.

\begin{lemma}\label{bdisjoint}
For $J$, $J'\in S$ distinct either $f^{\tau(J)}\overline{J}\cap f^{\tau(J')}\overline{J'}=\emptyset$ or $f^{\tau(J)}\overline{J}\cap f^{\tau(J')}\overline{J'}\subset X^*\setminus X$.
\end{lemma}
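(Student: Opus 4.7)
The plan is to argue by contradiction. Suppose that $x\in X$ lies in $f^{\tau(J)}\overline{J}\cap f^{\tau(J')}\overline{J'}$ for distinct $J,J'\in S$. By (A1), both $f^{\tau(J)}|_{\overline{J}}$ and $f^{\tau(J')}|_{\overline{J'}}$ are homeomorphisms onto their images, so $x$ has unique preimages $y\in\overline{J}$ and $y'\in\overline{J'}$ with $f^{\tau(J)}y=f^{\tau(J')}y'=x$.

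The first step is to show $\tau(J)=\tau(J')$. Without loss of generality assume $\tau(J)<\tau(J')$. Then $y=f^{\tau(J')-\tau(J)}y'$ sits in $\overline{J}\subset X^*\subset\Lambda$, exhibiting a return of the forward $f$-orbit of $y'$ to $\Lambda\subset\Theta$ at the intermediate time $0<\tau(J')-\tau(J)<\tau(J')$. This contradicts the first-return property of $\tau$ (the inducing scheme is constructed over a first return time, as emphasized in the remark after (A3); Lemma~\ref{recommend}(a) together with the gap analysis of Sect.~\ref{induced} ensures the orbit leaves $\Theta$ strictly between successive returns to $\Lambda$).

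With $\tau(J)=\tau(J')$, injectivity of $f^{\tau(J)}$ forces $y=y'$, and disjointness of $J$ and $J'$ places $y$ in $(\overline{J}\setminus J)\cap(\overline{J'}\setminus J')$. Writing $J=I\cap X$ and $J'=I'\cap X$ with $I,I'\in\hat S$, one decomposes $\overline{J}\setminus J\subset(\overline{I}\setminus I)\cup(I\setminus X)$: Corollary~\ref{corQ}(d) places $\overline{I}\setminus I\subset W^s(P)\setminus\{P\}$, while $I\setminus X$ consists of points whose forward $F$-orbit leaves $\bigcup_{I\in\hat S}I$.

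The concluding step is to derive $x\notin X$ in either scenario, which contradicts our standing hypothesis. If $y\in I\setminus X$, then the forward $F$-orbit of $x=F(y)$ is the one-step shift of that of $y$ and hence escapes $\bigcup_{I}I$, so $x\notin X$. If $y\in W^s(P)\setminus\{P\}$, then $y$ lies on a stable leaf separating pairwise disjoint elements of $\hat S$ (not in the interior of any), so $y\notin\bigcup_{I}I$; the image $x=f^{\tau(J)}y\in W^s(P)\setminus\{P\}$ by $f$-invariance of the stable manifold, and an analogous ``separating leaf'' argument places $x$ outside $\bigcup_{I}I$, hence outside $X$. The hardest part will be carefully verifying this last point: one must ensure that the $f$-image of a boundary stable leaf of $I$ and $I'$ lands again on a boundary stable leaf of some basic elements of $\hat S$, which requires unpacking how the construction of $\hat S$ in Sect.~\ref{induced} interacts with forward iteration on $W^s(P)$ (in particular the exclusion of $\alpha_1^+$ in Lemma~\ref{Q}(d) is designed precisely to rule out the pathological configurations that would otherwise arise here).
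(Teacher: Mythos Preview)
Your argument is considerably more involved than the paper's, and a couple of the intermediate steps do not go through as written.

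The paper's proof is a three-line case split. If $\tau(J)=\tau(J')$ then $f^{\tau(J)}$ is a global diffeomorphism, and the closures $\overline J,\overline{J'}$ are disjoint by construction, so the images are disjoint. If $\tau(J)>\tau(J')$, one pulls back by $f^{\tau(J')}$: any point of $f^{\tau(J)-\tau(J')}\overline J\cap\overline{J'}$ lying in $J'$ would exhibit a point of $J$ entering $\Lambda$ strictly before time $\tau(J)$, contradicting that $\tau(J)$ is the \emph{first} entry time; hence the pulled-back intersection lies in $\overline{J'}\setminus J'$, and pushing forward by $f^{\tau(J')}$ lands in $X^*\setminus X$.

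Compared to this, your route has two genuine problems. In your Step~2 you claim that $y=f^{\tau(J')-\tau(J)}y'\in\Lambda$ contradicts the first-return property of $\tau$; but the first-entry-time identity $\tau(y')=\tau(J')$ is only guaranteed for $y'\in J'$ (indeed only for $y'\in I'$), not for $y'\in\overline{J'}\setminus J'$. Since you have not yet excluded $y'\in\overline{J'}\setminus J'$, no contradiction is produced, and in fact the paper's proof shows that the images \emph{can} intersect when $\tau(J)\neq\tau(J')$---just only inside $X^*\setminus X$. In your Step~3, the assertion that disjointness of $J$ and $J'$ forces $y\in(\overline J\setminus J)\cap(\overline{J'}\setminus J')$ is also unjustified: disjointness only prevents $y$ from lying in \emph{both} $J$ and $J'$, not from lying in one of them while sitting on the boundary of the other. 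Nothing in the construction rules out $y\in J\cap(\overline{J'}\setminus J')$ a priori.

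The upshot is that your contradiction strategy forces you into a delicate analysis of boundary leaves in $W^s(P)$ (your Step~4), whereas the paper sidesteps all of this by arguing directly on the two $\tau$-cases and never needing to track a specific point $x\in X$ through the argument.
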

\begin{proof}
It is obvious that if $\tau(J)=\tau(J')$ then $f^{\tau(J)}\overline{J}\cap f^{\tau(J')}\overline{J'}=\emptyset$.
Suppose $\tau(J)>\tau(J')$ and
 $f^{\tau(J)}\overline{J}\cap f^{\tau(J')}\overline{J'}\neq\emptyset$.
Then $f^{\tau(J)-\tau(J')}\overline{J}\cap\overline{J'}
\neq\emptyset$ and since $\tau(J)$ is the first entry time of $J$ to $\Lambda$ and $J'\subset\Lambda$, then $f^{\tau(J)-\tau(J')}\overline{J}\cap\overline{J'}\subset \overline{J'}\setminus J'$. The intersection thus belongs to $X^*\setminus X$.
\end{proof}

Consequently, $\tilde h:= h|_{S^{\mathbb Z}\setminus h^{-1}(X^*\setminus X)}$
is injective. Since $h$ is onto $X$, $\tilde h$ is onto $X$ as well.
From the uniform hyperbolicity of $F$ and the fact that
the cylinder sets form a base of the topology in $S^{\mathbb Z}$,
$\tilde h$ is continuous and maps open sets to Borel sets.
By  \cite[Claim 3.3]{SenTak11}, $\tilde h^{-1}$ is measurable.
\medskip

\noindent{\it Verification of (A3).}
From Corollary \ref{Q}(d) and the construction, 
 $\overline{J}\setminus J\subset (W^s(P)\setminus\{P\})
\cup\bigcup_{n\geq0}F^{-n}\Lambda_B$ holds for each $J\in S$.
By the next lemma,
no $f$-invariant probability measure gives positive weight to $X^*\setminus X$.
\begin{lemma}\label{setB}
$\bigcup_{n\geq0}F^{-n}\Lambda_B$ is a $s$-sublattice of $\Lambda$. In addition, for any $\mu\in\mathcal M(f)$ one has that
$\mu(\bigcup_{n\geq0}F^{-n}\Lambda_B)=0$.
\end{lemma}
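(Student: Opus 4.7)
I would split the argument into the two assertions.

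\medskip

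\emph{$s$-sublattice property.} As noted preceding the lemma, $\Lambda_B$ is itself an $s$-sublattice of $\Lambda$: the point is that $\tau$ is constant on every intersection $\gamma^s\cap\Lambda$, since by Lemma \ref{leaf}(a) the forward $f$-iterate of any point on $\gamma^s(z)$ lies in a single long stable leaf at every step, so the set of $n$ with $f^nz\in\mathcal W^s$ is independent of the choice of $z$ on a given $\gamma^s$. I would then argue by induction that $F^{-n}\Lambda_B$ is an $s$-sublattice of $\Lambda$ for every $n\geq 0$. For the inductive step, fix $J\in S$: by (P1), $f^{\tau(J)}\colon\overline J\to f^{\tau(J)}\overline J$ is a homeomorphism onto a $u$-sublattice of $X^*$, and Lemma \ref{leaf}(a) together with the uniqueness of long stable leaves force forward iterates to carry the $\gamma^s$-curves through $\overline J$ injectively into the $\gamma^s$-curves through $f^{\tau(J)}\overline J$. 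Consequently the $F$-preimage of any union of $\gamma^s$-segments in $\Lambda\cap f^{\tau(J)}\overline J$ is a union of $\gamma^s$-segments in $J$. Summing over $J\in S$ shows that $F^{-1}$ sends $s$-sublattices to $s$-sublattices, and a countable union of $s$-sublattices is again an $s$-sublattice, which completes the first assertion.

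\medskip

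\emph{Vanishing of $\mu$.} Set $A:=\bigcup_{n\geq 0}F^{-n}\Lambda_B$. For any $z\in F^{-n}\Lambda_B$ one has $F^n z\in\Lambda_B$, and therefore $f^k(F^nz)\notin\Lambda$ for every $k\geq 1$ by definition of $B=\{\tau=\infty\}$. Since the only $f$-iterates at which the forward orbit of $z$ revisits $\Lambda$ are the partial sums of induced return times up to the $n$-th return, the forward $f$-orbit of $z$ hits $\Lambda$ at most $n+1$ times. Hence no point of $A$ is infinitely forward-recurrent to $\Lambda$. As $A\subset\Lambda$ and $\Lambda$ is closed (hence measurable) by Proposition \ref{lat}, Poincar\'e recurrence applied to $\Lambda$ with respect to any $\mu\in\mathcal M(f)$ gives that the set of points in $\Lambda$ not infinitely recurrent has $\mu$-measure zero; in particular $\mu(A)=0$.

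\medskip

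The main obstacle is confined to the first part: although the lattice formalism is tailor-made for this kind of argument, one has to verify, using Lemma \ref{leaf}(a) and (P1), that pulling back a union of $\gamma^s$-segments through $F$ produces a union of $\gamma^s$-segments, as opposed to a more complicated set that might break up the foliated structure along the boundary pieces in $W^s(P)$. Once that is confirmed, the remaining bookkeeping for the induction is straightforward, and the measure-theoretic second part is a one-line application of Poincar\'e recurrence.
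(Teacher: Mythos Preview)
Your proof is correct. For the $s$-sublattice assertion your argument is the paper's, only with more detail: the paper records in one line that $F$ carries each $\gamma^s$-curve into a subset of a $\gamma^s$-curve (which is precisely what Lemma~\ref{leaf}(a) yields), so the $F$-preimage of an $s$-sublattice is an $s$-sublattice, and the union over $n$ inherits the property.

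For the measure-zero assertion the two routes differ. You invoke Poincar\'e recurrence: any $z\in A$ visits $\Lambda$ under forward $f$-iteration only finitely often (after $F^n z\in\Lambda_B$ the orbit leaves $\Lambda$ forever), hence $A$ sits inside the non-recurrent part of $\Lambda$, which has $\mu$-measure zero. The paper instead argues directly that the sets $f^{-n}\Lambda_B$ (with $f$, not $F$) are pairwise disjoint for $n\geq 0$---since a point in $\Lambda_B$ never returns to $\Lambda$, hence never to $\Lambda_B$---and by $f$-invariance of $\mu$ this forces $\mu(\Lambda_B)=0$; one then concludes via $\bigcup_{n\geq 0}F^{-n}\Lambda_B\subset\bigcup_{k\geq 0}f^{-k}\Lambda_B$. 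The two arguments are essentially the same idea (the disjointness step is the engine of Poincar\'e recurrence), but the paper's version isolates the key fact $\mu(\Lambda_B)=0$ explicitly, while your version is arguably cleaner in that it dispenses with the bookkeeping of the inclusion $F^{-n}\Lambda_B\subset\bigcup_k f^{-k}\Lambda_B$.
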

\begin{proof}
As in the previous remark, $\Lambda_B$ is an $s$-sublattice of $\Lambda$.
Since $F$ maps each $\gamma^s$-curve into a subset of an $\gamma^s$-curve, 
$F^{-n}\Lambda_B$ is an $s$-sublattice of $\Lambda$
for every $n>0$. Hence the first statement holds.
As for the second one, since $f^{-n}\Lambda_B$ $(n=0,1,\ldots)$ are pairwise disjoint,
$\mu(\Lambda_B)=0$. Hence $\mu(f^{-n}\Lambda_B)=0$ and so $\mu(\bigcup_{n\geq0}F^{-n}\Lambda_B)=0$.
\end{proof} 
 
If there is a $\sigma$-invariant probability measure $\nu$ which gives positive weight to $h^{-1}(X^*\setminus X)$,
then $h_*\nu$ gives positive weight to $X^*\setminus X$, and so does
$\mathcal L(h_*\nu)\in\mathcal M(f)$. This is a contradiction.

\subsection{Comments on proofs of (P5) (P6)}\label{opt}
{To prove (P5) and (P6) we use the fact that the inducing scheme $(S,\tau)$ is built over the first entry time to $\Lambda$. 
%
To prove (P6) it suffices to show that any ergodic measure with entropy $\geq2\varepsilon$ gives positive weight to $X$. This amounts to showing that the first entry time to $\Lambda$ is finite, with positive probability for these measures.

The rest of this section is organized as follows. Sect.\ref{exceptional} and Sect.\ref{ncontrol} are devoted to estimates of the Hausdorff dimension of sets of points not returning to $\Lambda$. (P5) is proved in Sect.\ref{small} with the estimate obtained in Sect.\ref{exceptional}. (P6) is proved  in Sect.\ref{pos}.



\begin{figure}
\begin{center}
\includegraphics[height=5cm,width=10cm]
{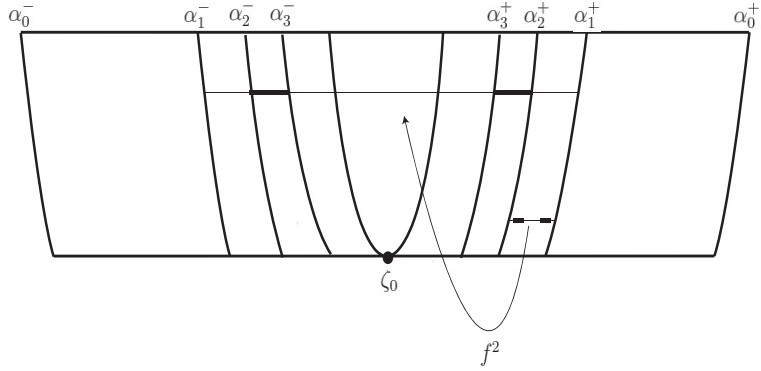}
\caption{
$f^{k_1+\cdots+k_{i-1}}$-images of $\omega_i\in\mathcal Q(k_1,\ldots,k_{i-1},2)$ and the two elements of $\mathcal Q(\omega_i,3)$,
and their $f^{2}$-images.}
\end{center}
\end{figure}

\subsection{Hausdorff dimension of the set of points in $\Lambda$ not returning to $\Lambda$}\label{exceptional}
By Lemma \ref{leaf}, the tangent directions of the curves in $\Gamma^s$ are Lipschitz continuous, and 
so the holonomy map between two curves in $\tilde\Gamma^u$ along $\gamma^s$-curves is Lipschitz continuous. Hence, for any $s$-sublattice $\Lambda'$ of $\Lambda$ the Hausdorff dimension of $\gamma^u\cap \Lambda'$,
denoted by $d^u(\Lambda')$,  is independent of the choice of $\gamma^u\in\tilde\Gamma^u$.

\begin{lemma}\label{bad}
$d^u(\Lambda_B)\leq\varepsilon.$
\end{lemma}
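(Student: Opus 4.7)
My plan is to bound $d^u(\Lambda_B)$ by iterated covering based on the construction of $\mathcal{Q}$ in Lemma \ref{Q}. By the Lipschitz holonomy along $\gamma^s$-curves (Lemma \ref{leaf}(c)), it suffices to estimate $\dim^u_H(\Lambda_B\cap\gamma^u(\zeta_0))$. Since $\Lambda\cap\gamma^u(\zeta_0)=\Omega_\infty$ and Lemma \ref{Q}(a) identifies $\bigcup_{\omega\in\mathcal{Q}}\omega=\{z\in\Omega_\infty:\tau(z)<\infty\}$, the set to bound is exactly $\Omega_\infty\setminus\bigcup_{\omega\in\mathcal{Q}}\omega$.

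Next I would use the inductive nature of the construction. For each $k\geq0$ let $R_k$ denote the union of sub-intervals of $\gamma^u(\zeta_0)$ that have been deferred past stage $k$, i.e., have fallen into a gap of $\mathcal W^s$ at every previous stage. Then $\Lambda_B\cap\gamma^u(\zeta_0)=\bigcap_{k\geq0}R_k$. Every $\omega\in R_k$ carries a total iteration time $T(\omega)$ with $f^{T(\omega)}\omega\in\tilde\Gamma^u$ spanning $\Theta$, and the backward contraction (P2) gives $|\omega|\leq C\sigma_1^{-T(\omega)}$. Descendants $\omega'\in R_{k+1}$ are indexed by a pair $(g,m)$: the order $g\geq1$ of the gap $G\subset\mathcal W^s$ into which $f^{T(\omega)}\omega$ enters, and the index $m$ of the partition element selected in $\mathcal{P}(f^{T(\omega)+g}\omega_G)$ after recovery. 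By Remark \ref{gap remark}, the recovered image $f^{T(\omega)+g}\omega_G$ lies in the very narrow strip obtained as $f^g G$, whose horizontal extent is exponentially small in $\xi g$; since the $m$-th partition element of $\mathcal{P}$ has size of order $\sigma_1^{-m}$, fitting it inside this narrow image forces $m$ to grow at least linearly in $\xi g$. The descendant then satisfies $T(\omega')=T(\omega)+g+m$ and $|\omega'|\leq C\sigma_1^{-T(\omega')}$.

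Setting $H_s(R_k):=\sum_{\omega\in R_k}|\omega|^s$ and combining the size bound with the branching count yields a recursion
\[
H_s(R_{k+1})\leq C'\,H_s(R_k)\sum_{g\geq1}G_g\,\sigma_1^{-cs\xi g},
\]
for some $c>0$, where $G_g$ counts the gaps of $\Omega_\infty$ of order $g$ in $\gamma^u(\zeta_0)$. The semiconjugacy of $f|_\Omega$ with the full $2$-shift bounds $G_g\leq C\cdot 2^g$, so the series is majorized by $C'\sum_g(2\sigma_1^{-cs\xi})^g$. For $s$ with $2\sigma_1^{-cs\xi}<1$ the series converges and can be made strictly less than $1$ by taking $b$ small; with $\xi=[10/\varepsilon]$ and $\sigma_1=2-\varepsilon$, this is achievable for every $s\geq\varepsilon$. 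Hence $H_s(R_k)\to0$ geometrically, and since the interval diameters in $R_k$ tend uniformly to $0$ with $k$, one concludes $\mathcal H^s(\Lambda_B\cap\gamma^u(\zeta_0))=0$ for every $s\geq\varepsilon$, whence $d^u(\Lambda_B)\leq\varepsilon$.

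The main obstacle is arranging the recursion sharply enough to benefit from the largeness of $\xi$. A crude entropy argument would produce a factor $2/\sigma_1^s$ on the right, giving only $\dim\lesssim 1$; one must fully exploit the exponentially small widths $\sigma_1^{-O(\xi g)}$ of the gaps through the nested gap-then-partition branching, and obtain the combinatorial bound $G_g\leq C\cdot 2^g$ from the horseshoe structure, so that the cumulative contraction in the recursion overwhelms the combinatorial growth for $s$ as small as $\varepsilon$.
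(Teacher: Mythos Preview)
Your strategy is correct and rests on the same mechanism as the paper's: reduce to $\dim^u_H(\Omega_\infty\cap B)$ via the Lipschitz holonomy, cover this set by iterated families of sub-intervals arising from the gap/partition structure, and balance the branching count (at most $C\cdot 2^g$ gaps of order $g$) against the extra contraction $\sigma_1^{-(g+m)}$, where $m\ge\xi g+N$ is forced because the recovered curve lands in the narrow strip $\Theta_g$.

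The organization, however, is genuinely different. You recurse directly on the stage $k$ of the construction of $\mathcal Q$ from Lemma~\ref{Q}, showing that $H_\varepsilon(R_k)$ decays geometrically. The paper instead fixes a threshold $n$, covers $\{z\in\Omega_\infty:\tau(z)\ge n\}$ via ad hoc families $\mathcal Q_n(k_1,\ldots,k_s)$ indexed by certain inter-return times $k_i$, proves the key cardinality bound $\#\mathcal Q_n(\omega_i,k_{i+1})<2^{k_{i+1}/\xi}$ (Sublemma~\ref{card}(d)), and needs a separate argument (Claim~\ref{q}) to ensure $k_1+\cdots+k_s\ge n/(3\xi)$ so that the covering intervals are actually small. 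Your stage-wise recursion is cleaner for this lemma taken in isolation; the paper's threshold-based combinatorics have the compensating advantage that the very same estimates on $\#\mathcal Q_n(k_1,\ldots,k_s)$ are reused verbatim in the proof of (P5).

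One small oversight: your branching sum should also allow $g=0$. For $z\in\Omega_\infty$ one only knows $f^{T(\omega)}z\notin\Theta_{T(\omega)}$, which still permits $f^{T(\omega)}z\in\Theta_0\setminus\Theta_{T(\omega)}$, i.e., landing in the order-$0$ gap of $\mathcal W^s$. This just adds a single term with $G_0=1$ and $m\ge N+1$, absorbed by the factor $\sigma_1^{-\varepsilon N}$ once $N$ is chosen large, so it does not affect your conclusion.
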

\begin{proof} 
Since $d^u(\Lambda_B)=\dim^u_H\left(\Omega_\infty\cap B\right)$
it suffices to show $\dim^u_H\left(\Omega_\infty\cap B\right)\leq\varepsilon.$
Given $g>1$ and a $g$-string $(k_1,\ldots,k_g)$ of positive integers,
we define collections
$\mathcal Q(k_1,k_2,\ldots,k_i)$ $(i=1,2,\ldots,g)$
of pairwise disjoint curves in $\gamma^u(\zeta_0)$ inductively as follows. Let
$$\mathcal Q(k_1)= \{\omega_{1}\subset\gamma^u(\zeta_0)\colon
f^{k_1}\omega_1\in\tilde\Gamma^u\}.$$ 
Given $\mathcal Q(k_1,\ldots,k_{i})$,
for
each $\omega_{i}\in\mathcal Q(k_1,\ldots,k_{i})$ let
$$\mathcal Q(\omega_{i},k_{i+1})= \{\omega_{i+1}\subset\omega_{i}\colon
f^{k_1+\cdots+k_{i}+k_{i+1}}\omega_{i+1}\in\tilde\Gamma^u\},$$
and define inductively
$$\mathcal Q(k_1,\ldots,k_{i+1})=
\bigcup_{\omega_{i}\in \mathcal Q(k_1,\ldots,k_{i}) }
\mathcal Q(\omega_{i},k_{i+1}).$$
(See FIGURE 7).

For $n>0$ let
$$\mathcal Q_n(k_1,\ldots,k_i)=\{\omega_i\in \mathcal Q(k_1,\ldots,k_i)\colon 
\sup\{\tau(z)\colon z\in\omega_i\}\geq n\},$$
and
$$\mathcal Q_n(\omega_{i},k_{i+1})=\{\omega_{i+1}\in\mathcal Q(\omega_{i},k_{i+1})
\colon \sup\{\tau(z)\colon z\in\omega_{i+1}\}\geq n\}.$$
Let
$\omega_0=\gamma^u(\zeta_0)$ and $\mathcal Q_n(\omega_0,k_1)=\mathcal Q_n(k_1).$
\begin{sublemma}\label{card}
If $N>2(1+\xi)$, then for every $n>6N$ and 
for any $z\in \Omega_\infty$ with $\tau(z)\geq n$ there exist an integer $1\leq s\leq n/N$, 
and for each $i=1,\ldots,s$ an integer $k_i\geq N$ 
and a curve
$\omega_i\in\mathcal Q_n(k_1,\ldots,k_i)$ such that:
\begin{itemize}
\item[(a)] $\frac{n}{3\xi}\leq k_1+\cdots+k_{s}\leq n $;

\item[(b)] $z\in\omega_{s}\subset\cdots\subset\omega_1;$

\item[(c)]
$\ell(\omega_{s})\leq
C\sigma_1^{-(k_1+\cdots+k_{s})}$;

\item[(d)] for each $i=0,\ldots,s-1$, $\# \mathcal Q_n(\omega_{i},k_{i+1})<
2^{\frac{k_{i+1}}{\xi}}.$
\end{itemize}
\end{sublemma}

\begin{proof}
For $z\in\Omega_\infty$
define a sequence $0=:t_0<t_1<\cdots$ of return times to
$\Theta$ inductively as follows:
given $t_{i}$ such that $f^{t_{i}}z$ is in the gap of $\mathcal
W^s$ of order $g_i$, define 
$$t_{i+1}=\min\{t>t_i+g_i\colon f^tz
\in\Theta\}.$$
Note that $\{t_i\}$ are not the only return times of the orbit of $z$ to $\Theta$. 
Since $f^{t_{i}}z$ is in the gap of order $g_i$, one has $t_{i+1}\geq t_i+g_i+\xi g_i+N$ and $g_i\geq0$ we have
\begin{equation}\label{d1}t_{i+1}-t_{i}\geq N.\end{equation}
Define $s:=\max\{i\colon t_i< n\}+1$. (\ref{d1}) gives
 $s\leq n/N$. 
\begin{claim}\label{q}
$t_{s-1}+g_{s-1}\geq\frac{n}{3\xi}$.
\end{claim}
\begin{proof}
Suppose the contrary. Then $\xi(t_{s-1}+g_{s-1})+N<[n/2]$.
Since $f^{t_{s-1}}z$ is in a gap of order $g_{s-1}$ one has $f^{t_{s-1}+g_{s-1}}z\in\Theta_{g_{s-1}}\subset\Theta.$
Set $r=t_s-t_{s-1}-g_{s-1}$.
Since $r$ is the first return time of $f^{t_{s-1}+g_{s-1}}z$ to $\Theta$,
$f^{t_{s-1}+g_{s-1}}z$ belongs to a $C^2(b)$-curve connecting either $\alpha_{r}^+$ and $\alpha_{r+1}^+$, or $\alpha_{r}^-$ and $\alpha_{r+1}^-$. 
Since $f^{t_{s-1}+g_{s-1}}z\notin\Theta_{ t_{s-1}+g_{s-1} }$ by the assumption $z\in \Omega_\infty$ (see \eqref{sr}), one has  
$r<\xi(t_{s-1}+g_{s-1})+N<[n/2]$ and $t_{s}<\frac{n}{3\xi}+[n/2]<n$, a contradiction to $t_s\geq n$. 
\end{proof}

For each $i=1,\ldots,s$,
set $k_{i}=t_{i}-t_{i-1}$ and let $\omega_i$ denote the element of $\mathcal Q_n(k_1,\ldots,k_i)$
containing $z$. 
\eqref{q} and the definition of $s$ give $t_{s-1}+g_{s-1}<t_s\leq n$.
Since $k_1+\cdots+k_s=t_s$, (a) holds.
(b) is straightforward. (c) follows from Lemma \ref{markov}. 

In order to prove (d),
observe that
for each $\omega\in \mathcal Q_n(\omega_{i},k_{i+1})$
 there exists a gap $G$ of $\mathcal W^s$ such that
 $f^{k_1+\cdots+k_{i}}\omega\subset G.$
In addition, for any gap $G$ of $\mathcal W^s$ 
we have \begin{equation}\label{num}
\#\{\omega\in  \mathcal
Q_n(\omega_{i},k_{i+1})\colon f^{k_1+\cdots+k_{i}}\omega\subset G \}=0\
\text{or}\ =2,\end{equation}
which is because gaps are not folded up to their order, 
and
$f^jG\cap\Theta=\emptyset$
for $g_i<j<t_{i+1}$
by the definition of $t_{i+1}$.
There is therefore at most one fold at time $t_i+g_i$. 

Let
$g_0$ denote the maximal order of the gap of $\mathcal W^s$ which 
contains $f^{k_1+\cdots+k_{i}}$-images of elements
of $\mathcal Q_n(\omega_{i},k_{i+1}).$ 
Then $g_0+\xi 
g_0+N\leq k_{i+1}$ holds. From \eqref{num} and the fact that the number of gaps
of order $g$ is $\leq 2^g$ we obtain
$\#\mathcal Q_n(\omega_{i},k_{i+1})\leq 2\sum_{i=1}^{g_0}2^i<2^{\frac{k_{i+1}-N}{1+\xi}+2}
<2^{\frac{k_{i+1}}{\xi}}.$
The last inequality holds provided $N>2(1+\xi)$.
\end{proof}
Returning to the proof of Lemma \ref{bad}, we have
$$\Omega_\infty\cap B\subset\{z\in
\Omega_\infty\colon \tau(z)\geq n\}\subset\bigcup_{s=1}^{\left[\frac{n}{N}\right]}
\bigcup_{l=\left[\frac{n}{3\xi}\right]}^n\bigcup_{k_1+\cdots+k_{s}=l}
\bigcup_{\omega_{s}\in \mathcal Q_n(k_1,\ldots,k_{s})}\omega_{s}.$$
By Sublemma \ref{card}(a)(c), 
the lengths of the curves $\omega_{s}$ in the union of the right-hand-side
are exponentially small in $n$. 
We show that $\sum_{\text{all relevant $\omega_s$}}\ell(\omega_s)^{\varepsilon}$ is finite for all $n$.

Observe that
\begin{equation}\label{eqbad}
\sum_{\omega_{i+1}\in\mathcal Q_n(k_1,\ldots,k_{i+1})}
\ell(\omega_{i+1})^{\varepsilon}=\sum_{\omega_{i}\in
\mathcal Q_n(k_1,\ldots,k_{i})}
\ell(\omega_{i})^\varepsilon\sum_{\omega_{i+1}\in\mathcal
Q_n(\omega_{i},k_{i+1}) } \frac{\ell(\omega_{i+1})^\varepsilon}
{\ell(\omega_{i})^\varepsilon}.\end{equation}
On the second sum of the fractions, 
since $\ell(f^{k_1+\cdots+k_{i+1}}\omega_{i+1} )<2$ 
for each $\omega_{i+1}\in\mathcal Q_n(\omega_{i},k_{i+1})$
and
$\|D_xf^{k_{i+1}}|E^u\|\geq\sigma_1^{k_{i+1}}$ for all $x\in
f^{k_1+\cdots+k_{i}}\omega_{i+1}$, we have
$\ell(f^{k_1+\cdots+k_{i}}\omega_{i+1})\leq (1/2)\sigma_1^{-k_{i+1}}.$
From this and the bounded distortion in Lemma \ref{markov},
\begin{equation}\label{d4}\frac{\ell(\omega_{i+1})}{\ell(\omega_i)}\leq
C\cdot\frac{\ell(f^{k_1+\cdots+k_{i}}\omega_{i+1})}{\ell(f^{k_1+\cdots+k_{i}}\omega_i)}\leq
C\sigma_1^{-k_{i+1}}.\end{equation} 
Using \eqref{d4} and 
Sublemma \ref{card}(d),
$$\sum_{\omega_{i+1}\in\mathcal Q_n(\omega_{i},k_{i+1})
}\frac{\ell(\omega_{i+1})^\varepsilon}{\ell(\omega_{i})^\varepsilon} \leq
\#\mathcal Q_n(\omega_{i},k_{i+1})C^\varepsilon\sigma_1^{-\varepsilon
k_{i+1}}\leq C^{\varepsilon}\sigma_1^{-\frac{\varepsilon}{2} k_{i+1}}.$$ 
Plugging this
into the right-hand-side of \eqref{eqbad} we get
\begin{equation}\label{d-1}
\sum_{\omega_i\in\mathcal Q_n(k_1,\ldots,k_{i+1})}\ell(\omega_{i+1})^\varepsilon
\leq C^\varepsilon\sigma_1^{-\varepsilon k_{i+1}}
\sum_{\omega_{i}\in\mathcal Q_n(k_1,\ldots,k_{i})}\ell(\omega_{i})^\varepsilon
.\end{equation} 
The same arguments as above applied to any
 $\omega_1\in\mathcal Q_n(k_1)$ yield
\begin{equation}\label{d-2}
\sum_{\omega_1\in\mathcal Q_n(k_1)}\ell(\omega_{1})^\varepsilon
\leq C^{\varepsilon}\#\mathcal Q_n(k_{1})\sigma_1^{-\varepsilon
k_{1}}\leq C^\varepsilon\sigma_1^{-\frac{\varepsilon}{2} k_{1}}.\end{equation}
Using (\ref{d-1}) inductively and (\ref{d-2})
yields
\begin{equation*}
\sum_{\omega_s\in\mathcal Q_n(k_1,\ldots,k_s)}\ell(\omega_s)^\varepsilon
\leq C^{\varepsilon s
}\sigma_1^{-\frac{\varepsilon}{2}(k_1+\cdots+k_s)}.\end{equation*} 
Hence
\begin{align*}
\sum_{l=\left[\frac{n}{3\xi}\right]}^n\sum_{k_1+\cdots+k_s=l}\sum_{\omega_s\in\mathcal
Q_n(k_1,\ldots,k_s)}\ell(\omega_s)^\varepsilon&\leq C^{\varepsilon s}
\sum_{l=\left[\frac{n}{3\xi}\right]}^n
\sigma_1^{-\frac{\varepsilon l}{2}}\#\left\{
(k_1,\ldots,k_s)\colon \sum_{i=1}^s k_i=l\right\}.
\end{align*}
To estimate the right-hand side we use the following from Stirling's formula for factorials (see e.g. \cite{Fel68}): for sufficiently small $\chi>0$ there exist $c(\chi)>0$ with $c(\chi)\to0$ as $\chi\to0$ such that for any two positive integers $p$, $q$ with $q/p\leq\chi$ one has
$\left(\begin{smallmatrix}p+q\\q\end{smallmatrix}\right)\leq e^{c(\chi)p}$.
Since $k_i\geq N$ and $s/l\leq 1/N$, one has that $e^{c(1/N)}\le \sigma_1^{\frac{\varepsilon}{3}}$ for sufficiently large $N$ and thus
$$\#\left\{
(k_1,\ldots,k_s)\colon \sum_{i=1}^s k_i=l\right\}\leq \begin{pmatrix}l+s\\s\end{pmatrix}
\leq \sigma_1^{\frac{\varepsilon}{3}l}.$$

Then
\begin{align*}
\sum_{l=\left[\frac{n}{3\xi}\right]}^n\sum_{k_1+\cdots+k_s=l}\sum_{\omega_s\in\mathcal
Q_n(k_1,\ldots,k_s)}\ell(\omega_s)^\varepsilon
&\leq
C^{\varepsilon s}\sum_{l=\left[\frac{n}{3\xi}\right]}^n\sigma_1^{-\frac{\varepsilon}{6} l}.
\end{align*} Hence
\[\sum_{s=1}^{\left[\frac{n}{N}\right]}\sum_{l=\left[\frac{n}{3\xi}\right]}^n\sum_{k_1+\cdots+k_s=l}\sum_{\omega_s\in\mathcal
Q_n(k_1,\ldots,k_s)}\ell(\omega_s)^\varepsilon\leq
\frac{C^{\frac{\varepsilon n}{N}}}{C^\varepsilon-1}\sum_{l=\left[\frac{n}{3\xi}\right]}^n
\sigma_1^{-\frac{\varepsilon}{6} l }.\] 
One can choose $N\geq N_0$ large enough at the expense of reducing $b>0$ so that
the expression on the right-hand-side decays exponentially with $n$.
Consequently the
Hausdorff $\varepsilon$-measure of $\Omega_\infty\cap B$ is zero.\end{proof}

\subsection{Proof of (P5)}\label{small}
We are in position to prove (P5) in Proposition \ref{lat}.
\medskip

\noindent{\it Proof of (P5).}
For each $J\in S$ with $\tau(J)=n$, let $\omega_{J}$ denote the
unstable side of the rectangle $Q_{J}$ spanned by $J$ which is
contained in $\gamma^u(\zeta_0)$. Observe that there exists $1\leq s\leq n/N$
and an $s$-string $(k_1,\ldots,k_s)$ of positive integers such that 
$k_1+\cdots+k_s=n$ and  $\omega_{J}\in\mathcal Q_n(k_1,\ldots,k_s)$. 
For two distinct $J_1,J_2\in S$ with $\tau(J_1)=\tau(J_2)=n$, one has $\omega_{J_1}
\cap\omega_{J_2}=\emptyset.$ Therefore,
$$S(n)
\leq\sum_{s=1}^{[\frac{n}{N}]} \sum_{k_1+\cdots+k_s=n}\#\mathcal
Q_n(k_1,\ldots,k_s).$$
Using Sublemma \ref{card}(d) repeatedly and $\#\mathcal Q_n(k_1)\leq 2$,
\begin{align*}\#\mathcal
Q_n(k_1,\ldots,k_s)&=  \sum_{\omega\in\mathcal
Q_n(k_1,\ldots,k_{s-1})}\#\mathcal
Q(\omega,k_{s})\\&\leq\#\mathcal
Q_n(k_1,\ldots,k_{s-1})\max_{\omega\in\mathcal
Q_n(k_1,\ldots,k_{s-1})}\#\mathcal
Q(\omega,k_{s})\\
&\leq\#\mathcal
Q_n(k_1,\ldots,k_{s-1})\cdot 2^{\frac{k_s}{\xi}}\\
&\leq\cdots\\
&\leq \#\mathcal Q_n(k_1)\cdot2^{\frac{1}{\xi}(k_2+\cdots+k_s)}\\
&\leq  2^{\frac{1}{\xi}(k_1+\cdots+k_s)+2}.\end{align*}
The above counting argument shows that
the number of all feasible combinations $(k_1,\ldots,k_s)$ is $\le\sigma_1^{\frac{\varepsilon}{3}n}$.
Hence we obtain
$S(n)\leq (n/N)\sigma_1^{\frac{\varepsilon}{3}n}2^{\frac{n}{\xi}},$ and thus
$$\displaystyle{\varlimsup_{n\to\infty}}\frac{1}{n}\log S(n)\leq \frac{\varepsilon}{3}\log\sigma_1+\frac{1}{\xi}\log2
\leq\varepsilon.$$
The last inequality follows from the definition of $\xi$ in \eqref{xii}.\qed
\medskip

\subsection{Dimension of the set of points in open sets in $W^u$ not returning to $\Lambda$}\label{ncontrol}
Using Lemma \ref{bad} we estimate the Hausdorff dimension of the set of points in open sets in $W^u$ which 
do not return to $\Lambda$ under any positive iteration.
\begin{lemma}\label{esr0}
For any open set $\gamma$ in $W^u$, $\dim^u_H(\gamma\cap
B)\leq\varepsilon.$
\end{lemma}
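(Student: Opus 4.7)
The plan is to reduce Lemma \ref{esr0} to Lemma \ref{bad}, which already bounds the Hausdorff dimension of $\Omega_\infty\cap B$, via forward iteration along $W^u$. Two elementary facts I use throughout: $f$ expands uniformly along $W^u$-arcs (dual to Proposition \ref{lat}(P2)), and $B$ is forward invariant — if $\tau(z)=\infty$ then $\tau(fz)=\infty$, so $fB\subset B$.

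The first step is to find $n\ge 0$ such that $f^n\gamma$ contains an element $\gamma'\in\tilde\Gamma^u$. Indeed, $f^n\gamma$ is a connected sub-arc of $W^u$ whose length grows exponentially in $n$, while $W^u$ repeatedly winds through $\Theta$ (containing $\gamma^u(\zeta_0)$ and its dynamical iterates via Lemma \ref{recommend}); for $n$ large enough, $f^n\gamma$ therefore crosses $\Theta$ from one stable side to the other, and by Lemma \ref{component} any such crossing lies in $\tilde\Gamma^u$. Since $f^n|_\gamma$ is bi-Lipschitz ($f$ is $C^2$ with bounded derivatives on compact sets) and $fB\subset B$, Hausdorff dimension along $W^u$ is preserved, so
\[
\dim^u_H(\gamma\cap B)\,=\,\dim^u_H\!\bigl(f^n(\gamma\cap B)\bigr)\,\le\,\dim^u_H(f^n\gamma\cap B).
\]
The Lipschitz holonomy along $\Gamma^s$-leaves (Lemma \ref{leaf}(c)) then identifies $\gamma'\cap\Lambda_B$ with a subset of $\gamma^u(\zeta_0)\cap\Lambda_B$ of the same Hausdorff dimension, and Lemma \ref{bad} bounds this by $\varepsilon$.

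The hard part, and the main obstacle, is reducing $\gamma'\cap B$ to $\gamma'\cap\Lambda_B$, i.e., ruling out points of $\gamma'\cap B$ lying outside $\Lambda$. Using the diffeomorphism $f^m\colon\gamma_m\to\gamma'$ from Lemma \ref{recommend}, any $z\in\gamma'\cap\Omega\cap B$ corresponds to $z_0=f^{-m}z\in\gamma^u(\zeta_0)\cap\Omega\cap B$, and it suffices to show $z_0\in\Omega_\infty$. If instead $z_0\notin\Omega_\infty$, then $f^kz_0\in\Theta_k$ for some minimal $k$, placing $z_0$ in a gap of $\mathcal W^s$; the inductive construction of the basic elements $\mathcal Q$ (Lemma \ref{Q} together with Sublemmas \ref{match} and \ref{match2}) should then produce some $\omega\in\mathcal Q$ at a later iterate containing $z_0$, forcing $\tau(z_0)<\infty$ and hence $\tau(z)<\infty$, which contradicts $z\in B$. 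A secondary technical point is that $f^n\gamma$ may in fact cover countably many $\tilde\Gamma^u$-curves plus arcs outside $\Theta$; one decomposes $f^n\gamma\cap B$ accordingly and uses the countable stability of Hausdorff dimension to conclude the total bound by $\varepsilon$.
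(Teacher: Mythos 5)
Your reduction of $\gamma\cap B$ to curves $\gamma'\in\tilde\Gamma^u$ by forward iteration, bi-Lipschitz invariance and countable stability is fine and matches the paper's final step, and the identification of $\gamma'\cap\Lambda_B$ with a subset of $\gamma^u(\zeta_0)\cap\Lambda_B$ via the Lipschitz holonomy along $\Gamma^s$-leaves is also legitimate. The problem is the step you yourself flag as the hard part: the claim that any $z_0\in\gamma^u(\zeta_0)\cap\Omega\cap B$ must lie in $\Omega_\infty$, argued by saying that if $z_0\notin\Omega_\infty$ then the inductive construction of $\mathcal Q$ ``should produce some $\omega\in\mathcal Q$ at a later iterate containing $z_0$, forcing $\tau(z_0)<\infty$.'' This is not what the construction gives, and the implication is unjustified. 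By Sublemmas \ref{match} and \ref{match2} every element of $\mathcal Q$ is contained in $\Omega_\infty$, so the construction never captures points outside $\Omega_\infty$; a point with a close return at time $k$ (i.e.\ $f^kz_0\in\Theta_k$) is excluded from $\Omega_k$ and nothing forces its later orbit to land on $\mathcal W^s$. In particular a point can undergo close returns infinitely often and so belong to $B$ without ever being related to $\Lambda_B$; ruling out that such points carry Hausdorff dimension larger than $\varepsilon$ is precisely the content that your argument skips, and it cannot be obtained by contradiction from the construction of $\mathcal Q$.

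The paper handles this by a different decomposition inside Sublemma \ref{dimB}: writing $\gamma^u\cap B$ as the union of $\gamma^u\cap\Lambda_B$, the sets $(\Xi_k\setminus\Xi_{k+1})\cap B$ of points with exactly $k$ close returns, and $\Xi_\infty\cap B$ with infinitely many close returns. The first piece is Lemma \ref{bad}; the middle pieces split into countably many sets of the form $\{z\colon f^{m+n}z\notin\Theta_n\ \forall n\geq0\}\cap B$ and are estimated by repeating the argument of Lemma \ref{bad}; and $\Xi_\infty$ requires a genuinely new computation, exploiting that successive close return times satisfy $l_k\geq\xi^{k-1}$, the counting bound $\#\mathcal R(u_{k-1},l_k)\leq 2^{l_k}$, and bounded distortion, to show $\sum_{u_k\in\mathcal U_k}\ell(u_k)^{\varepsilon}\to0$, hence the $\varepsilon$-Hausdorff measure of $\Xi_\infty$ vanishes. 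Without an estimate of this type for the points that neither lie on long stable leaves nor ever enter $\Lambda$, your proof does not close; as written it would effectively assert $\gamma'\cap B$ is contained (up to holonomy) in $\Lambda_B$, which is not known and is exactly what the paper's extra analysis is designed to avoid assuming.
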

\begin{proof}
To estimate the Hausdorff dimension of the set $\gamma\cap B$ we decompose it into the union of points belonging to $\Lambda$, points of the complement whose orbit eventually belongs to $\Lambda$ and points of the complement which never enter $\Lambda$. The proof of Lemma \ref{bad} allows us to obtain an upper estimate of the Hausdorff dimension of this subset.
For the complement of this subset we need a different computation.

\begin{sublemma}\label{dimB}
For each $\gamma^u\in\tilde\Gamma^u$ we have
$\dim^u_H(\gamma^u\cap B)\leq\varepsilon$.
\end{sublemma}
\begin{proof}
Let $z\in\gamma^u\cap\Omega$.

We call $l>0$ a {\it close return time of $z$} if $l=\min\{i>0\colon f^iz\in\Theta_{i}\}$. Points in $\Lambda$ have no close returns. 
Let $l_1,l_2,\ldots$ be
defined inductively as follows: $l_1$ is the close return time
of $z$; given $l_1,\ldots,l_{k-1}$, let $l_{k}$ be the close
return time of $f^{l_1+\cdots+l_{k-1}}z$.
Obviously $l_k\geq\xi l_{k-1}+N$ and $l_1\geq1$, and so
\begin{equation}\label{close}l_k\geq\xi^{k-1}.\end{equation}
 Let $\Xi_k$ denote the set of $z\in\gamma^u$
 for which $l_1,\ldots,l_k$
are defined in this way. Set $\Xi_\infty :=\bigcap_{k\geq1}\Xi_k$.
Since $\gamma^u\setminus\Xi_{1}=\gamma^u\cap\Lambda$, one has that
$$\gamma^u\cap B=\left(\gamma^u\cap\Lambda_B\right)\cup \left(\bigcup_{k\ge 1}\Xi_k\setminus\Xi_{k+1}\cap B\right)\cup \left(\Xi_\infty\cap B\right),$$
and so
$$
\dim^u_H(\gamma^u\cap B)\leq\max\left\{\dim^u_H\left(\Lambda_B\cap\gamma^u\right), \sup_{k\geq1}\dim^u_H\left((\Xi_k\setminus\Xi_{k+1})\cap B
\right),\dim^u_H(\Xi_\infty\cap B)\right\}.
$$
By Lemma~\ref{bad} $\dim^u_H\left(\gamma^u\cap\Lambda_B \right)=d^u(\Lambda_B)\leq\varepsilon$. In addition, for each $k\geq1$ the set
$\Xi_k\setminus\Xi_{k+1}$ can be decomposed into a countable collection of connected components
$\{z\in{\gamma^u}\colon f^{m+n}z\notin\Theta_{n}\ \forall n\geq0\}\cap B$, for some $m\geq0$. 
In the same way as in the proof of Lemma \ref{exceptional}, one can show that
$\dim^u_H\left((\Xi_k\setminus\Xi_{k+1})\cap B
\right)\leq\varepsilon$
and
$$\dim^u_H(\gamma^u\cap B)\leq\max\{\varepsilon,\dim^u_H (\Xi_\infty)\}.
$$

To show $\dim^u_H(\Xi_\infty)\leq\varepsilon$,
let $\mathcal U_k$ denote the collection of 
connected components of $\Xi_k$.
For each $u_k\in\mathcal U_k$ there exist a sequence
$l_1<\cdots<l_k$ of positive integers 
and a nested sequence $u_1\supset\cdots\supset u_k$ 
of curves such that for each $i=1,\ldots,k$,
 $f^{l_1+\cdots+l_i}u_i$ is a $C^2(b)$-curve
connecting the two stable sides of $\Theta_{l_i}$.
For $u_{k-1}\in\mathcal U_{k-1}$ and $l_k>0$ let
$$\mathcal R(u_{k-1},l_k)=\{u_k\in \mathcal U_{k}\colon
\text{$l_k$ is a close return time of
points in $f^{l_1+\cdots+l_{k-1}}u_{k}$}\}.$$ By definition, $$\Xi_k=
\bigcup_{u_{k-1}\in\mathcal U_{k-1}}\bigcup_{l_k}\bigcup_{u_k\in\mathcal R(u_{k-1},l_k)
}u_{k},$$ where the second
union runs over all possible $l_k$. 
For each $u_k\in\mathcal R(u_{k-1},l_k)$, let
$\widehat u_k$ denote the curve in $u_{k-1}$
containing $u_k$ such that
$f^{l_1+\cdots+l_k}\widehat u_k\in\tilde\Gamma^u$. 
Since the distortion of $f^{l_1+\cdots+l_k}|\widehat u_k$
is uniformly bounded by Lemma \ref{markov}, we have
$$\frac{\ell(u_k)} {\ell(u_{{k-1}})}\leq\frac{\ell(u_k)} {\ell(\widehat u_k)}\leq
C\frac{\ell(f^{l_1+\cdots+l_k}u_k)}
{\ell(f^{l_1+\cdots+l_k}\widehat u_k)}\leq
C\sigma_1^{-\xi l_k}.$$
Set
$\eta:=2\sigma_1^{-\varepsilon\xi}.$
By the definition of $\xi$ in \eqref{xii}, $\eta<1$.
Using $\#\mathcal R(u_{k-1},l_k)\leq 2^{l_k}$ and \eqref{close}
we get
$$\sum_{l_k}\sum_{u_k\in\mathcal R(u_{k-1},l_k)}
\frac{\ell(u_k)^\varepsilon}{\ell(u_{k-1})^\varepsilon} \leq C^\varepsilon\sum_{l_k\geq \xi^{k-1}}
2^{l_k}\sigma_1^{-\varepsilon\xi l_k}\leq
C^\varepsilon\eta^{\xi^{k-1}}.$$ Hence
\begin{align*}\sum_{u_k\in\mathcal U_k}\ell(u_{k})^\varepsilon&=\sum_{u_{k-1}
\in\mathcal U_{k-1}}
\ell(u_{k-1})^\varepsilon\left(\sum_{l_k}\sum_{u_k\in\mathcal R(u_{k-1},l_k)   }
\frac{\ell(u_{k})^\varepsilon} {\ell(u_{k-1})^\varepsilon}\right)\leq
C^{\varepsilon}\eta^{\xi^{k-1}}\sum_{u_{k-1}\in\mathcal U_{k-1}}\ell(u_{k-1})^\varepsilon.\end{align*} 
Using this recursively yields
$$\sum_{u_k\in\mathcal U_k}\ell(u_{k})^\varepsilon\leq
C^{\varepsilon(k-1)}\eta^{\sum_{i=1}^{k-1}\xi^{i-1}}
\sum_{u_{1}\in\mathcal U_{1}}\ell(u_{1})^\varepsilon.$$
The right-hand-side goes to $0$ as $k\to\infty$, and thus
the Hausdorff $\varepsilon$-measure of $\Xi_\infty$ is $0$.
\end{proof}
A successive use of Lemma \ref{markov} implies that any point in
 $\gamma\cap \Omega$ is contained in a curve in $W^u$ which
 is mapped by some forward iterates
to a curve in $\tilde\Gamma^u$. Hence, the countable stability of Hausdorff dimension and 
Sublemma \ref{dimB} complete the proof of Lemma \ref{esr0}.
\end{proof}



\subsection{Proof of (P6)}\label{pos}
We recall a few general results
on stable and unstable manifolds of
nonuniformly hyperbolic systems from \cite{Pes76,Rue79}.
Since
 any $\mu\in\mathcal M^e(f)$ has one positive and one negative 
Lyapunov exponent \cite{CLR08}, these general results hold
for our system.

For any $\mu\in\mathcal M^e(f)$ there exist Borel subsets $\Gamma_1\subset\Gamma_2\subset\cdots\subset \Omega$ 
such that ${\rm supp}(\mu)=
\Gamma_\infty:=\bigcup\Gamma_n$
 and sequences of positive
numbers $\delta_n\gg \epsilon_n$, possibly $\to0$ as $n\to\infty$,
such that, for $x\in\Gamma_n$:
\begin{itemize}

\item[(N1)] the {\it unstable manifold} $W^u(x)$ 
of $x$ (see \eqref{unsm})
is an injectively immersed $C^2$ submanifold with
$T_xW^u(x)=E^u(x)$. An analogous statement holds for the {\it
stable manifold} $W^s(x)$.
\end{itemize}

Let $B^u_\delta(x)$ (resp. $B^s_\delta(x)$) denote
the ball of radius $\delta$ centered at the origin
 of $ T_x\mathbb R^2$ in $E^u(x)$ (resp. $E^s(x)$)
and $B_\delta(x):=B^u_\delta(x)\times B^s_\delta(x)$.
Let $\Gamma_n(x):=\{y\in\Gamma_n\colon
|x-y|<\epsilon_n\}$ and for $y\in\Gamma_n(x)$, let $W^u_x(y)$ denote
the connected component of
$\exp_x^{-1}\left(W^u(y)\cap\exp_x(B_{\delta_n}(x))\right)$ that
contains $\exp_x^{-1}y$. 
\medskip

\begin{itemize}

\item[(N2)] 
For all $y\in\Gamma_n(x)$,
$W^u_x(y)$ is the graph of a function $\varphi\colon
B_{\delta_n}^u(x)\to B_{\delta_n}^s(x)$ with
$\|D\varphi\|\leq\frac{1}{100}$, for a conveniently chosen
metric.
An analogous
statement holds for $W^s_x(y)$.

\item[(N3)]
For $z\in\bigcup_{y\in\Gamma_n(x)}W^s_x(y)$,
let $\mathcal F^s(z)$ denote the element of $\{W^s_x(y)\}_{y\in \Gamma_n(x)}$ which contains $z$.
Then $z\mapsto T_z\mathcal F^s(z)$ is
Lipschitz continuous.

\item[(N4)] 
The holonomy map $\pi\colon
\Sigma_1\cap\bigcup_{y\in\Gamma_n(x)}W_x^s(y)\to\Sigma_2 $
defined by $\pi(y)=W_x^s(y)\cap\Sigma_2$ for any 
graph 
$\Sigma_i$  $(i=1,2)$ of 
a $C^1$ function $\psi_i\colon B_{\delta_n}^u(x)\to
B_{\delta_n}^s(x)$ with $\|D\psi_i\|\leq\frac{1}{99}$
is bi-Lipschitz continuous. 
In particular, it preserves Hausdorff dimension.
\end{itemize}

\begin{remark}\label{holono}
{\rm 
Since ${\rm dim}\, E^u=1$  the
constant $\alpha$ in the {\it bunching condition} \cite[(19.1.1)]{KatHas95}
can be taken to be $1$. Then
(N3) follows from a slight modification of the proof of
\cite[Theorem 19.1.6]{KatHas95}. (N4) follows from (N3) and the fact that 
${\rm dim}\, E^s=1$.}
\end{remark}

Let $x\in\Gamma_\infty$.
 For each $n>0$
 consider a countable covering
 $\{\Gamma_n(z_i)\}_i$ of $\Gamma_n\cap W^u(x)$
 such that 
$\bigcup_{i} W^u_{\rm loc}(z_i)=\Gamma_n\cap W^u(x),$
where
$W_{\rm loc}^u(z_i):=\exp_{z_i} W^u_{z_i}(z_i)$.
Let
$B_i=W^u_{\rm loc}(z_i)\cap B$.
\begin{lemma}\label{dim1}
$\dim^u_H(B_i)\leq\varepsilon$.
\end{lemma}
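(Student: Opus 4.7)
The strategy is to reduce to Lemma~\ref{esr0} by transferring the Hausdorff dimension estimate from $W^u_{\rm loc}(z_i)$ to a piece of $W^u$ via the Pesin stable holonomy in (N4). First I would produce an open arc $\gamma\subset W^u$ that, in the Pesin chart at $z_i$, is the graph of a $C^1$ function over $B^u_{\delta_n}(z_i)$ with derivative at most $1/99$. Such a $\gamma$ is available because $z_i\in\Omega\subset\overline{W^u}$, so the Inclination Lemma applied at the fixed saddle ensures that forward iterates of short arcs of $W^u$ transverse to $W^s_{\rm loc}(P)$ accumulate on $W^u_{\rm loc}(z_i)$ in the $C^1$ topology, yielding the required $\gamma$ arbitrarily $C^1$-close to $W^u_{\rm loc}(z_i)$.

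Taking $\Sigma_1:=W^u_{\rm loc}(z_i)$ and $\Sigma_2:=\gamma$ in (N4) then supplies a bi-Lipschitz holonomy $\pi\colon\Sigma_1\cap\bigcup_{y\in\Gamma_n(z_i)}W^s_{z_i}(y)\to\gamma$ along Pesin stable manifolds. Since bi-Lipschitz maps preserve Hausdorff dimension, it suffices to establish $\pi(B_i)\subset\gamma\cap B$, whereupon Lemma~\ref{esr0} applied to the open set $\gamma\subset W^u$ yields $\dim^u_H(B_i)\leq\dim^u_H(\gamma\cap B)\leq\varepsilon$.

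The inclusion $\pi(B_i)\subset B$ I would derive from the $s$-saturation of $\Lambda$. By construction $\Lambda=\bigcup_{\gamma^u\in\Gamma^u,\gamma^s\in\Gamma^s}\gamma^u\cap\gamma^s$, and by Lemma~\ref{leaf}(a) the family $\Gamma^s$ is $f$-invariant, so whether $f^ny\in\Lambda$ depends only on the long stable leaf through $f^ny$ and on which $\Gamma^u$-curves it meets. The uniqueness statement of Lemma~\ref{leaf} forces $y$ and $\pi(y)$ to lie on a common long stable leaf, and this property persists under forward iteration; therefore $f^ny$ and $f^n\pi(y)$ meet the same $\gamma^u$-curves at the same times, which gives $\tau(y)=\infty$ if and only if $\tau(\pi(y))=\infty$.

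The main obstacle is ensuring this $s$-saturation argument applies uniformly over $B_i$, rather than only over $B_i\cap\Omega_\infty$, since Lemma~\ref{leaf} provides long stable leaves only for points of $\Omega_\infty$. For $y\in B_i\setminus\Omega_\infty$, or for $y$ that fall outside the Pesin chart of $z_i$, I would exhaust $B_i$ by a countable family of Pesin charts based at different reference points and invoke the countable stability of Hausdorff dimension. Should the identification of $W^s(y)$ with a segment of a $\gamma^s$-leaf break down on a residual set, a fallback is to iterate $W^u_{\rm loc}(z_i)$ forward until it attains macroscopic length inside $\Theta$ and then rerun the combinatorial close-return argument of Sublemma~\ref{dimB} directly on the expanded curve, exploiting the forward invariance of $B$ together with the local bi-Lipschitz character of $f$ and the expansion/distortion estimates of Lemma~\ref{markov}.
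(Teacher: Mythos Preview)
Your overall strategy---holonomy along Pesin stable leaves from $W^u_{\rm loc}(z_i)$ to a piece of $W^u$, then invoke Lemma~\ref{esr0}---matches the paper's, but two steps are not justified as written.

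\textbf{Producing the arc $\gamma\subset W^u$.} Your appeal to the Inclination Lemma ``at the fixed saddle'' is misdirected: that lemma makes iterates of transversals accumulate on $W^u(P)$, not on $W^u_{\rm loc}(z_i)$. Knowing only that $z_i\in\overline{W^u}$ gives arcs of $W^u$ near $z_i$, but not arcs that are graphs over $B^u_{\delta_n}(z_i)$ with slope $\leq 1/99$; near tangencies $W^u$ can approach $z_i$ folded or nearly tangent to $E^s$. The paper fixes this with Katok's closing lemma: it produces a hyperbolic periodic point $p_i\in\Gamma_n(z_i)$ whose local unstable manifold is a graph with slope $\leq 1/100$ in the chart. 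Since $W^s(p_i)$ meets $W^u$ transversally, the Inclination Lemma \emph{at $p_i$} then yields a component of $W^u$ in the chart that is a graph with slope $\leq 1/99$. Without this periodic saddle there is no mechanism forcing $W^u$ to align with $E^u(z_i)$.

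\textbf{Why $\pi$ preserves $B$.} You invoke the uniqueness statement of Lemma~\ref{leaf} to put $y$ and $\pi(y)$ on a common long stable leaf, but Lemma~\ref{leaf} only supplies long stable leaves through points of $\Omega_\infty$, and a generic $y\in B_i$ need not lie in $\Omega_\infty$. The holonomy $\pi$ is along \emph{Pesin} local stable manifolds $W^s_{z_i}(y)$, which are a priori different objects from the leaves $\gamma^s\in\Gamma^s$. The paper's Sublemma~\ref{esr1} bridges this gap directly: if $x\notin B$ then $f^kx\in\gamma^s$ for some $\gamma^s\in\Gamma^s$ and $k\geq0$; since $\gamma^s\subset W^s(f^kx)$ and the global stable manifold is one-dimensional, one has $W^s_{\rm loc}(f^kx)\subset\gamma^s$ (otherwise $W^s_{\rm loc}(f^kx)$ would contain points escaping to infinity). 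As $\pi(x)\in W^s_{\rm loc}(x)$, it follows that $f^k\pi(x)\in f^kW^s_{\rm loc}(x)\subset W^s_{\rm loc}(f^kx)\subset\gamma^s$, whence $\pi(x)\notin B$. This argument needs no assumption that $y\in\Omega_\infty$, so your proposed fallbacks (exhaustion by charts, rerunning the combinatorics of Sublemma~\ref{dimB}) are unnecessary once the correct identification is made.
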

\begin{proof}
By Katok's closing lemma \cite[Theorem S.4.13]{KatHas95}, there exists 
 a periodic saddle
$p_i\in\Gamma_n(z_i)$ such that
$W_{z_i}^u(p_i)$ is the 
graph of a function
$\varphi\colon B_{\delta_{n}}^u(z_i)\to B_{\delta_{n}}^s(z_i)$
with $\|D\varphi\|\leq\frac{1}{100}$. Since $W^s(p_i)$ and $W^u$ have
transverse intersections, the Inclination Lemma implies the existence
of a connected component of $\exp_{z_i}^{-1}(W^u)\cap B_{\delta_{n}}(z_i)$ that is
the graph of a function $\psi\colon B_{\delta_{n}}^u(z_i)\to
B_{\delta_{n}}^s(z_i)$ with $\|D\psi\|\leq\frac{1}{99}$. 
Let $\pi$ denote the holonomy map between $W^u_{\rm loc}(z_i)$ and $\exp_{z_i}(\rm{graph}(\psi))$.

\begin{sublemma}\label{esr1} $\pi(x)\in B$ if and only if $x\in B$.
\end{sublemma}
\begin{proof}
If $x\notin B$ there
exist $k\geq0$ and $\gamma^s\in\Gamma^s$ such that
$f^kx\in\gamma^s$. We have
$f^kW^s_{\rm loc}(x)\subset W^s_{\rm loc}(f^kx)$ and
$\gamma^s\subset W^s(f^kx)$.
We have
$W^s_{\rm loc}(f^kx)\subset\gamma^s$,
for otherwise $W^s_{\rm loc}(f^kx)$
contains points that escape to infinity.
Since both $x$ and $\pi(x)$ belong to $W^s_{\rm loc}(x)$ then $f^k(\pi (x))\in\Gamma^s$ so $\pi(x)\notin B$. The same reasoning yields the converse.
\end{proof}

By Sublemma \ref{esr1}, $\pi(B_i)\subset B$
and Lemma \ref{esr0} gives $\dim^u_H(\pi(B_i))\leq\varepsilon$.
(N4) yields $\dim^u_H(B_i)\leq\varepsilon$. 
\end{proof}

\noindent{\it Proof of (P6).}
Take $\mu\in\mathcal M^e(f)$ with $h(\mu)\geq 2\varepsilon$.
Observe that since $\Gamma_n\cap W^u(x)\cap B\subset \bigcup_{i} B_i$, Lemma \ref{dim1} yields
 $\dim^u_H(\Gamma_n\cap W^u(x)\cap B)\leq\varepsilon$ for every $n>0$,
 and thus 
 $\dim^u_H(\Gamma_\infty\cap W^u(x)\cap B)\leq\varepsilon$.
Let $\{\mu^u_x\}_{x\in\Gamma_\infty}$ denote the
canonical system of conditional measures of $\mu$ along
unstable manifolds.
The dimension formula \cite{LedYou85} gives
$\dim(\mu^u_x)=\dim^u(\mu)=\frac{h(\mu)}{\lambda^u(\mu)}>\varepsilon$,
and thus 
$\mu^u_x(W^u(x)\cap\Gamma_\infty\cap  B)<1$ and 
$\mu^u_x((W^u(x))^c\cup\Gamma^c_\infty\cup B^c)>0$. Since $\mu^u_x((W^u(x))^c)=0=\mu^u_x(\Gamma_\infty^c)$ we have $\mu(B^c)=\int_{x\in\Gamma_\infty}\mu^u_x( B^c)d\mu(x)>0$.
The $f$-invariance of $\mu$ yields $\mu(\Lambda)>0.$

Poincar\'e recurrence gives $\mu(X)>0$. Since $F$ is the first
return map to $X$, Kac's formula \cite[Theorem 2']{Kac47} gives
$\int_{X} \tau d\mu=1,$ and so $\tau$ is
$\mu$-integrable. By \cite{Zwe04},
$\mu$ is liftable.
 $\hfill\qed$

\section{Proofs of the theorems}
In this last section we prove Theorems A, B, C, D.
In Sect.\ref{ss} we show that the induced potential
$\overline{\varphi_t}\colon X\to\mathbb R$ has
strongly summable variations and finite Gurevich pressure.
Prior to Theorem A, we  prove Theorem B in Sect.\ref{uhd}.
In Sect.\ref{posrec} we define two numbers $t_-<0<t_+$ and show that $\overline{\varphi_t}$ is positive recurrent
for any $t\in(t_-,t_+)$.
From Proposition \ref{mainthemorem}
it follows that for any $t\in(t_-,t_+)$ there exists a unique measure which maximizes $F_{\varphi_t}$ among measures which are liftable to the inducing scheme. In Sect.\ref{range} we complete the proof of Theorem A by showing that this candidate measure is indeed an equilibrium measure for $\varphi_t$.
Theorem C and Theorem D are proved in Sect.\ref{c} and Sect.\ref{d}.

\subsection{Strong summability and finite Gurevich pressure}\label{ss}

By (P2), the unstable direction $E^u$ (see the definition in \eqref{eu}) makes sense on each $\gamma^u\in\Gamma^u$,
and coincides with its tangent directions.
Hence, for each $z\in X$  we have $\sum_{i=0}^{\tau(J)-1}J^u(f^iz)=\|D_zF|E_z^u\|$. We now prove that the induced potential function 
$\overline{\varphi_t}(z):=-t\log\|D_zF|E_z^u\|$
has strongly summable variations (i.e. the potential $t\Phi=\overline{\varphi_t}\circ h$ has strongly summable variations).

\begin{lemma}\label{summable}
There exists $C>0$ such that for every $n>0$, $V_n(\Phi)\leq Cb^{-1}\sigma_1^{-n}.$ In particular, $t\Phi$ has strongly summable variations
for any $t\in\mathbb R$.
\end{lemma}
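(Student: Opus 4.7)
The plan is to bound $V_n(\Phi)$ by comparing $\log\|D_zF|E^u_z\|$ and $\log\|D_{z'}F|E^u_{z'}\|$ for $z=h(\underline a)$, $z'=h(\underline{a'})$ with $\underline a,\underline{a'}$ in the same cylinder $[b_{-n+1},\ldots,b_{n-1}]$. Denote by $\gamma^u_0,\gamma^s_0$ the leaves of $\Gamma^u,\Gamma^s$ through $z$ and by $\gamma^{u'}_0,\gamma^{s'}_0$ those through $z'$. The forward and backward matching of length $n-1$ forces $z$ and $z'$ to lie in the $s$-sublattice $Q_n^s(\underline a)$ and the $u$-sublattice $Q_n^u(\underline a)$ already used in the verification of (A2); the first is exponentially thin in the $\gamma^u$-direction and the second in the $\gamma^s$-direction. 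I introduce the intermediate point $z^+:=\gamma^u_0\cap\gamma^{s'}_0$, which also belongs to $Q_n^s(\underline a)$ and hence shares the forward itinerary of $z$ and $z'$.

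The contribution from $z$ to $z^+$ is the easier piece. Since $F^{n-1}z$ and $F^{n-1}z^+$ lie in $b_{n-1}$ on the same $\gamma^u$-leaf of uniformly bounded $\gamma^u$-diameter, and since by (P3)(a) the composition $F^{n-2}$ applied after $F$ expands $\gamma^u$-vectors by at least $\sigma_1^{n-2}$, one obtains $|Fz-Fz^+|\leq C\sigma_1^{-(n-2)}$. The $\gamma^u$-distortion bound (P4)(a) applied to $F=f^{\tau(b_0)}$ then gives
$$\bigl|\log\|D_zF|T_z\gamma^u_0\|-\log\|D_{z^+}F|T_{z^+}\gamma^u_0\|\bigr|\leq C|Fz-Fz^+|\leq C'\sigma_1^{-n}.$$

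The contribution from $z^+$ to $z'$ is more delicate because the unstable directions $T_{z^+}\gamma^u_0$ and $T_{z'}\gamma^{u'}_0$ are tangent to distinct $\gamma^u$-leaves, so (P4)(b) by itself gives only a uniform factor-of-two bound. To extract decay I would write the log-Jacobian as the telescoping sum $\log\|D_\cdot F|T\gamma^u\|=\sum_{j=0}^{\tau(b_0)-1}\log J^u(f^j\cdot)$ and compare term by term, using three ingredients: (i) the $\gamma^s$-contraction (P3)(b) yields $|f^jz^+-f^jz'|\leq(Cb)^{j/2}|z^+-z'|$, so the telescoped sum converges geometrically; (ii) the $\sqrt b$-Lipschitz continuity of stable tangent spaces from Lemma \ref{leaf}(c), combined with the uniform transversality between $\gamma^u$- and $\gamma^s$-leaves in Definition \ref{deflattice}, controls the variation of the unstable direction across $\gamma^s$; (iii) the backward matching forces $\gamma^u_0$ and $\gamma^{u'}_0$ to have exponentially small vertical separation, so $|z^+-z'|$ is exponentially small in $n$. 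Combining these yields
$$\bigl|\log\|D_{z^+}F|T_{z^+}\gamma^u_0\|-\log\|D_{z'}F|T_{z'}\gamma^{u'}_0\|\bigr|\leq Cb^{-1}\sigma_1^{-n},$$
where the $b^{-1}$ prefactor arises when converting the $\sqrt b$-angle bound of Lemma \ref{leaf}(c) into a Jacobian bound against the stable contraction rate.

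Adding the two estimates by the triangle inequality and absorbing $|t|$ into the constant produces $V_n(\Phi)\leq Cb^{-1}\sigma_1^{-n}$, from which the strong summability $\sum_{n\geq 1}nV_n(t\Phi)\leq|t|Cb^{-1}\sum_{n\geq 1}n\sigma_1^{-n}<\infty$ follows for every $t\in\mathbb R$. I expect the main obstacle to be the second step: one must carefully trade the exponential $\gamma^s$-closeness of $z^+$ and $z'$ against the weak ($\sqrt b$-type) regularity of the unstable direction across $\gamma^s$, and it is exactly in this balance that the $b^{-1}$ factor in the final bound is generated.
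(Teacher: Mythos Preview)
Your approach is essentially the paper's: introduce the intermediate point $z^+=\gamma^u(z)\cap\gamma^s(z')$, bound the first piece via (P4)(a) and (P3)(a), and handle the second piece by telescoping $\sum_{j}\log J^u(f^j\cdot)$ along the common stable leaf. The first piece is exactly as in the paper.

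For the second piece your ingredient (ii) is misdirected. Lemma~\ref{leaf}(c) controls the variation of \emph{stable} tangent directions, and transversality alone does not convert this into a bound on the variation of $E^u$ across a $\gamma^s$-leaf. What the paper actually uses here is twofold: first, the backward $(n-1)$-matching forces the initial angle $\angle(e^u(z^+),e^u(z'))\leq (Cb)^{n/2}$; second, the angle estimate of \cite[Claim 5.3]{WanYou01} (which needs the lower bound $\|D_zf^je^u(z)\|\geq\tfrac{1}{2}\kappa^j$ coming from (P4)(b) and Sublemma~\ref{appendix2}) propagates this forward as $\angle(D_{z^+}f^je^u(z^+),D_{z'}f^je^u(z'))\leq (Cb)^{(j+n)/2}$. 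Combined with $\|D_{f^jz^+}f-D_{f^jz'}f\|\leq C|f^jz^+-f^jz'|\leq(Cb)^{(j+n)/2}$ from the stable contraction, this yields $|\log J^u(f^jz^+)-\log J^u(f^jz')|\leq (Cb)^{(j+n)/2-1}$, and the $b^{-1}$ appears because $J^u\geq b/5$. Your ingredients (i) and (iii) are fine, but (ii) should be replaced by this angle-propagation argument; once you do so, your sketch and the paper's proof coincide.
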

\begin{proof}
Take $\underline{a},\underline{a}' \in S^{\mathbb Z}$ such that
$a_i=a_i'$ provided $|i|\leq n-1$. Let $x=h(\underline{a})$, $x'=h(\underline{a'})$.
Let $y$ denote the point of intersection between $\gamma^u(x)$ and $\gamma^s(x')$. 
We have \begin{align*}
|\Phi(\underline{a})-\Phi(\underline{a'})|=\left|
\log \frac{\|D_xF|E^u_x\|}{\|D_{x'}F|E^u_{x'}\|}\right|\le 
\left|\log \frac{\|D_xF|E^u_x\|}{\|D_yF|E^u_y\|}\right|+ 
 \left|\log\frac{\|D_yF|E^u_y\|}{\|D_{x'}F|E^u_{x'}\|}\right|.
\end{align*}
By the $F$-invariance of the $\gamma^s$-curves, 
$F^iy\in\gamma^s(F^ix')$ for $ 0\leq i\leq n-1$. 
By the $F$-invariance of the $\gamma^u$-curves, 
$F^ix$ and $F^iy$ belong to the same $\gamma^u$-curves for $0\leq i\leq n-1$.
(P3) implies
$|Fx-Fy|\leq
\sigma_1^{-n}$.
Using this and (P4)(a) we have
\begin{equation}\label{sumeq1}
\left|\log \frac{\|D_xF|E^u_x\|}{\|D_yF|E^u_y\|}\right|
\leq
 C|Fx-Fy|\leq
C\sigma_1^{-n}.\end{equation}

For $z\in \Gamma^u$ let $e^u(z)$ denote the unit vector with a positive first component which spans 
$E^u_z$. 
From the bounded distortion in (P4)(b) and the proof of Lemma \ref{leaf} in Appendix A2 we have
$\|D_zf^je^u(z)\|\geq(1/2)\kappa^{j}$
for every $j\geq1$, where  $\kappa=5^{-(1+\xi)N}$.
Then the angle estimate in
\cite[Claim 5.3]{WanYou01} yields
$$\angle(D_yf^ie^u(y),D_{x'}f^ie^u(x'))\leq (Cb)^{\frac{i}{2}}\angle(e^u(y),e^u(x'))\leq (Cb)^{\frac{i+n}{2}}.$$
From the contraction along the $\gamma^s$-curves we have
$$\| D_{f^iy}f-D_{f^ix'}f  \|\leq C|f^iy-f^ix'|\leq(Cb)^{\frac{i}{2}}|y-x'|\leq (Cb)^{\frac{i+n}{2}}.$$
Hence
\begin{align*}\left| \log\frac{J^u
(f^{i}y)}{J^u(f^ix')}\right|&\leq Cb^{-1}
\left\|\frac{D_yf^{i+1}e^u(y)}{\|D_yf^ie^u(y)\|}-
\frac{D_{x'}f^{i+1}e^u(x')}{\|D_{x'}f^ie^u(x')\|}\right\|\\
&\leq Cb^{-1}\left(\| D_{f^iy}f-D_{f^ix'}f  \|+C\angle(D_yf^ie^u(y),D_{x'}f^ie^u(x')) \right)\\
&\leq (Cb)^{\frac{i+n}{2}-1}.
\end{align*}
The first inequality follows from the fact that $|\log(1+\psi)|\leq|\psi|$ for $\psi\geq0$ and $J^u\geq b/5$.
The second one follows from the triangle inequality. Then
\begin{equation}\label{sumeq2}\left|\log\frac{\|D_yF|E^u_y\|}{\|D_{x'}F|E^u_{x'}\|}\right|=\sum_{i=0}^{\tau(x)-1}\left| \log\frac{J^u
(f^{i}y)}{J^u(f^ix')}\right|\leq\sum_{i=0}^{\tau(x_0)-1}
(Cb)^{\frac{i+n}{2}-1}\leq (Cb)^{\frac{n}{2}-1}.\end{equation}
\eqref{sumeq1} \eqref{sumeq2} yield the desired inequality.
\end{proof}

We show the finiteness of the Gurevich pressure of the induced potential of a \emph{"shifted"} potential.
For $t,c\in\mathbb R$ define
$$T_{t,c}=\sum_{J\in S}e^{c\tau(J)}
\ell(J)^{t}$$
and
\begin{align*}
c_0(t)=\begin{cases}
& t\log \sigma_2-
\displaystyle{\varlimsup_{n\to\infty}}(1/n)\log
S(n)\ \ \text{if} \ \ t<0;\\
&t\log\sigma_1-
\displaystyle{\varlimsup_{n\to\infty}}(1/n)\log
S(n)\ \ \text{if} \ \ t\geq0.
\end{cases}\end{align*}
By (P3)(a), for some $C>0$ we have
\begin{equation}\label{ttc}
T_{t,c}\leq
\begin{cases}
C\sum_{n>1}S(n)e^{cn}\sigma_2^{-tn}
\text{ if }t<0;&\\
C\sum_{n>1}S(n)e^{cn}\sigma_1^{-tn}
\text{ if }t\geq0.
\end{cases}
\end{equation}

\begin{lemma}\label{finiteG}
If
$c<c_0(t),$ then $T_{t,c}<\infty$ and thus
$P_G\left(\overline{\varphi_t+c}\right)<\infty$.
\end{lemma}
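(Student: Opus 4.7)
The plan is to first establish the convergence of $T_{t,c}$ and then derive finiteness of $P_G$ via a standard periodic-orbit upper bound on a countable full shift.

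For the first step, my idea is to compare $\ell(J)$ with $\|D_zF|E^u_z\|^{-1}$ for $z$ on the unstable side of the rectangle $Q_J$. Condition (P4)(a) (bounded distortion of $f^{\tau(J)}$ on unstable leaves) together with (P1) (which guarantees that $f^{\tau(J)}Q_J$ is a $u$-sublattice of $\Lambda$, and therefore has unstable diameter pinched between two universal positive constants) gives, after invoking (P3)(a),
$$
C^{-1}\sigma_2^{-\tau(J)} \leq \ell(J) \leq C\sigma_1^{-\tau(J)}.
$$
Grouping the elements of $S$ by their inducing time yields \eqref{ttc},
$$
T_{t,c} \leq C\sum_{n\geq 1}S(n)e^{cn}\sigma_i^{-tn},
$$
with $i=1$ for $t\geq 0$ and $i=2$ for $t<0$. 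By the root test, the right-hand side is finite exactly when $c + \varlimsup(1/n)\log S(n) - t\log\sigma_i < 0$, which is precisely the hypothesis $c < c_0(t)$. Hence $T_{t,c} < \infty$.

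For the second step, set $\Phi := \overline{\varphi_t + c}\circ h$ and $L(J) := \sup_{[J]} e^{\Phi}$. By the same comparison, $\sum_{J\in S} L(J)$ is controlled by the same majorant $C\sum_n S(n)e^{cn}\sigma_i^{-tn}$, and is therefore finite. Since Lemma~\ref{summable} gives summable variations for $\Phi$, I would invoke the classical periodic-orbit upper bound on a countable full shift: for any reference symbol $J_0\in S$, bounding $\Phi(\sigma^k\underline{a}) \leq \sup_{[a_k]}\Phi$ coordinatewise along a periodic orbit of period $n$ based at $J_0$, and then summing independently over the free symbols $a_1,\ldots,a_{n-1}\in S$, yields
$$
Z_n(\Phi, J_0) := \sum_{\sigma^n\underline{a}=\underline{a},\ a_0 = J_0} e^{S_n\Phi(\underline{a})} \leq L(J_0)\Bigl(\sum_{J\in S}L(J)\Bigr)^{n-1}.
$$
Taking logs, dividing by $n$ and letting $n\to\infty$ gives $P_G(\overline{\varphi_t+c}) \leq \log\sum_{J\in S}L(J) < \infty$, as required.

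The conceptually non-trivial step is pinning down the lower bound $\ell(J)\geq C^{-1}\sigma_2^{-\tau(J)}$, which is only needed when $t<0$: it hinges on $f^{\tau(J)}Q_J$ having unstable diameter bounded below by a universal constant, coming from the geometry of the lattice $\Lambda$ inside $\Theta$. Once this lower bound is in hand, both halves of the statement fall out of the same geometric-series estimate, and the rest is bookkeeping of uniform constants propagated through (P3)(a), (P4)(a), and Lemma~\ref{summable}.
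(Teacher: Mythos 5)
Your proposal is correct and follows essentially the same route as the paper: the comparison $C^{-1}\sigma_2^{-\tau(J)}\leq\ell(J)\leq C\sigma_1^{-\tau(J)}$ via (P1), (P3)(a), (P4)(a) combined with the counting bound (P5) is exactly how the paper obtains \eqref{ttc} and the convergence of $T_{t,c}$, and your per-symbol supremum bound $Z_n(\Phi,J_0)\leq L(J_0)\bigl(\sum_{J\in S}L(J)\bigr)^{n-1}$ is the symbolic form of the paper's estimate $P_G(\overline{\varphi_t+c})\leq\log(C\,T_{t,c})<\infty$. No substantive difference.
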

\begin{proof}
In the case $t\geq0$, using the second alternative of \eqref{ttc} and (P5) we have
$$ T_{t,c}\leq
C\sum_{n>1}
\exp\left(n \left(c-t\log\sigma_1+\frac{1}{n}\log S(n)\right)\right)
<\infty.$$
The case $t<0$ can be handled similarly.

As for the Gurevich pressure, fix $J_0\in S$. Observe that $\overline{\varphi_t+c}=
-t\log\|Df^{\tau}|E^u\|+c\tau$ and so, 
\begin{align*}
P_G\left(\overline{\varphi_t+c}\right)&=\lim_{n\to\infty}\frac{1}{n}\log\sum_{
\stackrel{x\in \gamma^u(\zeta_0)\cap J_0}{F^nx\in\gamma^s(x)}}\exp\left(\sum_{i=0}^{n-1}
\overline{(\varphi_t+c)}(F^ix)\right)\\
&\leq \lim_{n\to\infty}\frac{1}{n}\log\left(\sum_{J\in S}\sup_{x\in J}\exp\overline{(\varphi_t+c)} (x)\right)^n\leq\lim_{n\to\infty}\frac{1}{n}\log (C\cdot T_{t,c})^n=\log T_{t,c}<\infty, \end{align*}
where $C>0$ is a uniform constant.
\end{proof}

\subsection{Unstable Hausdorff dimension of $\Omega$}\label{uhd}
Before proceeding to the proof of Theorem B we need a couple of lemmas.
\begin{lemma}\label{tulow}
$t^u>\frac{\log2}{\log5}$.
\end{lemma}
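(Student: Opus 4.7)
The plan is to reduce $t^u > \log 2/\log 5$ to the strict positivity of the pressure at $t = \log 2/\log 5$. Since $P$ is convex with $P(0) > 0$, $P(1) < 0$ and a unique zero $t^u$ in $(0,1)$, $P$ is strictly positive on $[0, t^u)$. Thus proving the lemma is equivalent to exhibiting an invariant measure $\mu \in \mathcal M(f)$ with $h(\mu) - (\log 2/\log 5)\,\mu(\log J^u) > 0$.

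First I would establish the pointwise upper bound $\sup_{z\in\Omega} J^u(z) < 5$ for small enough $\varepsilon$ and $b$. Writing the unit unstable vector as $e^u(z) = (1, s)/\sqrt{1+s^2}$ with $|s| \leq \sqrt{b}$ (coming from the $C^2(b)$ property of $\gamma^u$-curves in $\Gamma^u$), an explicit computation with the H\'enon derivative yields
\[
J^u(z) = \|Df_z \, e^u(z)\| \leq \sqrt{(2a|x| + b)^2 + b}
\]
for $z = (x,y) \in \Omega$. Using $a^* = 2 + O(\varepsilon)$ together with the inclusion $\Omega \subset R$, where $R$ lies in a small neighborhood of $[-1,1] \times \{0\}$ so that $|x| \leq 1 + O(\sqrt{b})$, this reduces to $J^u \leq 4 + O(\varepsilon + \sqrt{b})$, which is strictly less than $5$ for all sufficiently small $\varepsilon$ and $b$.

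Next I would produce an $f$-invariant measure with entropy at least $\log 2$. The semi-conjugacy $\pi: \Omega \to \{0,1\}^{\mathbb Z}$ constructed in \cite{SenTak11} is a continuous surjection intertwining $f|\Omega$ with the full $2$-shift $\sigma$. A standard compactness argument then produces an $f$-invariant probability measure $\mu$ on $\Omega$ whose pushforward $\pi_*\mu$ equals the Bernoulli $(1/2,1/2)$-measure on $\{0,1\}^{\mathbb Z}$. Since $\pi$ is a factor map, $h_\mu(f) \geq h_{\pi_*\mu}(\sigma) = \log 2$.

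Combining the two steps,
\[
P(\log 2/\log 5) \geq h_\mu(f) - (\log 2/\log 5)\,\mu(\log J^u) \geq \log 2 - (\log 2/\log 5)\sup_{\Omega}\log J^u > 0,
\]
where the final strict inequality uses $\sup_\Omega J^u < 5$ established in the first step. I expect the main technical point to be that first step: the pointwise estimate $\sup_\Omega J^u < 5$ rests on control of the $x$-coordinates of points in $\Omega$, which should follow from the proximity of $f$ to the Chebyshev limit $f_0(x,y) = (1 - 2x^2, 0)$ via standard H\'enon-like perturbation estimates, but needs to be made quantitative to ensure that the constant $5$ gives strict room.
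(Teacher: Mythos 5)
Your argument is correct in substance, but it takes a genuinely different route from the paper. The paper's proof exploits convexity of $t\mapsto P(t)$: it extrapolates the chord through $(0,\log 2)$ and $(t^u,0)$ to $t=-1$, getting $\log 2+(1/t^u)\log 2\leq P(-1)$, and then bounds $P(-1)<\log2+\log5$ using $h(\mu)\leq\log2$ together with $\Vert Df\Vert<5$ on $R$; so it leans on the \emph{upper} entropy bound at $t=-1$. You instead bound the pressure from below directly at $t=\log2/\log5$ via the variational principle, using the \emph{lower} entropy bound $h_{\rm top}(f|\Omega)\geq\log2$ and the strict estimate $\sup_\Omega J^u<5$, and then conclude from the sign of $P$. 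Both proofs rest on the same two inputs (entropy $\log2$ and the derivative bound $5$), combined differently; yours is arguably more elementary in that it needs no convexity, only the fact that $P(t)\leq0$ for $t\geq t^u$ (which follows from monotonicity of $P$, i.e.\ $\lambda^u(\mu)\geq\log(2-\varepsilon)>0$, a fact the paper records in the proof of Theorem B) --- note that your stated reduction via ``$P>0$ on $[0,t^u)$'' gives only one direction of the equivalence, and it is this complementary monotonicity fact that your final step actually uses. Two smaller remarks: your refined computation $J^u\leq4+O(\varepsilon+\sqrt b)$ is not needed, since $J^u\leq\Vert Df\Vert<5$ on $R$ (the bound the paper itself invokes, with compactness of $\Omega\subset R$ giving strictness) already suffices; and your second step should be stated more carefully, because the coding of \cite{SenTak11} is only asserted to be a semi-conjugacy between $f|\Omega$ and the full $2$-shift --- if the continuous surjection goes from the shift onto $\Omega$ rather than the other way, the ``lift the Bernoulli measure'' argument must be replaced either by pushing the Bernoulli measure forward together with a finite-to-one entropy-preservation argument, or simply by citing $h_{\rm top}(f|\Omega)=\log2$ (Bedford--Smillie \cite{BedSmi04}), which the paper itself uses in the proof of Corollary \ref{lambdatu}; with entropy arbitrarily close to $\log2$ your strict inequality still goes through.
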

\begin{proof}
Consider the line through the two points $(0,\log2)$ and
$(t^u,0)$ on the pressure curve $\{(t,P(t))\in\mathbb R^2\colon t\in\mathbb R\}$. 
The point $(-1,(1/t^u)\log2+\log2)$ lies on this line.
Since the pressure curve is concave up,  we
have $(1/t^u)\log2+\log2\leq P(-1)$.
Since $\|Df\|<5$ on $R$ we have $P(-1)<\log 2+\log5$,
and thus the desired inequality holds.
\end{proof}

For $\mu\in\mathcal M(f)$, let $$\lambda^u(\mu)=\int\log J^ud\mu.$$ 
A proof of the next lemma is given in Appendix A3.
\begin{lemma}\label{Lyap}
$\inf\{\lambda^u(\mu)\colon\mu\in\mathcal M^e(f)\}\geq\log(2-\varepsilon)$.
\end{lemma}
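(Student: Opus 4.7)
The plan is to combine the hyperbolicity of the induced map (P3)(a) with Abramov's formula, treating liftable ergodic measures uniformly and reducing non-liftable ergodic measures to direct computation outside the critical region.

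For the principal case, suppose $\mu\in\mathcal M^e(f)$ satisfies $\mu(\Lambda)>0$. The argument at the end of Sect.~\ref{pos}---Poincar\'e recurrence, Kac's formula, and \cite{Zwe04}---produces a lift $\nu\in\mathcal M(F)$ of $\mu$ with $\nu(\tau)<\infty$. Since the tangent direction of each $\gamma^u\in\Gamma^u$ coincides with $E^u$, Abramov's formula \cite[Theorem 2.3]{PesSen08} yields
\begin{equation*}
\lambda^u(\mu)=\int\log J^u\,d\mu=\frac{1}{\nu(\tau)}\int_X\log\|DF|E^u\|\,d\nu.
\end{equation*}
The pointwise hyperbolicity estimate (P3)(a) gives $\log\|D_zF|E^u_z\|\geq\tau(J)\log\sigma_1$ for every $z\in J\in S$; integrating against $\nu$ and using $\int_X\tau\,d\nu=\nu(\tau)$ at once produces $\lambda^u(\mu)\geq\log\sigma_1=\log(2-\varepsilon)$.

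For a non-liftable $\mu\in\mathcal M^e(f)$---equivalently $\mu(\Lambda)=0$, which by (P6) forces $h(\mu)<2\varepsilon$---the $f$-invariance gives $\mu\bigl(\bigcup_{n\geq 0}f^{-n}\Lambda\bigr)=0$, so $\mu$-a.e.\ forward orbit never enters $\Lambda$. The plan would be to show that such orbits are trapped in $\overline{R\setminus\Theta}$ from some time onward, where the first coordinate is bounded away from $0$ by a definite constant. Since $E^u$ makes angle $\leq\sqrt b$ with the horizontal, the explicit form of $Df_z$ for the H\'enon map yields the pointwise bound $\|D_zf|E^u_z\|\geq 2a^*(b)|x|(1-O(\sqrt b))\geq 2-\varepsilon$ on this region, and the Birkhoff ergodic theorem applied to $\log J^u$ closes the case. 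The Dirac measures $\delta_P$ and $\delta_Q$ are handled by direct computation of the unstable eigenvalues of $Df_P$ and $Df_Q$, whose absolute values approach $2$ and $4$ respectively as $b\to 0$, both exceeding $2-\varepsilon$.

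The principal obstacle is the non-liftable case: the estimate (P3)(a) controls only returns to $\Lambda$, so an orbit avoiding $\Lambda$ entirely demands a different treatment. One must first pin down the location of $\mathrm{supp}(\mu)$ for any such $\mu$ and then verify the required pointwise hyperbolicity there; the geometry near the fixed saddle $P$, which sits on the boundary of $\Theta$, requires particular care.
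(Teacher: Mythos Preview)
Your liftable case via Abramov and (P3)(a) is correct. The non-liftable case, however, has a real gap: the plan to show that orbits avoiding $\Lambda$ are eventually trapped in $\overline{R\setminus\Theta}$ is false. Points with infinitely many close returns---the set $\Xi_\infty$ in the proof of Sublemma~\ref{dimB}---enter $\Theta_0\subset\Theta$ infinitely often while never meeting $\Lambda$, precisely because each return is too deep for a long stable leaf to be defined there. On those visits the first coordinate is arbitrarily close to $0$, so your pointwise bound $J^u(z)\geq 2-\varepsilon$ fails; and since $E^u$ is tangent to $W^u$, which folds near $\zeta_0$, the angle with the horizontal is not controlled either. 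The difficulty is thus near the tangency $\zeta_0$, not near $P$.

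The paper sidesteps the case split entirely: for any $\mu\in\mathcal M^e(f)$ it picks a free $\mu$-generic point $x$, decomposes the forward orbit into alternating bound and free segments, and applies the expansion estimates \cite[Lemma~2.3, Proposition~2.5]{SenTak11} to obtain $\|D_xf^n|E^u_x\|\geq(2-\varepsilon)^n$ at every free time $n$. Since free times recur infinitely often and the Lyapunov limit exists at $x$, the bound follows. These are exactly the estimates underlying Lemma~\ref{markov}(a) and hence (P3)(a), so your liftable argument is a detour through the inducing scheme back to the same source. The ingredient your non-liftable case lacks is the bound-period expansion \cite[Proposition~2.5]{SenTak11}, which already compensates for deep returns to $\Theta_0$ at the level of individual orbits, with no reference to $\Lambda$ or liftability.
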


\noindent{\it Proof of Theorem B.} 
Take an open set $\gamma$ in $W^u$ intersecting $\Omega$. 
For each $z\in\gamma\cap\Omega$ there exists a curve $\gamma(z)\subset\gamma$ and an integer $n(z)>0$
such that $f^{n(z)}\gamma(z)\in\tilde\Gamma^u$.
 The set $(f^{n(z)}\gamma(z)\cap \Omega)\setminus
B$ is decomposed into a countable collection of sets which are sent by some
positive iterates to 
sets of the form $\gamma^u\cap \Lambda$, $\gamma^u\in\tilde\Gamma^u$. 
By the countable stability and the invariance of
the Hausdorff dimension under the action of bi-Lipschitz homeomorphisms,
we have $\dim^u_H(
f^{n(z)}\gamma(z)\cap \Omega)\setminus B)=d^u(\Lambda)$.
We also have $d^u(\Lambda)=\max\{d^u(X),d^u(\Lambda_B)\}$, 
and thus
\begin{align*}
\dim^u_H(
\gamma(z)\cap \Omega)&=\dim^u_H(
f^{n(z)}\gamma(z)\cap \Omega)\\
&=\max\{\dim^u_H(
f^{n(z)}\gamma(z)\cap \Omega)\setminus B),\dim^u_H(
f^{n(z)}\gamma(z)\cap \Omega)\cap B)\}\\
&=\max\{d^u(\Lambda),\dim^u_H(
f^{n(z)}\gamma(z)\cap \Omega)\cap B)\}\\
&= \max\{d^u(X),d^u(\Lambda_B),\dim^u_H(
f^{n(z)}\gamma(z)\cap \Omega)\cap B)\}.\end{align*}
In the next two paragraphs we show $d^u(X)=t^u$. This and
Lemma \ref{tulow} imply  $\dim^u_H(
\gamma(z)\cap \Omega)=t^u.$
Since $\gamma$ is decomposed into a countable number of curves like $\gamma(z)$,
the countable stability of Hausdorff dimension yields $\dim^u_H(\gamma\cap \Omega)=t^u.$ 
Hence the first statement of Theorem B holds.
\medskip

We are in position to show $d^u(X)=t^u$. Fix a basic element $J_0\in S$. Consider the covering $\mathcal U_n$ of $\gamma^u(\zeta_0)\cap J_0$ by $n$-cylinders. 
Using (P3)(a) and (P4)(a), for some $C>0$ and all $t>0$ we have
\begin{align*}
\sum_{U\in\mathcal U_n}\ell(U)^t\leq C^t
 \sum_{\stackrel{x\in\gamma^u(\zeta_0)\cap J_0}{ F^nx\in\gamma^s(x)} }\exp
 \left(-t\sum_{i=0}^{n-1}\log \|DF|E^u(F^ix)\|\right).
\end{align*}
By definition the expression of the right-hand-side has the growth rate $P_G(\overline{\varphi_t})$ as $n$ increases.
Since the pressure is non-increasing and $t^u$ is the unique solution of the equation $P(t)=0$, $P(t)<0$ holds for all $t>t^u$.  For these $t$, $c_0(t)>0$, and thus by Lemma \ref{finiteG}, $\overline{\varphi_{t}}$ has finite Gurevich pressure.
It has strongly summable variations by Proposition~\ref{summable}, and hence, there exists a unique $F$-invariant Gibbs measure $\nu_{\overline{\varphi_t}}$ for $\overline{\varphi_{t}}$.
We also have
$\nu_{\overline{\varphi_t}}(\tau)<\infty$. The Variational Principle and Abramov's and Kac's formul{\ae} \cite[Theorem 2.3]{PesSen08} yield
$P_G(\overline{\varphi_t})<0$.
Hence the
Hausdorff $t$-measure of $\gamma^u(\zeta_0)\cap J_0$ is $0$. Since
$t>t^u$ is arbitrary, 
$d^u(X)=\dim^u_H(\gamma^u(\zeta_0)\cap
J_0)=\dim^u_H(\gamma^u(\zeta_0)\cap
X)\leq t^u$.

To show the reverse inequality, 
pick an ergodic equilibrium measure for $\varphi_{t^u}$, 
which was proved to exist in \cite[Theorem]{SenTak11} and denote it by $\mu_{t^u}$. 
The dimension formula gives $h(\mu_{t^u})=\dim^u(\mu_{t^u})\lambda^u(\mu_{t^u})$.
Using the equation 
$F_{\varphi_{t^u}}(\mu_{t^u})=0$, $\varepsilon\ll1$ and Lemma \ref{tulow} 
we have $\dim^u(\mu_{t^u})=t^u>4\varepsilon$.
From this and Lemma \ref{Lyap} we have
$h(\mu_{t^u})\geq2\varepsilon$. By (P6),
$\mu_{t^u}$ is liftable.
Let $\{\nu_x\}_{x}$ denote the canonical system
of conditional measures of $\mu_{t^u}$ along unstable manifolds.
Since $\mu_{t^u}$ gives full weight to the set $Y:=\bigcup_{n\geq0} f^nX$,
$\nu_x(W^u(x)\cap Y)=1$ holds for $\mu_{t^u}$-a.e. $x$.
(P2) gives
$\gamma^u(x)\subset W^u(x)$, and thus $W^u(x)\cap Y=\bigcup_{n\geq0}f^n(\gamma^u(x)\cap X)$.
Since $\dim^u(\mu_{t^u})={\dim}(\nu_x)=t^u$ we have
$\dim^u_H(\gamma^u(x)\cap X)\geq t^u$, and therefore $d^u(X)\geq t^u$.
\medskip

To complete the proof of Theorem B it is left to show $t^u\to1$ as $b\to0$.
Define a decreasing sequence $\{E_k\}$ of compact sets inductively by
$E_0=\gamma^u(\zeta_0)$ and $E_k=\overline{E_{k-1}\setminus f^{-k+1}
\Theta_0}$ for $k\geq1$. 
Set $E_\infty:=\bigcap _{k=0}^\infty E_k$. This set is similar in spirit to $\Omega_\infty$, but its Hausdorff dimension is easier to estimate because one removes a fixed core at each step. Observe that 
$E_\infty=\overline{\gamma^u(\zeta_0)\setminus\bigcup_{i=0}^\infty f^{-i}
\Theta_0}
\subset\gamma^u(\zeta_0)\cap \Omega$, and $E_\infty$ is a Cantor set in $\gamma^u(\zeta_0)$.

Let $\mathcal E_k$ denote the collection of components of $E_k$.
For each $A\in\mathcal E_k$ choose a point $x_A\in A\cap E_\infty$
and denote by $\mu_k$ the atomic probability measure which 
is uniformly distributed on the set $\{x_A\colon A\in\mathcal E_k\}$.
Pick a limit point of $\{\mu_k\}$ and denote it by $\mu_\infty$.
Since  $E_\infty$ is closed, $\mu_\infty(E_\infty)=1$.
 By construction, for every $A\in \mathcal E_k$ and $p\geq k$ we have
$$\mu_p(A)=\frac{\#\{B\in\mathcal E_p\colon B\subset A\}}{\#\mathcal E_p}
=\frac{1}{\#\mathcal E_k}.$$
Since $\mu_\infty$ assigns no weight to the endpoints of $A$,
 $\mu_p(A)\to\mu_\infty(A)$ as $p\to\infty$. Hence
 \begin{equation}\label{tuweq1}
 \mu_\infty(A)=\lim_{p\to\infty}\mu_p(A)=\frac{1}{\#\mathcal E_k}.\end{equation}

\begin{lemma}\label{lengthf}
There exist constants $C_N>0$, $C_b$ such that 
for every $k\ge1$ and $A\in\mathcal E_k$, 
$(1/C_b)(2+\varepsilon)^{-k}\leq \ell(A)\leq (1/C_N) (2-\varepsilon)^{-k}.$
\end{lemma}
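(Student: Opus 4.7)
The plan is to apply the Mean Value Theorem to $f^{k-1}|_A$, writing $\ell(A)=\ell(f^{k-1}A)/\|D_\xi f^{k-1}|_{E^u_\xi}\|$ for some $\xi\in A$, and bounding the image length and the expansion factor separately. The key structural input is that, by the definition of $E_k$, any $z\in A$ satisfies $f^j z\notin\Theta_0$ for every $0\le j\le k-1$, so every visit of the orbit of $z$ to $\Theta$ lands in $\Theta\setminus\Theta_0$, i.e.\ in a partition element $\gamma_n\in\mathcal P(\cdot)$ with \emph{short} return time $n\le N$.

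For the expansion rate, I would record the successive visits $0=j_0<j_1<\cdots<j_s\le k-1$ of $z$ to $\Theta$, with $n_i:=j_i-j_{i-1}\le N$, plus a residual stretch of length $k-1-j_s<N$ in $R\setminus\Theta$. Iterated application of Lemma~\ref{markov} over each full return cycle, together with a sharpening on short returns coming from the closeness of $f$ to the Chebyshev map (whose first-return map to $[-1/2,1/2]$ has per-iterate expansion close to $2$, not $4$, when the orbit stays uniformly away from the tangency), should yield the per-return bounds $(2-\varepsilon)^{n_i}\le\|Df^{n_i}|_{E^u}\|\le(2+\varepsilon)^{n_i}$. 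Multiplying over the $s$ returns and absorbing bounded factors from the residual stretch and from the finitely many return times $n_i\le N_0$ not covered by Lemma~\ref{markov}, one obtains
\[
C_N(2-\varepsilon)^{k-1}\le\|D_zf^{k-1}|_{E^u_z}\|\le C_b^{-1}(2+\varepsilon)^{k-1}.
\]

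For the length of the image, $f^{k-1}A$ is a $C^2(b)$-arc in $R$ whose two endpoints lie either in $f^{k-1}\partial\gamma^u(\zeta_0)$ or on $\partial\Theta_0=\alpha_N^-\cup\alpha_N^+\cup(\text{unstable sides of }\Theta)$. The upper bound $\ell(f^{k-1}A)\le\mathrm{diam}(R)$ is immediate. For the matching lower bound, the transversality between the nearly horizontal $f^{k-1}A$ and the nearly vertical stable leaves $\alpha_N^\pm$, together with the positive horizontal width of $\Theta_0$ (which depends on $N,b$ but is independent of $k$), forces $\ell(f^{k-1}A)\ge c(N,b)>0$; in the degenerate configuration where both endpoints map to the same side $\alpha_N^\pm$ one uses that a $C^2(b)$-arc crossing such a leaf twice must poke out by at least a definite horizontal amount before returning. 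Combining with Step~1 via the Mean Value Theorem yields $(1/C_b)(2+\varepsilon)^{-k}\le\ell(A)\le(1/C_N)(2-\varepsilon)^{-k}$.

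The main obstacle is the sharpened per-return upper bound $(2+\varepsilon)^{n_i}$, since Lemma~\ref{markov} as stated only provides the looser $\sigma_2^{n_i}=(4+\varepsilon)^{n_i}$. Obtaining the sharper constant requires revisiting the proof of Lemma~\ref{markov} in the bounded-$n$ regime, exploiting that on short returns the $x$-coordinate of the orbit stays bounded away from $0$ and that the unstable single-iterate derivative $|\partial_x(1-ax^2+\sqrt{b}y)|=2a|x|+O(\sqrt b)$ aligns with the Chebyshev return-map asymptotics; the secondary difficulty is the uniform geometric lower bound on $\ell(f^{k-1}A)$ in all possible boundary configurations.
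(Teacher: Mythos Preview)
Your overall plan (Mean Value Theorem + expansion bound + image-length bound) matches the paper's, but the execution differs in two ways.

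For the expansion, the paper bypasses the return-to-$\Theta$ decomposition entirely. It simply observes that for orbits staying in $R\setminus\Theta_0$, closeness of $f$ to the Chebyshev polynomial---which is smoothly conjugate to the tent map with slope $\pm2$ away from the endpoints---gives directly
\[
C_N(2-\varepsilon)^i\le\|D_zf^i|E^u_z\|\le C_b(2+\varepsilon)^i
\]
for all $i\ge1$, with no need to invoke Lemma~\ref{markov} at all. This is exactly the ``sharpening'' you identify as the main obstacle, but obtained in one step rather than return-by-return. Your route through Lemma~\ref{markov} is risky: that lemma only applies for $n>N_0$, and here \emph{every} return time satisfies $n_i\le N$, so potentially none of your returns are covered; the ``finitely many return times $n_i\le N_0$'' can in fact be $\sim k/N_0$ many, and the resulting error is exponential in $k$, not bounded. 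The Chebyshev argument is the only way to get the sharp upper constant.

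For the image length, the paper is more explicit than your transversality sketch. It pins down the endpoints of $f^{k-1}A$: one lies in some $\tilde\alpha_j$ ($1\le j\le N-1$) or in $\alpha_N^+$, the other in $\alpha_N^-$ or past $\alpha_1^+$. The paper then iterates once more and checks that $f^kA$ is a $C^2(b)$-curve meeting both $f(\alpha_N^\pm)$ and $\alpha_1^+$, giving a definite lower bound $\ell(f^kA)>L$. This avoids the ``degenerate configuration'' case analysis you worry about.
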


\begin{proof}
Since $b\ll1$, $f$ may be viewed as a small perturbation of the Chebyshev quadratic polynomial $x\in[-1,1]\to1-2x^2$, which
is topologically conjugate to the tent map with slope $\pm2$. Since the conjugacy is smooth
except at the boundary points where the derivative blows up,
the following holds for $f$:
\begin{itemize}
\item[(a)]
there exists a constant $C_N>0$ such that
if $i\geq1$, $z\in \gamma^u(\zeta_0)\setminus \Theta_0$
and $fz,\ldots,f^{i-1}z
\notin \Theta_0$,
then $C_N(2-\varepsilon)^i\leq\|Df^i|E^u(z)\|\leq C_b(2+\varepsilon)^i$.
\end{itemize}
In addition, from (a) and \cite[Lemma 2.4]{WanYou01}, 
\begin{itemize}
\item[(b)]  if $\gamma\subset R\setminus \Theta_0$
is a $C^2(b)$-curve, then $f\gamma$ is $C^2(b)$.\end{itemize}
Let $A\in\mathcal E_{k}$. By construction, $f^iA$ does not intersect $\Theta_0$ for every $0\leq i\leq k-1$. In particular, $f^{k-1}A$ is a $C^2(b)$-curve by (b). If $f^{k-1}A$ is at the left of $\Theta_0$, then the left endpoint of $f^{k-1}A$ is in $\tilde\alpha_j$ for some $1\leq j\leq N-1$ and the right endpoint of it is in $\alpha_N^-$. If $f^{k-1}A$  is at the right of $\Theta_0$, the left endpoint of $f^{k-1}A$ is in $\alpha_{N}^+$ and $f^{k-1}A$ intersects $\alpha_1^+$. In both cases $f^kA$ is a $C^2(b)$-curve intersecting both $f(\alpha_{N}^-\cup\alpha_{N}^+)$ and $\alpha_1^+$. Therefore $\ell(f^{k}A)>L$ for some constant $L>0$ and (a) yields the desired inequalities.
\end{proof}

Let ${\rm diam}(\Theta_0)$ denote the diameter of $\Theta_0$.
From Lemma \ref{markov}(b)
there exists $C>0$ independent of $\varepsilon,N,b$ such that
for every $k\geq0$,
$$\frac{\ell(E_k\setminus E_{k+1})}{\ell(E_k)}\leq C {\rm diam}(\Theta_0).$$
This yields
$\ell(E_{k+1})=\ell(E_{k})-\ell(E_k\setminus E_{k+1})\geq(1-C {\rm diam}(\Theta_0))\ell(E_{k})
$.
Using this inductively,
\begin{equation}\label{tueq2}
\ell(E_k)\geq (1-C {\rm diam}(\Theta_0))^k.
\end{equation}
Set $\rho=(2-\varepsilon)(1-C {\rm diam}(\Theta_0))$.
By \eqref{tueq2} and Lemma \ref{lengthf},
\begin{equation}\label{tueq3}
\#\mathcal E_k\geq\frac{\ell(E_k)}{\sup\{\ell(A)\colon A\in\mathcal E_k\}}
\geq C_N\rho^k.\end{equation}

To finish, let $U$ be a small curve in $\gamma^u(\zeta_0)$. Choose a large integer $k>0$ such that
\begin{equation}\label{tueq4}
(1/C_b)\left(2+\varepsilon\right)^{-k-1}<\ell(U)\leq(1/C_b) 
\left(2+\varepsilon\right)^{-k}.\end{equation}
By Lemma \ref{lengthf}, $U$ can intersect at most two elements of $\mathcal E_{k}$.
Using \eqref{tuweq1}, \eqref{tueq3}, \eqref{tueq4} and $\ell(U)<1$,
\begin{align*}
\mu_\infty(U)&\leq (2/\#\mathcal E_{k})\leq (2/C_N)\rho^{-k}=
(2/C_N)\left(2+\varepsilon\right)^{\frac{-k\log\rho}{\log(2+\varepsilon)}}\\
&
\leq(2/C_N) C_b^{ \frac{\log\rho}{\log(2+\varepsilon)}   }
\ell(U)^{ \frac{\log\rho}{\log(2+\varepsilon)}   }.\end{align*}
The Mass Distribution Principle \cite[p.60]{Fal90} yields
$\dim^u_H(E_\infty)\geq 
\frac{\log\rho}{\log (2+\varepsilon)}$. Note that this number can be taken arbitrarily close to $1$ at the cost of 
reducing $\varepsilon,1/N$ and $b$.
Since $E_\infty\subset\gamma^u(\zeta_0)\cap \Omega$
and $t^u=\dim^u_H(\gamma^u(\zeta_0)\cap \Omega)$ from the first statement of Theorem B,
we obtain $t^u\to1$ as $b\to0$.
\qed

\begin{cor}\label{lambdatu}
$\lambda^u(\mu_{t^u})\to\log2$ as $b\to0$.
\end{cor}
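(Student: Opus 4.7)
The plan is to exploit three ingredients already in place: the equilibrium relation $F_{\varphi_{t^u}}(\mu_{t^u})=P(t^u)=0$; the asymptotic $t^u\to 1$ from Theorem B; and the uniform lower bound on the unstable Lyapunov exponent coming from Lemma \ref{Lyap}. The equilibrium relation, together with the ergodicity of $\mu_{t^u}$, immediately gives the basic identity
$$h(\mu_{t^u})=t^u\,\lambda^u(\mu_{t^u}).$$

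For the upper bound on $\lambda^u(\mu_{t^u})$ I would use that the topological entropy of $f|\Omega$ is bounded by $\log 2$---this is precisely the ingredient already invoked in the proof of Lemma \ref{tulow} to estimate $P(-1)<\log 2+\log 5$ via $\|Df\|<5$ on $R$. It follows that $h(\mu_{t^u})\leq\log 2$, whence
$$\lambda^u(\mu_{t^u})=h(\mu_{t^u})/t^u\leq\log 2/t^u,$$
and Theorem B then gives $\limsup_{b\to 0}\lambda^u(\mu_{t^u})\leq\log 2$. For the matching lower bound, Lemma \ref{Lyap} asserts $\lambda^u(\mu_{t^u})\geq\log(2-\varepsilon)$, so $\liminf_{b\to 0}\lambda^u(\mu_{t^u})\geq\log 2$ once we allow $\varepsilon\to 0$. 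Combining the two bounds yields the claim.

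There is no genuine obstacle: the corollary is a squeeze argument made effective by the convergence $t^u\to 1$. The only point requiring a bit of bookkeeping is that the statement's limit is taken in $b$ alone, whereas the lower bound involves the auxiliary smallness parameter $\varepsilon$ of Proposition \ref{lat}. These must be coupled by choosing $\varepsilon=\varepsilon(b)\to 0$ as $b\to 0$, which is legitimate since Proposition \ref{lat} (and hence all preceding estimates) is available precisely under $b<b_0(\varepsilon)$; given any sequence $b_n\to 0$ one picks $\varepsilon_n\to 0$ with $b_n<b_0(\varepsilon_n)$ and applies the squeeze along this sequence.
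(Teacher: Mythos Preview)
Your proposal is correct and follows essentially the same argument as the paper: the upper bound $\lambda^u(\mu_{t^u})\leq(\log 2)/t^u$ from the equilibrium relation and the variational principle (topological entropy $=\log 2$), the lower bound $\lambda^u(\mu_{t^u})\geq\log(2-\varepsilon)$ from Lemma \ref{Lyap}, and the squeeze via $t^u\to1$ and $\varepsilon\to0$. Your explicit discussion of coupling $\varepsilon$ to $b$ is more careful than the paper's one-line remark that ``$\varepsilon>0$ can be made arbitrarily small by choosing small $b$,'' but the content is the same.
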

\begin{proof}
The topological entropy of $f$ is $\log2$. 
The relation $F_{\varphi_{t^u}}(\mu_{t^u})=0$ and the variational principle give
$\lambda^u(\mu_{t^u})\leq\log2/t^u.$
On the other hand, Lemma \ref{Lyap} gives $\lambda^u(\mu_{t^u})\geq
\log(2-\varepsilon)$.
Since $t^u\to1$ as $b\to0$ as in Theorem B and $\varepsilon>0$ can be made
arbitrarily small by choosing small $b$, we get the claim.
\end{proof}

\subsection{Positive recurrence}\label{posrec}
We now define $-1<t_-<0<t_+$ by
\begin{equation}\label{t+-}t_+=\frac{t^u\lambda^u(\mu_{t^u})}{\lambda^u(\mu_{t^u})-\log(2-\varepsilon)+\sqrt{\varepsilon}}
\quad\text{and}\quad t_-=\frac{t^u\lambda^u(\mu_{t^u})}{\lambda^u(\mu_{t^u})-\log(4+\varepsilon)-\sqrt{\varepsilon}}.\end{equation}
Corollary \ref{lambdatu}  implies that these definitions make sense. It also implies that
 one can make $t_+$ and $t_-$ arbitrarily large and close to $-1$ respectively,
by choosing sufficiently small $\varepsilon$.

\begin{lemma}\label{pr}
If $t\in(t_-,t_+)$, then $\overline{\varphi_t}$ is positive recurrent.
\end{lemma}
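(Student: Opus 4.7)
The plan is to show that the strict inequality
$$c_0(t) + P_L(\varphi_t) > 0$$
holds for every $t\in(t_-,t_+)$. This is enough: setting $c = \eta - P_L(\varphi_t)$, Lemma \ref{finiteG} gives $P_G(\overline{\varphi_t - (P_L(\varphi_t)-\eta)}) < \infty$ exactly when $c < c_0(t)$, i.e., when $\eta < c_0(t) + P_L(\varphi_t)$; any $\eta_0 \in (0, c_0(t) + P_L(\varphi_t))$ then verifies condition \eqref{PR}.

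For a lower bound on $P_L(\varphi_t)$ I will use the measure $\mu_{t^u}$ constructed in Section \ref{uhd}, which was shown there to be liftable via (P6) (the required entropy bound $h(\mu_{t^u})\geq 2\varepsilon$ follows from the identity $h(\mu_{t^u}) = t^u\lambda^u(\mu_{t^u})$ together with Lemma \ref{tulow} and Lemma \ref{Lyap}, provided $\varepsilon$ is small). Since $\mu_{t^u}\in\mathcal M_L(f)$,
$$P_L(\varphi_t)\;\geq\; h(\mu_{t^u}) - t\lambda^u(\mu_{t^u}) \;=\; (t^u - t)\lambda^u(\mu_{t^u}).$$
For a lower bound on $c_0(t)$, the cardinality bound (P5) gives $\varlimsup (1/n)\log S(n)\leq\varepsilon$, so $c_0(t)\geq t\log(2-\varepsilon)-\varepsilon$ when $t\geq 0$ and $c_0(t)\geq t\log(4+\varepsilon)-\varepsilon$ when $t\leq 0$.

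Adding these two lower bounds in the case $t\geq 0$ yields
$$c_0(t)+P_L(\varphi_t)\;\geq\; t^u\lambda^u(\mu_{t^u}) - t\bigl(\lambda^u(\mu_{t^u})-\log(2-\varepsilon)\bigr) - \varepsilon,$$
which is positive whenever $t < (t^u\lambda^u(\mu_{t^u})-\varepsilon)/(\lambda^u(\mu_{t^u})-\log(2-\varepsilon))$. The final step is to check that this critical value is strictly larger than $t_+$: clearing denominators, it is equivalent to $t^u\lambda^u(\mu_{t^u}) > \sqrt{\varepsilon}\bigl(\lambda^u(\mu_{t^u})-\log(2-\varepsilon)\bigr) + \varepsilon$, which holds for $\varepsilon$ small because $t^u\lambda^u(\mu_{t^u})\to\log 2$ as $b\to 0$ by Corollary \ref{lambdatu}. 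The case $t\leq 0$ is symmetric: the denominator $\lambda^u(\mu_{t^u})-\log(4+\varepsilon)$ is now negative, the analogous critical value is negative, and the $\sqrt{\varepsilon}$ in the definition of $t_-$ is precisely what places $t_-$ strictly above (less negative than) that critical value under the same small-$\varepsilon$ hypothesis. The only mildly delicate step is this linear-fractional comparison; everything else is a direct combination of ingredients already in hand, namely (P5), (P6), and Lemma \ref{finiteG}.
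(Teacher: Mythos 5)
Your proof is correct, but it is organized differently from the paper's. The paper does not pass through $c_0(t)$ at all: it estimates $T_{t,-(P_L(\varphi_t)-\eta)}$ directly and splits $(t_-,t_+)$ into three cases, $t^u\le t<t_+$, $0\le t<t^u$ and $t_-<t\le 0$, using in the middle case Jensen's inequality $\sum_{\tau(J)=n}\ell(J)^t\leq S(n)^{1-t}\bigl(\sum_{\tau(J)=n}\ell(J)\bigr)^t$. The reason for that extra case in the paper is that its Case I argument deduces negativity of the exponent from the definition of $t_+$, which only leaves a margin of $-t\sqrt{\varepsilon}+\varepsilon$ and hence needs $t$ bounded away from $0$ (there $t\ge t^u$); your argument sidesteps this because you do not argue "$t<t_+$ hence the exponent is negative" but instead compare $t_+$ with the root $(t^u\lambda^u(\mu_{t^u})-\varepsilon)/(\lambda^u(\mu_{t^u})-\log(2-\varepsilon))$ of the linear lower bound for $c_0(t)+P_L(\varphi_t)$, reducing the whole range $t\ge 0$ (and symmetrically $t\le 0$) to the single inequality $t^u\lambda^u(\mu_{t^u})\ge\sqrt{\varepsilon}\,\bigl(\lambda^u(\mu_{t^u})-\log(2-\varepsilon)\bigr)+\varepsilon$, which indeed holds once $\varepsilon$ is fixed small and $b$ is then taken small (by Theorem B, Corollary \ref{lambdatu}, or even just Lemmas \ref{tulow} and \ref{Lyap}, which give a lower bound on $t^u\lambda^u(\mu_{t^u})$ independent of $\varepsilon,b$, while $\lambda^u(\mu_{t^u})-\log(2-\varepsilon)$ is bounded). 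The ingredients are the same as the paper's — the lower bound $P_L(\varphi_t)\ge(t^u-t)\lambda^u(\mu_{t^u})$ from liftability of $\mu_{t^u}$ (already established in the proof of Theorem B, which you could simply have cited), (P5), and Lemma \ref{finiteG} — but your route eliminates the Jensen case and makes the role of the $\sqrt{\varepsilon}$ in \eqref{t+-} transparent; minor caveats are that the division defining your critical value needs $\lambda^u(\mu_{t^u})-\log(2-\varepsilon)>0$ (if it vanishes, positivity is immediate by Lemma \ref{Lyap}), and for $t\le 0$ one should note, as the paper does implicitly when defining $t_-$, that $\lambda^u(\mu_{t^u})-\log(4+\varepsilon)<0$ for small $b$.
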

\begin{proof}
Let $\mathcal M_L(f)$ denote the set of liftable measures to the inducing scheme $(S,\tau)$
in Proposition \ref{lat}. Let 
 $$P_L(\varphi_t):=\sup\{F_{\varphi_t}(\mu)\colon\mu\in\mathcal
M_L(f)\}.$$
In view of Lemma \ref{finiteG} it suffices to show that one can choose $\eta_0>0$ so that 
$T_{t,-(P_L(\varphi_t)-\eta)}$ is finite for all $0\leq\eta\leq\eta_0$.
To show this we  first estimate $P_L(\varphi_t)$ from below.
In the proof of Theorem B we have shown that 
$\mu_{t^u}\in\mathcal M_L(f)$. Hence
\begin{equation}\label{pl}
P_L(\varphi_t)\geq
F_{\varphi_{t}}(\mu_{t^u})=h(\mu_{t^u})-t\lambda^u(\mu_{t^u})=(t^u-t)\lambda^u(\mu_{t^u}).\end{equation}

To show the finiteness of $T_{t,-(P_L(\varphi_t)-\eta)}$
we consider the following three cases.
\medskip

\noindent{\bf Case I:} $0<t^u\leq t< t_+.$
Using \eqref{pl} and the fact that $\sigma_1=2-\varepsilon$ in \eqref{sigma12} we have
$$-P_L(\varphi_t)-t\log\sigma_1+\frac{1}{n}\log
S(n)\leq(t-t^u)\lambda^u(\mu_{t^u})-t\log(2-\varepsilon)+\frac{1}{n}\log S(n).$$By the definition of $t_+$ in \eqref{t+-} and 
the bound on $S(n)$ from (P5),
the right-hand-side is strictly negative
for all large $n$. Therefore for sufficiently small $\eta\geq0$, 
\[
T_{t,-(P_L(\varphi_t)-\eta)} \leq C 
\sum_{n>0}\exp\left({n\left(-P_L(\varphi_t)+\eta-t\log\sigma_1+\frac{1}{n}\log
S(n)\right)}\right)<\infty.\]

\noindent{\bf Case II:} $0\leq t< t^u<1.$
Jensen's inequality applied to the convex function $x\to x^t$ yields
$$\sum_{\tau(J)=n}\ell(J)^t\leq S(n)^{1-t}\left(\sum_{\tau(J)=n}
\ell(J)\right)^t.$$
Using this we have
\begin{align*}
e^{-(P_L(\varphi_t)-\eta)n}\sum_{\tau(J)=n}\ell(J)^t
&\leq
\exp\left(
\left(
\eta+(t-t^u)\lambda^u(\mu_{t^u})
+(1-t)\frac1n\log S(n)-\frac{t}{2}\log\sigma_1
\right)
n\right)\\
&\leq\exp\left(\left(\eta-t^u\lambda^u(\mu_{t^u})+t\left(\lambda^u(\mu_{t^u})-\frac{1}{2}\log\sigma_1\right)+(1-t)\varepsilon\right)n\right).\end{align*}
Since $t^u\to1$ and $\lambda^u(\mu_{t^u})\to\log2$ as $b\to0$, the exponent
is strictly negative for sufficiently small $\eta\geq0$. Therefore
$T_{t,-(P_L(\varphi_t)-\eta)}<\infty$ holds.
\medskip

\noindent{\bf Case III:} $t_-< t\leq0.$
Using (\ref{PL}) and the fact that $\sigma_2=4+\varepsilon$ in \eqref{sigma12} we have 
$$-P_L(\varphi_t)-t\log \sigma_2+\frac{1}{n}\log
S(n)\leq(t-t^u)\lambda^u(\mu_{t^u})-t\log(4+\varepsilon)+\frac{1}{n}\log
S(n).$$
By the definition of $t_-$ in \eqref{t+-} and (P5), 
the right-hand-side is strictly negative for all large $n$.
Therefore
for sufficiently small $\eta\geq0$,
\[
T_{t,-(P_L(\varphi_t)-\eta)} \le C
\sum_{n>0}\exp\left(n\left(-(P_L(\varphi_t)-\eta)-t\log \sigma_2+\frac{1}{n}\log
S(n)\right)\right)<\infty.
\]
This completes the proof of Lemma \ref{pr}.
\end{proof}

\begin{cor}\label{uniqueL}
For any $t\in(t_-,t_+)$ there exists a unique equilibrium measure for $\varphi_t$ among all liftable measures.
\end{cor}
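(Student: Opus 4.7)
The plan is to apply Proposition~\ref{mainthemorem} directly to the potential $\varphi_t$ for $t\in(t_-,t_+)$ and the inducing scheme $(S,\tau)$ constructed in Proposition~\ref{lat}. Conditions (A1)--(A3) have already been verified in Section~3, so the work amounts to checking the hypotheses on the potential and then extracting the integrability conditions needed to lift the symbolic equilibrium measure back to the original system.

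First I would dispatch the hypotheses on $\varphi_t$. Boundedness $\sup\overline{\varphi_t-P_L(\varphi_t)}<\infty$ follows from (P3)(a) of Proposition~\ref{lat}: for $t\geq0$, $\overline{\varphi_t}(z)\leq -t\tau(z)\log\sigma_1$, and symmetrically $\overline{\varphi_t}(z)\leq -t\tau(z)\log\sigma_2$ for $t<0$; combined with the finiteness of $P_L(\varphi_t)$ (guaranteed by the existence of periodic points of $F$, as noted right after the definition of $P_L$ in Sect.~\ref{candy}, together with Lemmas~\ref{summable} and~\ref{finiteG}), this bound is uniform. Strongly summable variations were established in Lemma~\ref{summable}. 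Finite Gurevich pressure and positive recurrence for $t\in(t_-,t_+)$ are precisely the content of Lemma~\ref{finiteG} and Lemma~\ref{pr}. Thus Proposition~\ref{mainthemorem} yields an $F$-invariant Gibbs measure $\nu=\nu_{\overline{\varphi_t-P_L(\varphi_t)}}$ for the shifted induced potential.

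To upgrade $\nu$ to the unique equilibrium for $\overline{\varphi_t-P_L(\varphi_t)}$ among measures in $\mathcal M_{\overline{\varphi_t-P_L(\varphi_t)}}(F)$, and to guarantee that its projection $\mathcal L(\nu)$ is a candidate equilibrium measure for $\varphi_t$, I still need $h_\nu(F)<\infty$ and $\nu(\tau)<\infty$. For the latter, the Gibbs property together with $P_G(\overline{\varphi_t-P_L(\varphi_t)})=0$ (which holds on the positive recurrent side by the discussion following \eqref{PR}) gives a uniform constant $C>0$ such that for each basic element $J\in S$,
\[
\nu(J)\;\leq\;C\sup_{x\in J}\exp\bigl(\overline{\varphi_t}(x)-P_L(\varphi_t)\tau(J)\bigr).
\]
The positive recurrence condition gives a slack $\eta_0>0$ for which $T_{t,\,-(P_L(\varphi_t)-\eta_0)}<\infty$, so
\[
\nu(\tau)=\sum_{J\in S}\tau(J)\nu(J)\leq C\sum_{n\geq1}n\,e^{-\eta_0 n}\,T_{t,\,-(P_L(\varphi_t)-\eta_0)}<\infty,
\]
after absorbing the factor $n$ into a slight reduction of $\eta_0$. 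Finite entropy $h_\nu(F)<\infty$ is obtained from the Abramov--Kac-type relation combined with the same bounds, since $h_\nu(F)=\nu(\tau)\cdot h_{\mathcal L(\nu)}(f)$ and the topological entropy of $f$ is $\log 2$. Then Proposition~\ref{mainthemorem} tells us that $\mathcal L(\nu)$ is the unique element of $\mathcal M_L(f)$ that maximizes $F_{\varphi_t}$ over $\mathcal M_L(f)$, which is exactly the statement of the corollary.

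The main obstacle is the geometric summability needed for $\nu(\tau)<\infty$: without the extra $\eta_0$ room afforded by positive recurrence, one would only get $P_G(\overline{\varphi_t-P_L(\varphi_t)})=0$ and the series $\sum n\,\nu(J_n)$ could diverge. The verification that $T_{t,-(P_L(\varphi_t)-\eta)}<\infty$ for a small $\eta>0$, carried out across the three cases in Lemma~\ref{pr} using the subexponential growth of $S(n)$ in (P5) together with the definitions of $t_\pm$ in \eqref{t+-}, is precisely what delivers this exponential slack and closes the argument.
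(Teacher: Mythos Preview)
Your verification of strongly summable variations and positive recurrence is fine, as is your route to $\nu(\tau)<\infty$ and $h_\nu(F)<\infty$ once a Gibbs measure is in hand. The gap is in the ``finite Gurevich pressure'' hypothesis of Proposition~\ref{mainthemorem}. You assert that Lemma~\ref{finiteG} supplies $P_G(\overline{\varphi_t})<\infty$, but that lemma only yields $P_G(\overline{\varphi_t+c})<\infty$ for $c<c_0(t)$, and $c_0(t)$ need not be positive on all of $(t_-,t_+)$: at $t=0$ one has $c_0(0)=-\varlimsup_n\frac{1}{n}\log S(n)\leq0$, and in fact $P_G(\overline{\varphi_0})=P_G(0)=+\infty$ because the full shift on a countably infinite alphabet has infinitely many $n$-periodic points in any fixed one-cylinder. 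So Proposition~\ref{mainthemorem} cannot be invoked for $\varphi_t$ directly. The same difficulty contaminates your justification of $|P_L(\varphi_t)|<\infty$, which you derive from the paragraph in Sect.~\ref{candy} that itself presupposes finite Gurevich pressure.

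The paper's remedy is to pass to the cohomologous potential $\varphi_t+c$ with $-c\gg1$ chosen so that $c<c_0(t)$. Then Lemma~\ref{finiteG} does give finite Gurevich pressure, while $P_L(\varphi_t+c)=P_L(\varphi_t)+c$ implies $\overline{\varphi_t+c-P_L(\varphi_t+c)}=\overline{\varphi_t-P_L(\varphi_t)}$, so positive recurrence carries over from Lemma~\ref{pr}. Proposition~\ref{mainthemorem} then applies legitimately to $\varphi_t+c$, and since $\varphi_t+c$ and $\varphi_t$ share the same (candidate) equilibrium measures, the conclusion follows. The paper also exploits the extra factor $e^{cn}$ with $-c\gg1$ to obtain the exponential tail \eqref{exptail} directly, rather than through the $\eta_0$-slack from positive recurrence as you do; either route handles the integrability of $\tau$, but the shift by $c$ is what makes the hypotheses of the black-box proposition actually hold.
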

\begin{proof}
Choose $c<c_0(t)$ so that $-c\gg1$.
Then $\varphi_{t}+c$ has finite Gurevich pressure,
and is strongly summable by Proposition \ref{summable}. Observe
that $P_L(\varphi_{t}+c)=P_L(\varphi_{t})+c$ and so
$\overline{\varphi_{t}+c-P_L(\varphi_{t}+c)}=\overline{\varphi_{t}-P_L(\varphi_{t})}$.
Since $\varphi_{t}$ is positive recurrent by Lemma \ref{pr},
so is $\varphi_{t}+c$. By Proposition
\ref{mainthemorem}, there exists a Gibbs measure
$\nu_{\overline{\varphi_t+c}}$. By the Gibbs property, for any $J\in S$ and for all $x\in
J$,
\[\nu_{\overline{\varphi_t+c}}(J)\leq C\exp
\left(-P_G(\overline{\varphi_t+c})+\overline{\varphi_t+c}(x)\right)
\leq
Ce^{-P_G(\overline{\varphi_t+c})}e^{c\tau(J)}\max\left(\sigma_1^{-t\tau(J)},\sigma_2^{-t\tau(J)}\right),\]
and therefore
\begin{equation}\label{exptail}\sum_{\stackrel{J\in S}{\tau(J)=n}} \tau(J)\nu_{\overline{\varphi+c}}(J)\leq
C n S(n)e^{-P_G(\overline{\varphi_t+c})}e^{cn}\max\left(\sigma_1^{-tn},\sigma_2^{-tn}\right).
\end{equation}
The right-hand-side has a negative growth rate as $n$ increases.
Hence $\nu_{\overline{\varphi_t+c}}(\tau)<\infty$ holds. By
Proposition \ref{mainthemorem}, there exists a unique equilibrium
measure for $\varphi_{t}+c$ among all liftable measures.
Since $\varphi_t+c$ is cohomologous to $\varphi_t$, they yield the same equilibrium measures. 
\end{proof}

\subsection{Uniqueness of equilibrium measures for $\varphi_t$}\label{range}
 We finish the proof of Theorem A. 
 We start with preliminary estimates of $t_\pm$.
Define
$$\lambda_M^u:=\sup\{\lambda^u(\mu)\colon\mu\in\mathcal
M^e(f)\}\ \ \text{and}\ \
\lambda_m^u:=\inf\{\lambda^u(\mu)\colon\mu\in\mathcal
M^e(f)\}.$$
\begin{lemma}\label{t1'}
We have
\begin{equation*}
t_+<   \frac{t^u\lambda^u(\mu_{t^u})-2\varepsilon}{\lambda^u(\mu_{t^u})-\lambda^u_m}
\ \ \text{and}\ \ t_->\frac{
t^u\lambda^u(\mu_{t^u})-2\varepsilon}
{\lambda^u(\mu_{t^u})-\lambda^u_M}.\end{equation*}
\end{lemma}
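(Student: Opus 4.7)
My proof plan is to unravel the definitions in (\ref{t+-}), cross-multiply, and then use Lemma~\ref{Lyap} together with its pointwise upper counterpart $\lambda^u_M \le \log(4+\varepsilon)$. Write $A := t^u\lambda^u(\mu_{t^u})$ and $L := \lambda^u(\mu_{t^u})$. By the dimension-formula computation carried out in the proof of Theorem~B we have $A = h(\mu_{t^u}) \ge 2\varepsilon$, and Corollary~\ref{lambdatu} together with $t^u \to 1$ yields $A, L \to \log 2$ as $b\to 0$.

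For the first inequality, the denominators $L - \log(2-\varepsilon) + \sqrt{\varepsilon}$ and $L - \lambda^u_m$ are both positive (the latter by Lemma~\ref{Lyap}), so cross-multiplying will reduce the claim to
$$A(L - \lambda^u_m) < (A - 2\varepsilon)(L - \log(2-\varepsilon) + \sqrt{\varepsilon}).$$
Invoking Lemma~\ref{Lyap} to bound the left side by $A(L - \log(2-\varepsilon))$, the problem collapses to
$(A-2\varepsilon)\sqrt{\varepsilon} > 2\varepsilon(L - \log(2-\varepsilon))$. Since $A - 2\varepsilon \to \log 2$ while $L - \log(2-\varepsilon) \to 0$ as $\varepsilon,b\to 0$, and $\sqrt{\varepsilon}$ dominates $\varepsilon$, this will hold for small enough $\varepsilon$.

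For the second inequality I would follow the same scheme, after checking that both $L - \log(4+\varepsilon) - \sqrt{\varepsilon}$ and $L - \lambda^u_M$ are negative. The latter requires the upper bound $\lambda^u_M \le \log(4+\varepsilon)$, which I expect to be available from a pointwise estimate $J^u \le 4+\varepsilon$ on $\Omega$: the unstable direction has slope $\le\sqrt{b}$, so $J^u(z)$ agrees with $|\partial_x(1-a^* x^2)| = 2 a^* |x|$ up to an $O(\sqrt{b})$ correction, and $|x|\le 1+O(\varepsilon)$ on $\Omega$, giving $J^u \le 4+\varepsilon$ for sufficiently small $b$, and hence $\lambda^u_M \le \log(4+\varepsilon)$ by integration. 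Cross-multiplying (with signs reversed by the two negative denominators) then reduces the claim to $(A-2\varepsilon)\sqrt{\varepsilon} > 2\varepsilon(\log(4+\varepsilon) - L)$, which again holds for small $\varepsilon$ because $\log(4+\varepsilon) - L$ stays bounded (close to $\log 2$).

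The hard part is the input $\lambda^u_M \le \log(4+\varepsilon)$, the natural complement to Lemma~\ref{Lyap}. If it is not already recorded alongside Lemma~\ref{Lyap} in Appendix~A3, the pointwise estimate sketched above -- combined with the previously established facts that $\Omega$ lies in a small neighborhood of $[-1,1]\times\{0\}$ and that $E^u$ is nearly horizontal wherever it is defined -- will suffice. Once this bound is in hand the rest is a purely algebraic exercise: both inequalities share the same structure, and the $\sqrt{\varepsilon}$ slack built into the denominators of $t_\pm$ in (\ref{t+-}) comfortably absorbs the $2\varepsilon$ perturbation appearing in the numerators of the right-hand sides.
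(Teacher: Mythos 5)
Your argument is correct and follows essentially the same route as the paper: the paper computes the difference of the two fractions as a single quotient and observes, via Lemma~\ref{Lyap} and the asymptotics $t^u\to1$, $\lambda^u(\mu_{t^u})\to\log2$ (Corollary~\ref{lambdatu}), that the numerator is at least $\tfrac12\sqrt{\varepsilon}$ minus an $O(\varepsilon)$ term; your cross-multiplication is the same computation, and the $\sqrt{\varepsilon}$ slack in \eqref{t+-} is used identically. The one place where you go beyond what is written is the second inequality: the paper only says the proof is ``analogous'', which tacitly uses the upper bound $\lambda^u_M\le\log(4+\varepsilon)=\log\sigma_2$, and you correctly identify this as the needed complement to Lemma~\ref{Lyap} (a cruder bound such as $\|Df\|<5$, which is all the paper records in Lemma~\ref{tulow}, would not suffice here). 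Your justification of it, however, should not be routed through near-horizontality of $E^u$ on all of $\Omega$: that property is not established in the paper and in fact fails, e.g., at forward images of the point of tangency, where $E^u$ is nearly vertical. It is also unnecessary: $J^u(z)\le\|D_zf\|\le 2a^*|x|+O(\sqrt{b})\le 4+\varepsilon$ on $\Omega$ for small $b$, irrespective of the direction of $E^u_z$, and integrating gives $\lambda^u_M\le\log(4+\varepsilon)$, exactly as you intend; with that repair your proof coincides with the paper's.
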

\begin{proof}
A direct computation gives
$$\frac{t^u\lambda^u(\mu_{t^u})-2\varepsilon}{\lambda^u(\mu_{t^u})-
\lambda^u_m}-t_+=\frac{t^u\lambda^u(\mu_{t^u})(\lambda^u_m-\log(2-\varepsilon)+\sqrt{\varepsilon})-2\varepsilon(\lambda^u(\mu_{t^u})-\log(2-\varepsilon)+\sqrt{\varepsilon})}{(\lambda^u(\mu_{t^u})-\lambda^u_m)(\lambda^u(\mu_{t^u})-\log(2-\varepsilon)+\sqrt{\varepsilon})}.$$
The denominator of the fraction of the right-hand-side is positive.
Since $t^u\to1$ and $\lambda^u(\mu_{t^u})\to\log2$ as $b\to0$,
the first term of the numerator is $\geq(1/2)\sqrt{\varepsilon}$.
Hence the numerator is positive, and the first inequality holds.
A proof of the second one is analogous.
\end{proof}

 \noindent{\it Proof of Theorem A.} 
 Given a bounded interval $I\subset(-1,\infty)$,
 choose $\varepsilon$ and $b$ so that $I\subset (t_-,t_+)$.
Let $t\in I$.
In view of Corollary \ref{uniqueL}
we need to consider measures which are not liftable to the inducing scheme $(S,\tau)$.
Since
$$\sup\{F_{\varphi_t}(\mu)\colon\mu\in\mathcal M(f)\setminus\mathcal M_L(f)\}=
\sup\{F_{\varphi_t}(\mu)\colon\mu\in\mathcal M^e(f)\setminus\mathcal M_L(f)\},$$
we may restrict ourselves to ergodic measures.
It suffices to show
\begin{equation}\label{contra}
\sup\{F_{\varphi_t}(\mu)\colon\mu\in\mathcal M^e(f)\setminus\mathcal M_L(f)\}
<P_L(\varphi_t).\end{equation}

We argue by contradiction assuming (\ref{contra}) is false.
Then,
for any $\delta>0$ there
exists $\mu\in\mathcal M^e(f)\setminus\mathcal M_L(f)$ such that
$h(\mu)-t\lambda^u(\mu)\geq P_L(\varphi_t)-\delta.$
 Then
  $$h(\mu)\geq t\left(\lambda^u(\mu)-\lambda^u(\mu_{t^u})\right)
+t^u\lambda^u(\mu_{t^u})-\delta.$$
For the rest of the proof we deal with two cases separately.
\medskip

\noindent{\bf Case I:} $0\leq t<t_+$. We have
$h(\mu)\geq t\left(\lambda^u_m-\lambda^u(\mu_{t^u})\right)
+t^u\lambda^u(\mu_{t^u})-\delta.$
Since $\delta>0$ is arbitrary we get
\begin{equation}\label{t1''}
h(\mu)\geq t\left(\lambda^u_m-\lambda^u(\mu_{t^u})\right)
+t^u\lambda^u(\mu_{t^u}).\end{equation}
\eqref{t1''} and the first inequality in Lemma \ref{t1'} yield $h(\mu)>2\varepsilon$.
(P6) gives $\mu\in\mathcal M_L(f)$, which is
a contradiction.
\medskip

\noindent{\bf Case II:} $t_-<t<0$. Follows similarly from the second inequality in Lemma~\ref{t1'}.\ \qed


\subsection{Measure of maximal unstable dimension}\label{c}
We now prove the existence and uniqueness of a measure of maximal unstable dimension.\\

\noindent{\it Proof of Theorem C.} Let $\mu\in\mathcal M^e(f)$. If $\mu\in\mathcal M_L(f)$, 
then $\mu(\bigcup_{n\geq0} f^nX)=1$. Arguing similarly to the last paragraph in the proof of Theorem B we obtain $\dim^u(\mu)\leq d(X)=t^u$.
If $\mu\notin\mathcal M_L(f)$, then (P6) gives $h(\mu)<2\varepsilon$, and since $t^u\to1$ as $b\to 0$, we have
$\dim^u(\mu)<t^u$ for $b, \varepsilon$ small enough. 
Since $\dim^u(\mu_{t^u})=t^u$, 
$\mu_{t^u}$ is a measure of maximal unstable dimension.

As for the uniqueness, let $\mu$ be a measure of maximal unstable dimension.
Then $\dim^u(\mu)=t^u$,
and so $h(\mu)-t^u\lambda^u(\mu)=0$, namely $\mu$ is an equilibrium
measure for $\varphi_{t^u}$.
The uniqueness in Theorem A yields $\mu=\mu_{t^u}$.\qed
\medskip

\subsection{Statistical properties of equilibrium measures}\label{d}
We now prove statistical properties of $\mu_t$.\\

\noindent{\it Proof of Theorem D.}
Once the existence of the equilibrium measure for $\varphi_t$ is established, the statistical properties in Theorem D can be deduced from the abstract results of Young \cite[Theorems 2 and 3]{You98} with the exponential tail estimate in \eqref{exptail}. \qed

\section*{Appendix: computational proofs}
We refer the reader to \cite[Sect.2]{SenTak11} for relevant definitions and results used in this appendix.
 
 \subsection*{A1. Proof of Lemma \ref{markov}}
 Let $\zeta$ denote the critical point \cite[Sect.2.2]{SenTak11} on $\gamma$, and
$p(z)$ the corresponding {\it bound period} for $z\in\gamma_n$ \cite[Sect.2.3]
{SenTak11}. By \cite[Proposition 2.5]{SenTak11}, $\|D_{z}f^{p(z)}|E^u_z\|\geq
(4-2\varepsilon)^{\frac{p(z)}{2}}$ and ${\rm
slope}(D_zf^{p(z)}|E_z^u)\leq\sqrt{b}$. Since $p(z)<n$, 
the derivative estimate in \cite[Lemma 2.3]{SenTak11} gives
$\| D_{f^{p(z)}z}f^{n-p(z)}|E_{f^{p(z)}z}^u\|\geq\sigma_1^{n-p(z)}$.
Hence $\|D_zf^n|E^u_z\|\geq  (4-2\varepsilon)^{\frac{p(z)}{2}}\sigma_1^{n-p(z)}\geq
\sigma_1^n$ and (a) holds.
\medskip


For $z\in\gamma_n$, Let $e^u(z)$ denote the unit vector which spans $E^u_z$
and has a positive first component.
Consider the {\it stable foliation} $\mathcal F^s$ \cite[Sect.2.2]{SenTak11}, and
let $\mathcal F^s(fz)$ denote the leaf through $fz$.
Let $e^s(fz)$ denote the unit vector which spans
$T_{fz}\mathcal F^s(fz)$ and has a positive second component.
 Split
$ D_zfe^u(z)
= A(z)\left(
\begin{smallmatrix}1\\0
\end{smallmatrix}\right)+B(z)e^s(fz).$ 
\cite[Lemma 2.2]{Tak12b} gives
\begin{equation}\label{quadratic}
|A(z)|\approx|\zeta-z|\ \ \text{and} \ \ |B(z)|\leq C\sqrt{b}.
\end{equation}
Let $p=\max\{p(z)\colon z\in\gamma_n\}$. Split $\Vert D_xf^{p}e^u(x)-D_yf^{p}e^u(y)\Vert\leq I_1+I_2+I_3+I_4$,
where
\begin{align*}
I_1&=|A(x)-A(y)|\cdot \Vert 
D_{fx}f^{p-1}\left(
\begin{smallmatrix}1\\0
\end{smallmatrix}\right)\Vert,\\
I_2&=|B(x)-B(y)|\cdot \Vert D_{fx}f^{p-1}e^s(fx)\Vert
,\\
I_3&=|B(y)|\cdot \Vert D_{fx}f^{p-1}e^s(fx)-
D_{fy}f^{p-1}e^s(fy)\Vert,\\
 I_4&=|A(y)|\cdot \Vert D_{fx}f^{p-1}\left(
\begin{smallmatrix}1\\0
\end{smallmatrix}\right) -D_{fy}f^{p-1}\left(
\begin{smallmatrix}1\\0
\end{smallmatrix}\right)\Vert.
\end{align*}

\noindent{\it Estimates of $I_1, I_2$.} Let
$e^s(z)=\left(\begin{smallmatrix} e_1(z)\\e_2(z)\end{smallmatrix}\right)$, and 

$$S(z)=\begin{pmatrix}1 & e_1(z)\\0 & e_2(z)\end{pmatrix}^{-1}=\begin{pmatrix}1+\epsilon_1 &\epsilon_2\\
\epsilon_3 &1+\epsilon_4\end{pmatrix}\ \ \text{and} \ \
 D_zf=\begin{pmatrix}-2a^*z_1+\alpha_1 & \alpha_2\\
\alpha_3&\alpha_4\end{pmatrix},$$ 
where $z_1$ denotes the first coordinate of $z$.
Let $R(z)$ denote the rotation matrix by 
$\theta(z):=\angle(e^u(z),\left(\begin{smallmatrix} 1\\0\end{smallmatrix}\right))$. 
Then $A(z)$, $B(z)$ are equal to
the $(1,1)$, $(2,1)$ entries of the matrix $S(z)\cdot D_zf\cdot R(z)^{-1}$
correspondingly.
 A direct computation shows that
$A(z)$, $B(z)$ are linear combinations of $\alpha_i,\epsilon_i$ ($1\leq
i\leq 4$), $\cos\theta$, $\sin\theta$, all of which are Lipschitz
continuous on $\gamma_n$, from (\ref{henon}), property (F3) of $\mathcal F^s$ in
 \cite[Sect.2.2]{SenTak11}
and the
$C^2(b)$-property of $\gamma_n$. Hence $A(z)$, $B(z)$ are
Lipschitz continuous on $\gamma_n$ as well, which implies 
\begin{equation}\label{I12}  I_1\leq C|x-y|\cdot\|w_p(\zeta)\|
\quad\text{and}\quad I_2\leq (Cb)^{p-1}|x-y|
.\end{equation}

\noindent{\it Estimate of $I_3$.}
We start with an elementary geometric reasoning.
Let $v_1$, $v_2$ be nonzero vectors in $\mathbb R^2$ such that $\|v_1\|\leq\|v_2\|$, $\theta\ll1$ (See figure 3). We have
\begin{align*}
\|v_2-v_1\|&<|\|v_2\|-\|v_1\|\cos\theta|+\|v_1\|\sin\theta\\
&=\cos\theta|\|v_2\|-\|v_1\||+(1-\cos\theta)\|v_2\|+\|v_1\|\sin\theta\\
&\leq |\|v_2\|-\|v_1\||+2\theta\|v_2\|.
\end{align*}

Without loss of generality we may assume 
$\|D_yf^{p-1}e^s(y)\|\geq \|D_xf^{p-1}e^s(x)\|.$
The angle between the two vectors involved in $I_3$ is small.
The fact that $|B(y)|\leq C$
and the above reasoning show
 \begin{equation}\label{I3'}I_3\leq C\|D_yf^{p-1}e^s(y)\|\left(
\left|\frac{\|D_xf^{p-1}e^s(x)\|}{\|D_yf^{p-1}e^s(y)\|}-1\right|+3\|e^s(f^px)-e^s(f^py)\|\right).\end{equation} 

To estimate the first term in the parenthesis of \eqref{I3'} we argue as follows.
Let $J^s(z)=\|D_zfe^s(z)\|$. The invariance of the
stable foliation $\mathcal F^s$ gives
\begin{equation}\label{i3eq}\log\frac{\|D_xf^{p-1}e^s(x)\|}
{\|D_yf^{p-1}e^s(y)\|}\leq \sum_{i=1}^{p-1}\log\frac{J^s(f^ix)}
{J^s(f^iy)}.\end{equation} 
Let ${e^s}^\bot(z)$ denote any unit vector orthogonal to $e^s(z)$,
 $\theta(z)=\angle(D_zfe^s(z),D_zf{e^s}^\bot(z))$,
and let ${J^s}^\bot(z)=\|D_zf{e^s}^\bot(z)\|$. 
Then  ${e^s}^\bot$ and $\theta$ are Lipschitz continuous, 
$\theta\approx\pi/2$ and ${J^s}^\bot>2$.
Hence $\log {J^s}^\bot$ and $\sin\theta$ are Lipschitz continuous,
with Lipschitz constants independent of $b$.
Since\footnote{Here we use the fact that the Jacobian of the H\'enon map is constant equal to $b$.
Essentially the same argument remains to hold for H\'enon-like maps
for which there exists $C>0$ independent of $b$ such that $\|D\log|\det Df|\|\leq C$
(c.f. \cite{MorVia93}). Therefore our main theorems hold for H\'enon-like maps
satisfying this assumption.}
$J^s(f^ix) {J^s}^\bot(f^ix)\sin\theta(f^ix)=|\det D_{f^ix}f|=b,$
for $1\leq i<p$ we have
\[\log\frac{J^s(f^ix)} {J^s(f^iy)}
=\log\frac{{J^s}^\bot(f^iy)} {{J^s}^\bot(f^ix)}
+\log\frac{\sin\theta(f^iy)}{\sin\theta(f^ix)}\leq C|f^ix-f^iy|.\]

\begin{sublemma}\label{distance}
$\sum_{i=1}^{p-1}|f^ix-f^iy| \leq
C|f^px-f^py|+\frac{C}{\sqrt{b}}\frac{|x-y|}{d(\zeta,\gamma_n)}.$
\end{sublemma}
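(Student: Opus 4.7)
The plan is to control the sum by the single term at the end of the bound period, using the geometric growth of tangent vectors along $\gamma_n$ given by the bound-period derivative estimates of \cite[Proposition 2.5]{SenTak11}. Combined with the bounded distortion of $f^i|\gamma_n$ for $1\leq i\leq p$ proved in \cite[Sect.\ 2.4]{SenTak11}, one obtains
\[
|f^ix-f^iy|\,\asymp\,\|D_xf^i|E^u_x\|\cdot|x-y|\qquad\text{for } 1\leq i\leq p,
\]
together with the geometric growth $\|D_xf^{i+1}|E^u_x\|\geq\lambda\,\|D_xf^i|E^u_x\|$ where $\lambda\approx\sqrt{4-2\varepsilon}>1$. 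Reverse-summing the resulting geometric series then yields the intermediate bound
\[
\sum_{i=1}^{p-1}|f^ix-f^iy|\,\leq\,C\,|f^{p-1}x-f^{p-1}y|.
\]

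To convert this intermediate bound into the desired inequality I pass from $|f^{p-1}x-f^{p-1}y|$ to $|f^px-f^py|$ via the decomposition $D_{f^{p-1}z}f\cdot e^u=A\binom{1}{0}+B\,e^s$ from \eqref{quadratic}. In the generic case where $|A|$ at the final step is bounded below by a multiple of $\sqrt b$, the unstable component dominates and $|f^{p-1}x-f^{p-1}y|\leq C|f^px-f^py|$, which yields the first term of the sublemma. In the degenerate case $|A|\ll\sqrt b$, where the quadratic fold annihilates the expansion at the final step, $|f^px-f^py|$ no longer controls $|f^{p-1}x-f^{p-1}y|$; instead, I forward-propagate the initial-step estimate $|fx-fy|\asymp d(\zeta,\gamma_n)|x-y|$ coming from \eqref{quadratic} through the bound period, using the exponential growth rate $\lambda$. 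Balancing this with the $\sqrt b$-degeneracy at the final step produces the correction $\frac{C}{\sqrt b}\frac{|x-y|}{d(\zeta,\gamma_n)}$.

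The main obstacle is handling the degenerate case above, where the unstable coefficient $A$ at the final step collapses due to the quadratic fold, so that the stable-direction contamination of size $\sqrt b$ cannot be absorbed into $|f^px-f^py|$. This dichotomy between a generic and a degenerate final step is precisely why the sublemma involves two distinct terms rather than a single bound in $|f^px-f^py|$, and the bookkeeping in the degenerate case is what forces the explicit appearance of both $\sqrt b$ and $d(\zeta,\gamma_n)^{-1}$ in the correction.
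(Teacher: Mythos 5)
Your first step contains a genuine gap. The per-step growth $\|D_xf^{i+1}|E^u_x\|\geq\lambda\,\|D_xf^i|E^u_x\|$ along the bound period is not true, and it is not what \cite[Proposition 2.5]{SenTak11} provides: that proposition gives only the cumulative estimate $\|D_zf^{p}|E^u_z\|\geq(4-2\varepsilon)^{p/2}$ at the \emph{end} of the bound period. Indeed, by \eqref{quadratic} one has $D_zf\,e^u(z)=A(z)\left(\begin{smallmatrix}1\\0\end{smallmatrix}\right)+B(z)e^s(fz)$ with $|A(z)|\approx|\zeta-z|$ and $|B(z)|\leq C\sqrt b$; when $d(\zeta,\gamma_n)\ll\sqrt b$ (which occurs for the pieces $\gamma_n$ with $n$ large, precisely the delicate regime) one gets $\|D_xf|E^u_x\|\approx\sqrt b$ while $\|D_xf^{2}|E^u_x\|\approx 4\,d(\zeta,\gamma_n)+O(b^{3/2})\ll\sqrt b$, so the derivative, and likewise $|f^ix-f^iy|$, can \emph{decrease} over a range of initial $i$: the first image $f[x,y]$ is dominated by its $e^s$-component, of size up to $C\sqrt b\,|x-y|$, which later iterates contract, while the horizontal component starts only at size $\approx d(\zeta,\gamma_n)|x-y|$ and overtakes it only gradually. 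For the same reason your "initial-step estimate" $|fx-fy|\asymp d(\zeta,\gamma_n)|x-y|$ is wrong: \eqref{quadra} gives this only for the $\xi$-component, and $|fx-fy|$ can be of order $\sqrt b\,|x-y|\gg d(\zeta,\gamma_n)|x-y|$. Consequently the reverse geometric summation to $C|f^{p-1}x-f^{p-1}y|$ is unjustified as you argue it (it can be rescued, but only by separating the two components and invoking the end-of-bound-period expansion to show $d(\zeta,\gamma_n)\|w_{p-2}(\zeta)\|$ is bounded below, which is nowhere in your argument).

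Your second step then misplaces the quadratic degeneracy. The splitting \eqref{quadratic} with $|A|\approx|\zeta-z|$ is an estimate at points $z\in\gamma_n$, i.e.\ it governs the \emph{first} application of $f$; the point $f^{p-1}z$ is bound to the critical orbit and stays away from the critical region, where horizontal vectors expand uniformly, so $|f^{p-1}x-f^{p-1}y|\leq C|f^px-f^py|$ holds with no case distinction---there is no "degenerate final step", and the "balancing" you invoke there is not an argument producing $\frac{C}{\sqrt b}\frac{|x-y|}{d(\zeta,\gamma_n)}$. In the paper the two terms arise differently: displacements are written in coordinates $(\xi,\eta)$ adapted to $\left(\begin{smallmatrix}1\\0\end{smallmatrix}\right)$ and $e^s(f\zeta)$; by \eqref{quadra} the first step yields $|\xi(fx)-\xi(fy)|\geq Cd(\zeta,\gamma_n)|x-y|$ and $|\eta(fx)-\eta(fy)|\leq C\sqrt b\,|x-y|$; the $\xi$-components expand at rate $\geq\sigma_1$ during the bound period, so their sum is dominated by $C|f^px-f^py|$, while the $\eta$-components contract at rate $(Cb)^{1/2}$ and are bounded, term by term, by $\frac{(Cb)^{(i-1)/2}}{\sqrt b}\frac{|x-y|}{d(\zeta,\gamma_n)}$, whose sum gives the second term. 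Thus the correction term encodes the stable-direction component created at the first step near $\zeta$, not a collapse of expansion at the last step, and your proposal as written does not prove the sublemma.
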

\begin{proof}
We introduce a new coordinate $(\xi,\eta)$ as follows:
for a point $z=(z_1,z_2)\in\mathbb R^2$ let
$$(z_1,z_2)=(\xi(z),0)+(\eta(z)e_1^s(f\zeta),\eta(z)e_2^s(f\zeta)).$$
Note that there exist $C_1>C_2>0$ such that
$$C_2\leq\frac{|z-z'|}{|\xi(z)-\xi(z')|+|\eta(z)-\eta(z')|}\leq C_1.$$

Parametrize $\gamma_n$ by arc length $s$ and let 
$\hat\gamma(s)=f(\gamma_n(s))$.
Let $x=\gamma_n(s_1)$, $y=\gamma_n(s_2)$, and assume $s_1<s_2$
without loss of generality.
For $s\in[s_1,s_2]$ define two numbers $\tilde A(s)$, $\tilde B(s)$ by
$\frac{d\hat\gamma}{dt}|_{t=s}=\tilde A(s)\left(\begin{smallmatrix}1\\0\end{smallmatrix}\right)+
\tilde B(s)e^s(f\zeta).$
\eqref{quadratic} and $|e^s(f\zeta)-e^s(fz)|\leq C|\zeta-z|$ together imply
$|\tilde A(s)|\approx|\zeta-\gamma_n(s)|$ and
$|\tilde B(s)|\leq C\sqrt{b}$.
Since $|\zeta-\gamma_n(s)|\geq d(\zeta,\gamma_n)$ we have
$|\xi(\gamma(s_1))-\xi(\gamma(s_2))|=|\int_{s_1}^{s_2}\tilde A(s)ds|\geq 
|s_1-s_2|Cd(\zeta,\gamma_n)$,
and
$|\eta(\gamma(s_1))-\eta(\gamma(s_2))|=|\int_{s_1}^{s_2}\tilde B(\gamma(s))ds|
\leq C\sqrt{b}|s_1-s_2|.$
Namely,
\begin{equation}\label{quadra}|\xi(fx)-\xi(fy)|\geq Cd(\zeta,\gamma_n)|x-y|\ \ \text{and} \ \
|\eta(fx)-\eta(fy)|\leq C\sqrt{b}|x-y|.\end{equation}
For $1\leq i\leq p-1$ we have
$|\xi(f^ix)-\xi(f^iy)|\leq \sigma_1^{-(i-p)}|\xi(f^px)-\xi(f^py)|$, and
\begin{align*}
|\eta(f^ix)-\eta(f^iy)|&\leq (Cb)^{\frac{i-1}{2}}|\eta(fx)-\eta(fy)|\leq
\frac{(Cb)^{\frac{i-1}{2}}}{\sqrt{b}}\frac{|\xi(fx)-\xi(fy)|}{d(\zeta,\gamma_n)}
\leq \frac{(Cb)^{\frac{i-1}{2}}}{\sqrt{b}}\frac{|x-y|}{d(\zeta,\gamma_n)},\end{align*}
where the second inequality follows from \eqref{quadra}. 
Summing these two inequalities over all $1\leq i\leq p-1$ yields the desired one.
\end{proof}
Sublemma \ref{distance} implies that the right-hand-side of \eqref{i3eq} is bounded by a constant $C>0$ independent of $b$.
Since there exists $\rho=\rho(C)>0$ such that
$e^\psi\leq1+\rho\psi$ for $0\leq\psi\leq C$, we have
\begin{equation}\label{i4eq}
\frac{\|D_xf^{p-1}e^s(x)\|} {\|D_yf^{p-1}e^s(y)\|}-1
 \leq      \rho\sum_{i=1}^{p-1}\log\frac{J^s(f^ix)} {J^s(f^iy)}\leq    \rho C\sum_{i=1}^{p-1}|f^ix-f^iy|.\end{equation} 
 
 We are in position to finish the estimate of $I_3$.
We have 
$\|D_yf^{p-1}e^s(y)\|\leq Cb$, and the second
term in the parenthesis in \eqref{I3'} 
is $\leq C|f^px-f^py|$. Then,
combining  Sublemma \ref{distance}, \eqref{i4eq} and 
plugging the result  into \eqref{I3'} yields
\begin{equation}\label{I3}I_3\leq
Cb|f^px-f^py|+C\frac{|x-y|}{d(\zeta,\gamma_n)}.\end{equation}

\begin{figure}
\begin{center}
\includegraphics[height=3cm,width=10cm]
{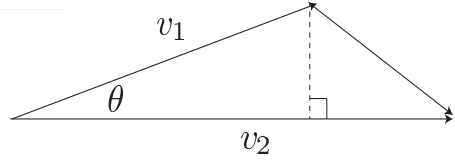}
\caption{$\|v_1\|\leq\|v_2\|$, $\theta\ll1$}
\end{center}
\end{figure}

\noindent{\it Estimate of $I_4$.} 
In the same way as in the proof of \eqref{I3'} we have
\[
I_4\leq
|A(y)|\cdot\Vert w_{p}(\zeta)\Vert\left(
\left|\frac{\|D_{fx}f^{p-1}\left(\begin{smallmatrix}1\\0\end{smallmatrix}\right)
\|}{\|D_{fy}f^{p-1}\left(\begin{smallmatrix}1\\0\end{smallmatrix}\right)
\|}-1\right|+2 \angle(D_{fx}f^{p-1}\left(\begin{smallmatrix}1\\0\end{smallmatrix}\right),
D_{fy}f^{p-1}\left(\begin{smallmatrix}1\\0\end{smallmatrix}\right))\right).
\]
From the distortion estimate in the proof of 
 \cite[Lemma 2.7]{SenTak11} and Sublemma \ref{distance},
 the first term in the parenthesis is $\leq C|f^px-f^py|$.
To estimate the second term in the parenthesis, take a point $r$ so that the leaf
$\mathcal F^s(fy)$ intersects the horizontal through $fx$
at $fr$. 
By the angle estimate in \cite[Claim 5.3]{WanYou01},
$$\angle (D_{fy}f^{p-1}\left(\begin{smallmatrix}1\\0\end{smallmatrix}\right),
D_{fr}f^{p-1}\left(\begin{smallmatrix}1\\0\end{smallmatrix}\right))\leq (Cb)^{p-1}|fy-fr|\leq(Cb)^{p-1}
|x-y|\leq (Cb)^{p-1}|f^px-f^py|.$$
By the $C^2(b)$-property and the definition of $r$,
$$\angle (D_{fx}f^{p-1}\left(\begin{smallmatrix}1\\0\end{smallmatrix}\right),
D_{fr}f^{p-1}\left(\begin{smallmatrix}1\\0\end{smallmatrix}\right))\leq\sqrt{b}|f^{p}x-f^{p}r|\leq
C\sqrt{b}|f^{p}x-f^py|.$$
Hence we obtain
\begin{equation*}\angle(D_{fx}f^{p-1}\left(\begin{smallmatrix}1\\0\end{smallmatrix}\right),
D_{fy}f^{p-1}\left(\begin{smallmatrix}1\\0\end{smallmatrix}\right))\leq C\sqrt{b}|f^{p}x-f^py|.\end{equation*}
Additionally \eqref{quadratic} yields
$$|A(y)|\leq C|\zeta-y|\leq
C(d(\zeta,\gamma_n)+\ell(\gamma_n))
$$
where $d(\cdot,\cdot)$ denotes the minimal distance apart. Finally,
from Sublemma \ref{length} below we get 
\begin{equation}\label{I4}
I_4\leq Cd(\zeta,\gamma_n)\|w_p(\zeta)\|\cdot |f^px-f^py|.
\end{equation}
\begin{sublemma}\label{length}
$\ell(\gamma_n)\leq Cd(\zeta,\gamma_n).$
\end{sublemma}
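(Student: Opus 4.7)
The plan is to prove that $d_n:=d(\zeta,\gamma_n)$ decays geometrically with bounded ratio in $n$, which immediately gives
\[
\ell(\gamma_n)=d_{n-1}-d_n\leq C\,d_n=C\,d(\zeta,\gamma_n).
\]
I would obtain this decay by pinning $d_n$ to the single quantity $D_n:=\|D_{f\zeta}f^{n-1}|_{T_{f\zeta}f\gamma}\|$ via the quadratic tangency at the critical point.

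Let $z_n\in\gamma_n$ denote the endpoint of $\gamma_n$ closer to $\zeta$, so that $d_n$ coincides up to a bounded factor with the arclength on $\gamma$ from $\zeta$ to $z_n$. Since $\zeta$ is a critical point of $\gamma$ in the sense of \cite[Sect.~2.2]{SenTak11} (its tangent direction coincides with the stable direction), the image curve $f\gamma$ has a parabolic vertex at $f\zeta$, so $|fz-f\zeta|\approx|z-\zeta|^2$ along $f\gamma$ for $z\in\gamma$ near $\zeta$. Moreover, by Lemma~\ref{recommend}(a), $f^iz_n\in\overline{R\setminus\Theta}$ for $1\leq i\leq n-1$, where \cite[Lemma~2.3]{SenTak11} supplies uniform hyperbolicity with rate in $[\sigma_1,\sigma_2]$ and bounded distortion. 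Propagating the quadratic estimate forward along this orbit yields the key relation
\[
|f^nz_n-f^n\zeta|\;\asymp\; d_n^{\,2}\cdot D_n.
\]

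Next I would observe that the left-hand side is bounded above and below by absolute constants: since $f\zeta\in\alpha_0^+\subset W^s(P)$ one has $f^n\zeta\to P$, whereas $f^nz_n\in\alpha_n^\pm$ remains in a fixed compact region accumulating on the piece of $W^s(Q)$ through $\zeta_0$, hence bounded away from $P$. Consequently $d_n\asymp D_n^{-1/2}$. Applying the same analysis to the endpoint of $\gamma_n$ on the far side of $\zeta$ gives $d_{n-1}\asymp D_{n-1}^{-1/2}$, and since $D_n/D_{n-1}\in[\sigma_1,\sigma_2]$ we conclude $d_{n-1}/d_n\leq\sqrt{\sigma_2}\cdot(1+o(1))$, which is bounded.

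\textbf{Main obstacle.} The delicate step is establishing the quadratic-plus-expansion relation $|f^nz_n-f^n\zeta|\asymp d_n^{\,2}D_n$ with uniform constants. This requires propagating the quadratic behavior near $\zeta$ through $n-1$ hyperbolic iterates while controlling distortion along the orbit of $f\zeta$, precisely the content of the bound-period machinery from \cite{SenTak11}. Since the bound period of $z_n$ is approximately $n$, the estimate $\|D_{z_n}f^{p(z_n)}|E^u\|\geq(4-2\varepsilon)^{p(z_n)/2}$ from \cite[Proposition~2.5]{SenTak11}, already invoked in the proof of Lemma~\ref{markov}(a) above, supplies the translation between the quadratic vertex at $f\zeta$ and the derivative growth; combined with standard bounded distortion along $\{f^if\zeta\}_{0\leq i\leq n-2}$, this yields the asymptotic.
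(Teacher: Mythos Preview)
Your overall strategy---pinning $d_n:=d(\zeta,\gamma_n)$ to the growth of a derivative along the critical orbit and then using bounded successive ratios to get $\ell(\gamma_n)=d_{n-1}-d_n\leq C\,d_n$---is the right intuition, but the execution contains several concrete errors, one of which is a genuine gap.

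Two points are easily fixed. Your $D_n=\|D_{f\zeta}f^{n-1}|_{T_{f\zeta}f\gamma}\|$ is the derivative in the \emph{contracting} direction: since $\zeta$ is critical, $T_\zeta\gamma$ coincides with the most-contracted direction, hence $T_{f\zeta}f\gamma$ is tangent to the stable leaf $\mathcal F^s(f\zeta)$ and $D_n\to 0$. The quantity you want is $\|w_n(\zeta)\|=\|D_{f\zeta}f^{n-1}\left(\begin{smallmatrix}1\\0\end{smallmatrix}\right)\|$. Also, $\alpha_0^+\subset W^s(Q)$, not $W^s(P)$; and $z_n\in\alpha_n^\pm$, but $f^nz_n\in\alpha_1^-$, not $\alpha_n^\pm$.

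The genuine gap is the claim that $|f^nz_n-f^n\zeta|$ is bounded below by an absolute constant. Your argument requires $f^n\zeta$ to converge to a fixed saddle, but Sublemma~\ref{length} is stated for an \emph{arbitrary} component $\gamma$ of $\Theta\cap W^u$. For such $\gamma$ the critical point $\zeta$ is close to, but in general distinct from, $\zeta_0$; it need not lie on $W^s(Q)$, and $f^n\zeta$ has no prescribed asymptotic behaviour. Nothing then prevents $f^n\zeta$ from passing close to $\alpha_1^-$, where $f^nz_n$ sits. The same issue undermines the claim $D_n/D_{n-1}\in[\sigma_1,\sigma_2]$: the orbit of $f\zeta$ may return to $\Theta$, where one step of $Df$ is small.

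The paper avoids tracking $f^n\zeta$ altogether. It introduces shells $U_k$ around the stable leaf through $f\zeta$ at horizontal distances $D_k$, shows $D_{k_0}\asymp D_{k_0-M}$ from \cite[Lemma~2.5]{SenTak11}, and proves by contradiction that $f\gamma_n$ lies in two adjacent shells: if $f\gamma_n$ spanned all of some $U_{k_0}$, then after $k_0-M$ iterates the image would be long enough to force $f^i\gamma_n$ to cross some $\tilde\alpha_j$ for $i<n$, contradicting Lemma~\ref{recommend}(a). This argument uses only the controlled dynamics of $\gamma_n$ itself, together with the quadratic relation between shell widths and $d(\zeta,\gamma_n)$.
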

\begin{proof}
Let $M$ be a large integer such that $M\ll N$. Consider the leaf
of the stable foliation $\mathcal F^s$
through $f\zeta$ which is of the form $\mathcal F^s(f\zeta)=\{(x(y),y)\colon
|y|\leq\sqrt{b}\}$. For $k>M$ define 
$$U_k:=\left\{(x,y)\colon
D_{k}\leq |x-x(y)|< D_{k-M},|y|\leq\sqrt{b}\right\}$$
where $D_k:=C\left[\sum _{i=1}^{k}\frac{\|w_{i}(\zeta)\|^2}{\|w_{i+1}(\zeta)\|}\right]^{-1}$ for some constant $C>0$. 
Let 
$k_0:=\max\{k>M\colon U_k\cap f\gamma_n\neq\emptyset\}-1$. 
By  \cite[Lemma 2.5(a)]{SenTak11}, there exist constants $0<C_1<C_2<1/2$ such that
\begin{equation}\label{excercise}
C_1D_{k_0-M}\leq D_{k_0}\leq C_2D_{k_0-M}.\end{equation}
We prove
\begin{equation}\label{contain}f\gamma_n\subset U_{k_0}\cup U_{k_0+1}.\end{equation}
(\ref{excercise}) \eqref{contain} 
imply
$\ell(\gamma_n)\leq C\sqrt{D_{k_0-M}}\leq C\sqrt{D_{k_0}}\leq Cd(\gamma_n,\zeta)$,
and thus Sublemma \ref{length} holds. 
\medskip

It is left to prove \eqref{contain}.
If the inclusion were false, then one could choose a curve
$\delta\subset f\gamma_n\cap U_{k_0}$ with endpoints in
the two vertical boundaries of $U_{k_0}$. 
Let $x$ denote the endpoint of $f^2\gamma_n$ in
$\tilde\alpha_{n-1}$. The bounded distortion and  the second inequality in 
\cite[Lemma 2.5(b)]{SenTak11} give
$$d(\alpha_0^-,f^{k_0-M}x)\leq 2D_{k_0} \|w_{k_0-M+1}(\zeta)\|\leq
2\cdot3^{-M}D_{k_0}\|w_{k_0}(\zeta)\|\leq 3^{-M},$$ 
and
$$\ell(f^{k_0-M}\delta)\geq C(D_{k_0-M}-D_{k_0})\|w_{k_0-M}(\zeta)\|\geq C(1-
C_2)D_{k_0-M}
\|w_{k_0-M}(\zeta)\|\geq C.$$
From these
two estimates and choosing large $M$ if necessary we have that the interior of $f^{k-10}\delta$ intersects 
some $\tilde\alpha_i$. This yields a contradiction.  \end{proof}

\noindent{\it Overall estimates.} Gluing \eqref{I12} \eqref{I3} \eqref{I4}
together,
$$\Vert D_xf^pe^u(x)-D_yf^pe^u(y)\Vert\leq
C\|w_{p}(\zeta)\| \cdot|x-y|+\frac{C|x-y|}{d(\zeta,\gamma_n)} +Cd(\zeta,\gamma_n)\|w_{p}(\zeta)\|\cdot
|f^px-f^py|.$$ 
From the proof of \cite[Proposition 2.6]{SenTak11} there exists $C>0$
such that $\|D_zf^pe^u(z)\|\geq C
d(\zeta,\gamma_n)\cdot\|w_p(\zeta)\|\geq1$
for  $z=x,y$. Hence
\begin{equation}\label{recov4}
\log\frac{\|D_xf^{p}|E^u_x\|}{\|D_yf^{p}|
E^u_y\|}\leq\frac{C|x-y|}{d(\zeta,\gamma_n)}+C|f^px-f^py|.
\end{equation}
 \cite[Proposition 2.5(c)(d)]{SenTak11} and \cite[Lemma 2.4]{WanYou01} together imply that
 $f^p\gamma_n$ is a $C^2(b)$-curve. By the uniform hyperbolicity outside of $\Theta$,
$f^{p+1}\gamma_n,\ldots,f^{n-1}\gamma_n$ are
$C^2(b)$ as well and we have 
\begin{equation}\label{recov5}|f^px-f^py|\leq  |f^nx-f^ny|\ \ \text{and} \ \
\log\frac{\|D_{f^px}f^{n-p}|E^u_{f^px}\|}{\|D_{f^py}f^{n-p}|E^u_{f^py}\|}\leq C|f^nx-f^ny|.
\end{equation} 
\eqref{recov4} \eqref{recov5} and Sublemma \ref{recov} below yield
$$\log\frac{\|D_xf^{n}|E^u_x\|}{\|D_yf^{n}|
E^u_y\|}=\log\frac{\|D_xf^{p}|E^u_x\|}{\|D_yf^{p}|
E^u_y\|}+\log\frac{\|D_{f^px}f^{n-p}|E^u_{f^px}\|}{\|D_{f^py}f^{n-p}|E^u_{f^py}\|}\leq C|f^nx-f^ny|,$$
which proves (b). 

\begin{sublemma}\label{recov}
$\frac{|x-y|}{d(\zeta,\gamma_n)}\leq C|f^nx-f^ny|$.
\end{sublemma}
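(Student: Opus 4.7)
The aim is a uniform pointwise lower bound on $\|D_z f^n|E^u_z\|$ for $z\in\gamma_n$ which, once integrated along the $C^2(b)$-curve $\gamma_n$, will yield the estimate. I would split the iteration at the bound period $p:=\max\{p(z):z\in\gamma_n\}$ already fixed in the preceding argument, and treat the bound period $[0,p]$ and the free period $[p,n]$ separately.

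On $[0,p]$ the quadratic recovery of \cite[Proposition 2.5]{SenTak11}, already used in Lemma \ref{markov}(a), gives $\|D_zf^p|E^u_z\|\geq C|\zeta-z|\cdot\|w_p(\zeta)\|\geq Cd(\zeta,\gamma_n)\|w_p(\zeta)\|$ uniformly in $z\in\gamma_n$. On $[p,n]$ Lemma \ref{recommend}(a) places $f^i\gamma_n\subset\overline{R\setminus\Theta}$ for $p\le i\le n-1$, so the uniform hyperbolicity of \cite[Lemma 2.3]{SenTak11} yields $\|D_{f^pz}f^{n-p}|E^u_{f^pz}\|\geq \sigma_1^{n-p}$. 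Composition gives the uniform pointwise bound $\|D_zf^n|E^u_z\|\geq Cd(\zeta,\gamma_n)\|w_p(\zeta)\|\sigma_1^{n-p}$ on $\gamma_n$. Since $f^n\gamma_n$ is a $C^2(b)$-curve, Euclidean distance and arc-length on it are comparable, and integration from $x$ to $y$ along $\gamma_n$ yields
$$|f^nx-f^ny|\geq C\,d(\zeta,\gamma_n)\|w_p(\zeta)\|\sigma_1^{n-p}\,|x-y|.$$

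The sublemma therefore reduces to the purely geometric assertion $d(\zeta,\gamma_n)^2\|w_p(\zeta)\|\sigma_1^{n-p}\geq c_0$ for some $c_0>0$ independent of $n$, which I expect to be the main obstacle. To extract it I would combine the explicit lower bound $d(\zeta,\gamma_n)\geq C\sqrt{D_{k_0}}$ produced in the proof of Sublemma \ref{length} with the defining relation of the bound period, namely that $d(\zeta,\gamma_n)\|w_p(\zeta)\|$ is bounded below by a positive absolute constant (the quantitative formulation of quadratic recovery in \cite[Sect.~2.3]{SenTak11}). Together with the growth estimate $\|w_k(\zeta)\|\geq(4-2\varepsilon)^{k/2}$ of \cite[Proposition 2.5]{SenTak11} and the inequality $k_0\leq n$, these inputs should give the required lower bound on $d(\zeta,\gamma_n)^2\|w_p(\zeta)\|\sigma_1^{n-p}$ and complete the proof.
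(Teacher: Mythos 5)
Your first step (the pointwise bound $\|D_zf^n|E^u_z\|\geq Cd(\zeta,\gamma_n)\|w_p(\zeta)\|\sigma_1^{n-p}$ and its integration along $\gamma_n$) is fine, but the inequality you reduce the sublemma to, namely $d(\zeta,\gamma_n)^2\|w_p(\zeta)\|\sigma_1^{n-p}\geq c_0$, is exactly where the argument breaks down: it is not provable from your listed ingredients, and under the bound-period conventions of \cite{SenTak11} it is in fact false. During the free segment $[p,n]$ the orbit stays in $R\setminus\Theta$ close to $Q$, where the true per-step expansion is close to $4$, while you only retain the a priori rate $\sigma_1=2-\varepsilon$; heuristically $\|w_p(\zeta)\|\approx\lambda^p$, the return condition gives $d(\zeta,\gamma_n)^2\approx\lambda^{-n}$ with $\lambda\approx4$, so $d(\zeta,\gamma_n)^2\|w_p(\zeta)\|\sigma_1^{n-p}\approx(\sigma_1/\lambda)^{n-p}\approx2^{-(n-p)}$, and with the $e^{-\beta j}$-type definition of the bound period one has $n-p\gtrsim\beta n\to\infty$, so this quantity decays. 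Equivalently, after using $d(\zeta,\gamma_n)\|w_p(\zeta)\|\geq c$ you would still need $d(\zeta,\gamma_n)\gtrsim\sigma_1^{-(n-p)}$, and neither $d(\zeta,\gamma_n)\geq C\sqrt{D_{k_0}}$ (which only returns $d\gtrsim\lambda^{-k_0/2}$ with $k_0\approx n$), nor $\|w_k\|\geq(4-2\varepsilon)^{k/2}$, nor $k_0\leq n$ can supply it: the deficit is precisely the factor $(\lambda/\sigma_1)^{n-p}$ you discarded on the free piece. So the "purely geometric assertion" you flag as the main obstacle is indeed the obstacle, and it cannot be closed along the route you propose.

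The paper avoids the a priori constants altogether at this stage. It transfers $|x-y|$ to the horizontal displacement via \eqref{quadra}, $|\xi(fx)-\xi(fy)|\geq Cd(\zeta,\gamma_n)|x-y|$, and then uses bounded distortion on $f\gamma_n,\dots,f^{n-1}\gamma_n\subset\overline{R\setminus\Theta}$ to bring in the \emph{actual} accumulated expansion $\|D_\theta f^{n-1}\left(\begin{smallmatrix}1\\0\end{smallmatrix}\right)\|$ at a mean-value point $\theta\in f\gamma_n$ (this is \eqref{reco1}). The needed lower bound $d(\zeta,\gamma_n)^2\|D_\theta f^{n-1}\left(\begin{smallmatrix}1\\0\end{smallmatrix}\right)\|\geq c$ then comes from two length facts rather than from hyperbolicity constants: $\ell(f^n\gamma_n)\leq C\ell(\gamma_n)\,d(\zeta,\gamma_n)\,\|D_\theta f^{n-1}\left(\begin{smallmatrix}1\\0\end{smallmatrix}\right)\|$ (mean value plus the quadratic estimate), $\ell(\gamma_n)\leq Cd(\zeta,\gamma_n)$ (Sublemma \ref{length}, used as a whole, not just the intermediate bound $d\geq C\sqrt{D_{k_0}}$), and $\ell(f^n\gamma_n)$ bounded below because $f^n\gamma_n$ is a $C^2(b)$-curve crossing $\Theta$ with endpoints in $\alpha_1^\pm$. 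This is the mechanism missing from your proposal; with it, no relation between $n$, $p$ and $d(\zeta,\gamma_n)$ is ever needed.
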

\begin{proof}

By the bounded
distortion outside of $\Theta$, there exists
$\theta\in f\gamma_n$ such that
\begin{equation}\label{reco1}|\xi(fx)-\xi(fy)|\cdot\|D_{\theta}f^{n-1}\left(\begin{smallmatrix}1\\0\end{smallmatrix}\right)\| \leq C |f^nx-f^ny|.\end{equation}
The bounded distortion outside of $\Theta$ and 
 the quadratic behavior near $\zeta$ as in
\eqref{quadratic} imply
$$\ell(\gamma_n)
d(\zeta,\gamma_n)\|D_{\theta}f^{n-1}\left(\begin{smallmatrix}1\\0\end{smallmatrix}\right)\|
\geq C\ell(f^n\gamma_n).$$
Hence there exists $C>0$ such that
\begin{equation}\label{reco2}
d(\zeta,\gamma_n)^2\|D_{\theta}f^{n-1}\left(\begin{smallmatrix}1\\0\end{smallmatrix}\right)\|\geq C\ell(\gamma_n)
d(\zeta,\gamma_n)\|D_{\theta}f^{n-1}\left(\begin{smallmatrix}1\\0\end{smallmatrix}\right)\|\geq C\ell(f^n\gamma_n)>C.\end{equation}
The first inequality follows from Sublemma \ref{length}, and
the last inequality is because $f^n\gamma_n$ is a $C^2(b)$-curve with endpoints
in $\alpha_1^\pm$.
Using the first inequality and \eqref{quadra} and then
\eqref{reco1} \eqref{reco2} yield
\begin{equation*}\label{recoveq6}
\frac{|x-y|}{d(\zeta,\gamma_n)}\leq\frac{C|\xi(fx)-\xi(fy)|}{d(\zeta,\gamma_n)^2}
\leq \frac{C|f^nx-f^ny|}{d(\zeta,\gamma_n)^2
\|D_{\theta}f^{n-1}\left(\begin{smallmatrix}1\\0\end{smallmatrix}\right)\|}
\leq C |f^nx-f^ny|.\qedhere
\end{equation*}
\end{proof}

\subsection*{A2. Proof of Lemma \ref{leaf}.}
Set $\kappa=5^{-(1+\xi)N}$.
\begin{sublemma}\label{appendix2}
For any $z\in\Omega_\infty$ and every $n\geq1$, $\|D_zf^n|E^u_z\|\geq\kappa^{n}$. 
\end{sublemma}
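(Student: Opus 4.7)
The plan is to use the return structure of the orbit to the critical region $\Theta$, combining the expansion estimates of Lemma \ref{markov} on each complete bound period with the quantitative slow recurrence encoded by $\Omega_\infty$. For $z\in\Omega_\infty$, inductively set $t_0:=0$ and, given $t_i$, let $g_i\ge 0$ denote the order of the gap of $\mathcal W^s$ containing $f^{t_i}z$ and set $t_{i+1}:=\min\{t>t_i+g_i:f^tz\in\Theta\}$. The hypothesis $f^nz\in R\setminus\Theta_n$ for every $n\ge 0$ yields two itinerary inequalities: (i) the slow-recurrence spacing $t_{i+1}-t_i\ge(1+\xi)g_i+N$, already present in the proof of Claim \ref{q}; and (ii) the shallow-entry inequality $g_i\le\xi t_i+N$, obtained by matching the depth of a gap of $\mathcal W^s$ of order $g_i$ inside $\Theta$ (which sits at distance of order $\sigma_1^{-g_i}$ from the critical line) against the constraint $f^{t_i}z\notin\Theta_{t_i}$ (which forces distance at least $\sigma_1^{-(\xi t_i+N)}$).

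Given $n\ge 1$, let $k$ be the largest index with $t_k\le n$ and set $m:=n-t_k\in[0,t_{k+1}-t_k)$. Lemma \ref{markov}(a) applied to the gap at each return gives $\|D_{f^{t_i}z}f^{g_i}|E^u\|\ge\sigma_1^{g_i}$, while on the free interval $[t_i+g_i,t_{i+1}]$ the orbit lies in $\overline{R\setminus\Theta}$ where uniform expansion by $\sigma_1$ per iterate holds. Compounding these contributions over the $k$ complete cycles yields
\[
\|D_zf^{t_k}|E^u_z\|\ge\sigma_1^{t_k}.
\]
For the partial final segment, when $m>g_k$ the same argument produces $\|D_{f^{t_k}z}f^m|E^u\|\ge\sigma_1^m\ge 1$; when $m\le g_k$ we use the decomposition
\[
D_{f^{t_k}z}f^{g_k}=D_{f^nz}f^{g_k-m}\circ D_{f^{t_k}z}f^m
\]
together with the crude bound $\|Df|E^u\|\le 5$ valid on $R$ and with $\|D_{f^{t_k}z}f^{g_k}|E^u\|\ge\sigma_1^{g_k}\ge 1$ to obtain $\|D_{f^{t_k}z}f^m|E^u\|\ge 5^{-g_k}$. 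Either way the partial contribution is at least $5^{-g_k}$.

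Multiplying and using $\sigma_1\ge 1$, we deduce $\|D_zf^n|E^u_z\|\ge\sigma_1^{t_k}\cdot 5^{-g_k}\ge 5^{-g_k}$. The shallow-entry inequality together with $t_k\le n$ gives $g_k\le\xi n+N$, so $\|D_zf^n|E^u_z\|\ge 5^{-(\xi n+N)}$. The elementary identity
\[
(1+\xi)Nn-(\xi n+N)=N(n-1)+\xi n(N-1)\ge 0\qquad(n\ge 1,\ N\ge 1)
\]
then implies $5^{-(\xi n+N)}\ge 5^{-(1+\xi)Nn}=\kappa^n$, completing the proof.

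The principal difficulty is the derivation of the two itinerary inequalities, and in particular the shallow-entry bound (ii), which translates the single hypothesis $f^{t_i}z\notin\Theta_{t_i}$ into a quantitative constraint on the bound-period index $g_i$ via the spatial location of the gap in $\mathcal W^s$. Once (i) and (ii) are in hand, the remainder reduces to a routine orbit decomposition: the crude pointwise bound $\|Df|E^u\|\le 5$ absorbs the possible temporary loss of expansion inside a long bound period, and the specific constant $\kappa=5^{-(1+\xi)N}$ is chosen exactly so that this trade-off closes.
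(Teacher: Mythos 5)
Your overall skeleton is the right one (it is essentially the paper's: decompose the orbit, absorb the incomplete last segment with the crude bound $\|Df\|<5$, use slow recurrence from \eqref{sr} to bound the length of that segment by $\xi n+N$, and finish with $\xi n+N\leq(1+\xi)Nn$), but the step you yourself flag as the main difficulty --- the ``shallow-entry inequality'' $g_i\leq\xi t_i+N$ --- is a genuine gap, and I do not believe it is true as stated. Your justification is that a gap of $\mathcal W^s$ of order $g$ ``sits at distance of order $\sigma_1^{-g}$ from the critical line''. That is not how the gaps are organized: every gap of order $g\geq1$ lies over a component of $\Omega_{g-1}\setminus\Omega_g\subset\Omega_0=\overline{\gamma^u(\zeta_0)\setminus\Theta_0}$, hence \emph{outside} $\Theta_0$, at distance from the critical line comparable to the (fixed) width of $\Theta_0$, regardless of $g$; the order of a gap records the \emph{future} time at which its image folds into $\Theta_g$, not its present position. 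Consequently the slow-recurrence condition at time $t_i$ does not bound $g_i$: all it forces about the gap's evolution is $f^{t_i+g_i}z\notin\Theta_{t_i+g_i}$, which is perfectly compatible with $f^{t_i+g_i}z\in\Theta_{g_i}$ (where the gap's image lies) for arbitrarily large $g_i$, since $\Theta_{t_i+g_i}\subsetneq\Theta_{g_i}$. So returns of points of $\Omega_\infty$ can land in gaps of order far exceeding $\xi t_i+N$, and your final estimate $5^{-g_k}\geq\kappa^n$ collapses. (Two smaller inaccuracies in the same spirit: Lemma \ref{markov}(a) does not give $\|D_{f^{t_i}z}f^{g_i}|E^u\|\geq\sigma_1^{g_i}$ over the gap order --- it gives expansion over the return-time partition elements $\gamma_r$ --- and there is no uniform per-iterate expansion by $\sigma_1$ on $\overline{R\setminus\Theta}$; expansion is only recovered over complete free excursions ending at a return.)

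The quantity that slow recurrence \emph{does} control is the depth of the return point, i.e.\ the index $r$ of the curves $\alpha_r^{\pm},\alpha_{r+1}^{\pm}$ bracketing $f^{t_i}z$, equivalently the bound period of that return: as in the proof of Claim \ref{q}, $f^{t_i}z\notin\Theta_{t_i}$ forces $r<\xi t_i+N$. This is exactly how the paper argues: it introduces the bound/free structure of \cite{SenTak11} with $\Theta_0$ as critical neighbourhood; if $f^nz$ is free the alternating expansion estimates give the claim directly, and if $f^nz$ is bound to a return at time $m<n$ with bound period $p$, then $\|D_zf^n|E^u_z\|\geq 5^{-(m+p-n)}\|D_zf^{m+p}|E^u_z\|>5^{-p}$ and $z\in\Omega_\infty$ gives $p\leq\xi m+N\leq\xi n+N$, whence $5^{-\xi n-N}\geq\kappa^n$. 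If you replace your gap orders $g_i$ throughout by the bound periods (depths) of the actual returns, your argument becomes the paper's proof; as written, the key inequality has no proof and the statement it relies on about gap locations is false.
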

\begin{proof}
With the terminology in \cite[Sect.2.5]{SenTak11} we introduce the bound/free 
structure on the orbit of $z$, 
using $\Theta_0$ as a critical neighborhood. If $f^{n}z$ is free, then 
the orbit $z,\ldots,f^nz$ is decomposed into alternative bound and free segments. Applying the expansion estimates in 
\cite[Lemma 2.3, Proposition 2.5]{SenTak11}
alternatively we have $\Vert D_zf^n|E^u_z\Vert\geq \kappa^{n}$. If $f^{n}z$ is bound, then
 there exists an integer $0<m<n$ such that
$f^mz\in\Theta_0$ and $m<n<m+p$, where $p$ is the bound period of $f^mz$.
Since $f^{m+p}z$ is free and $\|Df\|<5$ we have
$\Vert D_zf^n|E^u_z\Vert\geq 5^{-(m+p-n)} \Vert
D_zf^{m+p}|E^u_z\Vert> 5^{-p},$ and since $z\in\Omega_\infty$ we have $p\leq \xi m+N\leq \xi n+N$ 
and so $\|D_zf^n|E^u_z\|\geq 5^{- \xi n-N }\geq \kappa^{n}.$
\end{proof}
From Sublemma \ref{appendix2} and the results in \cite[Sect.6, Sect.7C]{MorVia93}, 
there exists a long stable leaf through $z$. The uniqueness follows from the next
sublemma with $n=0$.
\begin{sublemma}\label{appendix2'}
Let $z_1,z_2\in\Omega_\infty$ and let $\gamma^s(z_i)$ denote any long stable
leaf through $z_i$ $(i=1,2)$. 
If $f^n(\gamma^s(z_1))\cap \gamma^s(z_2)\neq\emptyset$ for some $n\geq 0$,
then $f^n\gamma^s(z_1)\subset\gamma^s(z_2)$.
\end{sublemma}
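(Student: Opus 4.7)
The plan is to reduce the statement to the case $n=0$ (uniqueness of the long stable leaf through a point of $\Omega_\infty$) and then exploit the exponential forward contraction together with the expansion transverse to the stable foliation $\mathcal F^s$. First, observe that $f^n\gamma^s(z_1)$ is still a vertical $C^2(b)$-arc, and for any pair $x'=f^nx$, $y'=f^ny$ with $x,y\in\gamma^s(z_1)$ one has $|f^kx'-f^ky'|=|f^{k+n}x-f^{k+n}y|\le (Cb)^{(k+n)/2}\le(Cb)^{k/2}$ for every $k\ge 0$. Thus $f^n\gamma^s(z_1)$ inherits the defining contraction property of a long stable leaf (with at worst the same constant), and I may argue as if $n=0$.

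Set $\alpha:=f^n\gamma^s(z_1)$ and let $w\in\alpha\cap\gamma^s(z_2)$. Parametrize both curves as graphs $\alpha=\{(\psi(y),y):y\in I\}$ and $\gamma^s(z_2)=\{(\varphi(y),y):y\in J\}$ with $|\psi'|,|\varphi'|\le C\sqrt{b}$. Since $\gamma^s(z_2)$ has endpoints on the two unstable sides of $\Theta$ and $\alpha\subset\Theta$, the interval $J$ is the full vertical extent of $\Theta$, so $I\subset J$. It therefore suffices to prove $\psi\equiv\varphi$ on $I$, which immediately gives $f^n\gamma^s(z_1)\subset\gamma^s(z_2)$.

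Suppose for contradiction that $\psi(y_0)\neq\varphi(y_0)$ for some $y_0\in I$, and set $p=(\psi(y_0),y_0)\in\alpha$ and $q=(\varphi(y_0),y_0)\in\gamma^s(z_2)$. Triangulating through $w$ and using the contraction on $\alpha$ and on $\gamma^s(z_2)$,
\begin{equation*}
|f^kp-f^kq|\le |f^kp-f^kw|+|f^kw-f^kq|\le 2(Cb)^{k/2}\quad\text{for every }k\ge 0.
\end{equation*}
On the other hand, $p-q$ is a purely horizontal vector of length $\delta:=|\psi(y_0)-\varphi(y_0)|>0$, and since the leaf of $\mathcal F^s$ through $q$ is nearly vertical, $p-q$ is uniformly transverse to $\mathcal F^s$ with horizontal component of size $\delta(1+O(\sqrt{b}))$. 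The orbit of $q$ shadows that of $z_2\in\Omega_\infty$ (contracting at rate $(Cb)^{k/2}$), hence performs $\xi$-slow returns to $\Theta_0$. The bound/free period accounting of Sublemma~\ref{appendix2} then ensures that the $\mathcal F^s$-transverse direction at $q$ is forward-expanded at rate at least $\kappa^k$ with $\kappa=5^{-(1+\xi)N}$; a Hadamard--Perron graph-transform argument as in \cite[Sect.~7C]{MorVia93} promotes this to the displacement, yielding
\begin{equation*}
|f^kp-f^kq|\ge \tfrac12\kappa^k\delta.
\end{equation*}
Because $\kappa\gg\sqrt{Cb}$, this contradicts the previous upper bound for all sufficiently large $k$, so $\psi\equiv\varphi$ on $I$.

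The main obstacle is the lower expansion estimate $\tfrac12\kappa^k\delta$ for the $\mathcal F^s$-transverse displacement of the forward orbit of $q$, as this orbit may enter the critical region $\Theta_0$ where horizontal tangent vectors are contracted during bound periods. This is precisely the situation handled in Sublemma~\ref{appendix2}: the $\xi$-slow recurrence defining $\Omega_\infty$ (inherited by $q$ via the long stable leaf) allows the bound-period estimate of \cite[Prop.~2.5]{SenTak11} to be chained with the free-iterate expansion of \cite[Lem.~2.3]{SenTak11}, producing the uniform rate $\kappa$ that overcomes the leaf contraction.
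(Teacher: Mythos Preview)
Your overall strategy---contrast the forward contraction (obtained by triangulating through the intersection point $w$) against horizontal expansion along the orbit of $z_2$---is exactly the idea the paper uses. Two points, however, deserve correction or comparison.

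First, the claim that $\alpha=f^n\gamma^s(z_1)$ is a vertical $C^2(b)$-arc is false for $n\ge 1$: applying $Df$ to a nearly vertical vector $(O(\sqrt b),1)$ yields $(O(\sqrt b),O(b))$, which is nearly \emph{horizontal}. So your graph parametrisation $\alpha=\{(\psi(y),y)\}$ need not exist. This is easily fixed (choose any $p\in\alpha\setminus\gamma^s(z_2)$ and let $q\in\gamma^s(z_2)$ be the point at the same height; the paper does precisely this without ever parametrising $\alpha$), but as written the reduction to ``$n=0$'' is misleading.

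Second, your displacement lower bound $|f^kp-f^kq|\ge\tfrac12\kappa^k\delta$ is the heart of the matter, and appealing to a Hadamard--Perron graph-transform argument leaves real work undone: one must control the segment $[p,q]$ under iteration, and since $\delta$ is fixed rather than chosen small, distortion along the iterated segment is not obviously bounded. The paper sidesteps this neatly by (i) choosing a \emph{free} time $M$ for $z_2$, so the expansion rate at $x_2\in\gamma^s(z_2)$ is $\sigma_1^M$ with $\sigma_1>1$ rather than your $\kappa<1$, and (ii) choosing the horizontal separation to be exactly $b^{M/3}$, which keeps the segment short enough (length $\le 5^Mb^{M/3}\to 0$) that bounded distortion applies directly. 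The contradiction $C\sigma_1^Mb^{M/3}\le 2(Cb)^{M/2}$ is then immediate. Your route can be made to work, but the paper's choice of free time and tuned initial separation replaces an abstract graph-transform invocation with a two-line computation.
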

\begin{proof}
Choose a large integer $M\gg n$ such that $f^Mz_2$ is free. Take $x_1\in
f^n\gamma^s(z_1)$, $x_2\in\gamma^s(z_2)$ which are connected by
a horizontal segment of length $b^{\frac{M}{3}}$. By construction,
$\|D_{x_2}f^M|E^u_{x_2}\|\geq
\sigma_1^{M}$. By the bounded distortion, 
the $f^M$-iterate of the segment is $C^2(b)$ and
$|f^Mx_1-f^Mx_2|\geq C\sigma_1^{M}|x_1-x_2|
\geq  C\sigma_1^{M}b^{\frac{M}{3}}.$
If $q\in f^n\gamma^s(z_1)\cap\gamma^s(z_2)$,
then 
$|f^Mx_1-f^Mx_2|\leq|f^Mx_1-f^Mq|+
|f^Mq-f^Mx_2|\leq 2(Cb)^{\frac{M}{2}}.$ These two estimates
are incompatible. 
\end{proof}
Lemma \ref{leaf}(a) is a consequence of Sublemma \ref{appendix2'}.
Since the stable sides of $\Theta$ are long stable leaves,
$\gamma^s(z)\subset\Theta$ follows from the disjointness in Lemma \ref{leaf}(a).
The rest of the items in the lemma follow from the
results in \cite[Sect.6, Sect.7C]{MorVia93} and \cite[Proposition 2.4]{BenVia01}.
$\hfill\qed$

\subsection*{A3. Proof of Lemma \ref{Lyap}.}
Let $\mu\in\mathcal M^e(f)$. 
Consider $x\in \Omega$ which is free and satisfies $\displaystyle{\lim_{n\to\infty}}(1/n)\log \|D_xf^n|E^u_x\|=\lambda^u(\mu)$.
The orbit $x,fx,\ldots$ is decomposed into alternative bound and free segments. Applying the expansion estimates in 
\cite[Lemma 2.3, Proposition 2.5]{SenTak11}
alternatively we have $\Vert D_xf^n|E^u_x\Vert\geq (2-\varepsilon)^{n}$
if $f^nx$ is free. This implies
$\lambda^u(\mu)\geq\log(2-\varepsilon).$
$\hfill\qed$

\subsection*{Acknowledgments}
We thank Renaud Leplaideur, Isabel Rios, Juan Rivera-Letelier, Christian Wolf and anonymous referees for very useful comments.
S.S. is partially supported by the CNPq and PRONEX, Brazil.
H.T. is partially supported by the Grant-in-Aid for Young Scientists (B) of the JSPS, Grant No.2374012 and the Keio Gijuku Academic Development Funds.
This research is partially supported by the Kyoto University Global COE Program.
We thank the Mathematics Departments of Kyoto University, the Federal University of Rio de Janeiro, the Pennsylvania State University, 
l'\'Ecole Polytechnique F\'ed\'erale de Lausanne, and IMPA for their hospitality.

\end{document}